\documentclass[11pt]{article}
\usepackage[a4paper, left=2cm, right=2cm, top=3cm, bottom=3cm]{geometry}
\usepackage[comma, round]{natbib}
\bibliographystyle{abbrvnat}

\usepackage{float}
\usepackage{amssymb}
\usepackage{amsthm}
\usepackage{amsmath}
\usepackage{titletoc}
\usepackage{mathrsfs}
\usepackage[colorlinks, allcolors=blue, linktocpage]{hyperref}
\usepackage{amsfonts}
\usepackage{graphicx}
\usepackage[linesnumbered, ruled, vlined]{algorithm2e}
\usepackage{esint}
\usepackage{bbm}
\usepackage{bm}
\usepackage{mathtools}
\usepackage[shortlabels, inline]{enumitem}
\usepackage{authblk}
\usepackage{todonotes}
\usepackage{color}


\numberwithin{equation}{section}
\numberwithin{figure}{section}

\theoremstyle{plain}
\newtheorem{thm}{Theorem}[section]
\newtheorem{cor}[thm]{Corollary}
\newtheorem{lem}[thm]{Lemma}
\newtheorem{prop}[thm]{Proposition}

\theoremstyle{definition}
\newtheorem{defn}[thm]{Definition}
\newtheorem{asmp}[thm]{Assumption}
\newtheorem{rmk}[thm]{Remark}
\newtheorem{exam}[thm]{Example}


\newcommand{\cB}{\mathcal{B}}

\newcommand{\cF}{\mathcal{F}}

\newcommand{\cH}{\mathcal{H}}

\newcommand{\cL}{\mathcal{L}}

\newcommand{\cN}{\mathcal{N}}

\newcommand{\cR}{\mathcal{R}}

\newcommand{\cX}{\mathcal{X}}
\newcommand{\cY}{\mathcal{Y}}


\newcommand{\bF}{\mathbf{F}} 
 
\newcommand{\bH}{\mathbf{H}}

\newcommand{\bP}{\mathbf{P}}


\newcommand{\bbN}{\mathbb{N}}

\newcommand{\bbR}{\mathbb{R}}

\newcommand{\bbZ}{\mathbb{Z}}


\newcommand{\scrP}{\mathscr{P}}

\newcommand{\scrU}{\mathscr{U}}



\newcommand{\frB}{\mathfrak{B}}
\newcommand{\frC}{\mathfrak{C}}
\newcommand{\frD}{\mathfrak{D}}

\newcommand{\frN}{\mathfrak{N}}


\newcommand{\1}{\mathbbm{1}}
\newcommand{\AW}{\mathcal{AW}}
\newcommand{\W}{\mathcal{W}}

\newcommand{\la}{\langle}
\newcommand{\ra}{\rangle}

\newcommand{\md}{\mathop{}\mathopen\mathrm{d}}
\newcommand{\me}{\mathop{}\mathopen\mathrm{e}}
\newcommand{\E}{E}
\renewcommand{\P}{P}

\newcommand{\Id}{\operatorname{Id}}

\newcommand{\Law}{\operatorname{Law}}
\newcommand{\supp}{\mathrm{supp}}

\renewcommand{\c}{\mathfrak{c}}
\newcommand{\bc}{\mathfrak{bc}}

\newcommand{\tr}{\operatorname{tr}}
\newcommand{\HS}{\mathrm{HS}}

\newcommand{\sync}{\mathrm{sync}}
\newcommand{\diag}{\operatorname{diag}}
\newcommand{\range}{\operatorname{range}}

\def\lb{\mathopen{}\mathclose\bgroup\left}
\def\rb{\aftergroup\egroup\right}


\title{A transfer principle for computing the adapted Wasserstein distance between stochastic processes}
\author{Yifan Jiang\thanks{Email: {\tt yifan.jiang@maths.ox.ac.uk}.} }
\author{Fang Rui Lim\thanks{Email: {\tt fang.lim@maths.ox.ac.uk}.}}
\affil{Mathematical Institute, University of Oxford}
\date{}


\begin{document}
\newgeometry{top=2cm, bottom=2cm, right=2cm, left=2cm}

\maketitle

\begin{abstract}
	We propose a transfer principle to study the adapted 2-Wasserstein distance between stochastic processes.
	First, we obtain an explicit formula for the distance between real-valued mean-square continuous Gaussian processes by introducing the causal factorization as an infinite-dimensional analogue of the Cholesky decomposition for operators on Hilbert spaces.
	We discuss the existence and uniqueness of this causal factorization and link it to the canonical representation of Gaussian processes.
	As a byproduct, we characterize mean-square continuous Gaussian Volterra processes in terms of their natural filtrations.
	Moreover, for real-valued fractional stochastic differential equations, we show that the synchronous coupling between the driving fractional noises attains the adapted Wasserstein distance under some monotonicity conditions.
	Our results cover a wide class of stochastic processes which are neither Markov processes nor semi-martingales, including fractional Brownian motions and fractional Ornstein--Uhlenbeck processes.



	\medskip
	\noindent{\em Keywords}: adapted Wasserstein distance, Gaussian process, fractional Brownian motion, fractional stochastic differential equation,  nest algebra, path-dependent HJB equation.
	\medskip
\end{abstract}

\tableofcontents
\restoregeometry
\clearpage
\newpage

\section{Introduction}
Stochastic processes, the building block of stochastic analysis, can be viewed as path-valued random variables.
From this perspective, the convergence of stochastic processes can naturally be induced by the weak convergence of their laws as probability measures on the path space.
However, this `static' viewpoint turns out to be insufficient for `dynamic' problems, especially for many key applications in mathematical finance and beyond.
In particular, the value of a stochastic optimal stopping problem is not continuous with respect to this weak topology \citep{backhoff-veraguas20Adapted,backhoff-veraguas22Adapted}.
Different notions of adapted topologies have been proposed to refine the weak topology, such as Aldous's extended weak topology \citep{aldous81Weak}, Hellwig's information topology \citep{hellwig96Sequential},  Hoover--Keisler topology \citep{hoover84Adapteda,hoover87Characterization}, nested distance \citep{pflug12Distance}, and the adapted Wasserstein distance \citep{lassalle18Causal,bion-nadal19Wassersteintype}.
In the seminal paper \citet{backhoff-veraguas20All}, these notions are unified and proven to be all equivalent to the initial topology of the optimal stopping problems in a discrete-time setting.
The essence of all aforementioned adapted topologies is to consider not only the law but also the conditional law of the stochastic process with respect to its natural filtration.
Or, in other words, to incorporate the information flow carried by the underlying process.

We focus on the adapted Wasserstein distance which was first introduced in \citet{lassalle18Causal} as a dynamic counterpart of the Wasserstein distance for stochastic processes.
For two stochastic processes \(X_{1}\) and \(X_{2}\) on a given probability space \((\Omega,\cF,\P)\), their adapted 2-Wasserstein distance is given by
\begin{equation}
	\AW_{2}(X_{1},X_{2}):=\inf_{\pi\in\Pi_{\bc}(X_{1},X_{2})}\E_{\pi}[\|X_{1}-X_{2}\|^{2}]^{1/2},
\end{equation}
where \(\|\cdot\|\) is the \(L^{2}\) norm on the path space, and  \(\Pi_{\bc}(X_{1},X_{2}):=\{\pi \text{ is bicausal}:\pi(\cdot\times \Omega)=\pi(\Omega\times\cdot)=\P\}\) is a subset of  couplings with an additional \emph{causality} constraint.
Heuristically speaking, under such constraint, \(\cF_{T}^{X_{1}}\) (the future of \(X_{1}\)) is conditionally independent of  \(\cF_{t}^{X_{2}}\) (the past of \(X_{2}\)) given \(\cF_{t}^{X_{1}}\) (the past of \(X_{1}\)), and vice versa.
We refer to Section \ref{sec-pre} for a more precise definition.
It has been applied to the analysis of various aspects of robust finance such as stability \citep{backhoff-veraguas20Adapted}, sensitivity \citep{bartl22Sensitivity,jiang24Sensitivity}, and model risk \citep{jiang24Duality,sauldubois24First}.
However, computing the adapted Wasserstein distance analytically, or even numerically, is difficult, due to the additional causality constraint.
Even in discrete time, few explicit formulas have been obtained for the adapted Wasserstein distance, see \citet{gunasingam25Adapteda,acciaio24Entropica,backhoff-veraguas17Causal}, etc.
In continuous time,  to the best of our knowledge,  there has been little to no results beyond the semi-martingale framework, see \citet{lassalle18Causal,bion-nadal19Wassersteintype,backhoff-veraguas22Adapted}, etc.

In this paper, we leverage a simple yet effective transfer principle to compute the explicit adapted Wasserstein distance between Gaussian processes and identify the optimal coupling between fractional stochastic differential equations.
Given a transport map \(T_{i}\) such that \(X_{i}=T_{i}(Y_{i})\) and \(X_{i}\), \(Y_{i}\) generate the same natural filtration, then \(\Pi_{\bc}(X_{1},X_{2})=\Pi_{\bc}(Y_{1},Y_{2})\) and
\begin{equation}
	\label{eqn-transfer}
	\AW_{2}(X_{1},X_{2})=\inf_{\Pi_{\bc}(Y_{1},Y_{2})} \E_{\pi}[\|T_{1}(Y_{1})-T_{2}(Y_{2})\|^{2}]^{1/2}.
\end{equation}
This principle transfers the original transport problem from \(X_{1}\) and \(X_{2}\) to \(Y_{1}\) and \(Y_{2}\) which could have a much simpler structure.
In particular, if \((Y_{i}(t))_{t\in I}\) has  independent marginals, then under any bicausal coupling, one can verify
\begin{equation}
	\label{eqn-margin}
	Y_{1}(t) \text{ is independent from } Y_{2}(s) \text{ for distinct } s,t\in I.
\end{equation}
In \citet{backhoff-veraguas22Adapted}, this principle has been already applied to transfer bicausal couplings between SDEs to bicausal couplings between Brownian motions.
To illustrate the idea, we consider a simpler example of discrete-time Gaussian processes from \citet{gunasingam25Adapteda}.
Let \(X_{i}\sim \cN(0,\Sigma_{i} )\) be an \(N\)-step 1D non-degenerate Gaussian process.
We construct \(X_{i}=K_{i}Y_{i}\) where  \(K_{i}\) is the Cholesky decomposition of \(\Sigma_{i}\) and \(Y_{i}\sim \cN(0,\Id_{N})\) is a standard Gaussian.
Indeed, \(X_{i}\) and \(Y_{i}\) generate the same natural filtration as \(K_{i}\) is lower triangular and invertible.
By applying the transfer principle and \eqref{eqn-margin}, we can calculate \(\AW_{2}(X_{1},X_{2})\) as
\begin{align*}
	AW_{2}(X_{1},X_{2})^{2} & =\tr(\Sigma_{1}+\Sigma_{2})-2\sup_{\pi\in\Pi_{\bc}(X_{1},X_{2})}\E_{\pi}\lb[\la X_{1},X_{2}\ra\rb]                               \\
	                        & = \tr(\Sigma_{1}+\Sigma_{2})-2\sup_{\pi\in\Pi_{\bc}(Y_{1},Y_{2})}\E_{\pi}\lb[\la K_{1}Y_{1},K_{2}Y_{2}\ra\rb]                    \\
	                        & = \tr(\Sigma_{1}+\Sigma_{2})-2\sup_{\pi\in\Pi_{\bc}(Y_{1},Y_{2})}\sum_{n=1}^{N}(K_{1}^{*}K_{2})_{n,n}\E_{\pi}[Y_{1}(n)Y_{2}(n)].
\end{align*}
This gives us \(\AW_{2}(X_{1},X_{2})^{2}=\tr(\Sigma_{1}+\Sigma_{2})-2\sum_{n=1}^{N}|(K_{1}^{*}K_{2})_{n,n}|\) as choosing \(\E_{\pi}[Y_{1}(n)Y_{2}(n)]\) to match the sign of the diagonal element \((K_{1}^{*}K_{2})_{n,n}\) attains the supremum.
Heuristically, we can view \(Y_{i}\) as a `nicer' coordinate system which leads to a `nicer' parameterization of the set of bicausal couplings, and hence simplifies the computation.

Our first main result extends the above example to a continuous-time setting and computes the adapted Wasserstein distance between  mean-square continuous Gaussian processes.
To apply the transfer principle, in Section \ref{sec-factorization}, we introduce a notion of `canonical causal factorization' as an infinite-dimensional analogue of the Cholesky decomposition for operators on Hilbert spaces.
This notion naturally bridges an algebraic object `nest algebra' \citep{davidson88Nest} and a probabilistic object `canonical representation'  \citep{hida60Canonical} of Gaussian processes.
Our results give an explicit formula of the adapted Wasserstein distance in terms of the canonical causal factorization of the covariance operator, or equivalently, of the canonical representation of the Gaussian process, see Theorem \ref{thm-gp}.
For example, any fractional Brownian motion \(B_{H}\) has a Molchan--Golosov representation given by
\(B_{H}(t)=\int_{0}^{t}k_{H}(t,s)\md B(s)\), where \(H\in(0,1)\) is the Hurst parameter, \(k_{H}\) is the Molchan--Golosov kernel \citep{molchan69Gaussian, decreusefond99Stochastic}
\begin{equation*}
	k_{H}(t,s)=\Gamma(H+1/2)^{-1}(t-s)^{H-1/2} F(H-1/2,1/2-H,H+1/2,1-t/s)\1_{\{s\leq t\}},
\end{equation*}
\(F(a,b,c,z)\) is the Gaussian hypergeometric function \(
F(a,b,c,z)=\sum_{n=0}^{\infty}\frac{(a)_{n}(b)_{n}}{(c)_{n}}\frac{z^{n}}{n!}\),
and \((x)_{n}=\Gamma(x+n)/\Gamma(x)\) is the Pochhammer symbol.
We have the following result as a direct application of Theorem~\ref{thm-gp}.
\begin{thm}
	\label{thm-fb}
	Let \(B_{H_{i}}\) be the fractional Brownian motion with Hurst parameter \(H_{i}\in(0,1)\).
	Then the adapted 2-Wasserstein distance bewteen \(B_{H_1}\) and \(B_{H_{2}}\) is given by
	\begin{equation*}
		\AW_{2}(B_{H_{1}},B_{H_{2}})^{2}=\int_{0}^{T}\int_{0}^{T}(k_{H_{1}}(t,s)-k_{H_{2}}(t,s))^{2}\md t\md s.
	\end{equation*}
	Moreover, the optimal coupling is given by the synchronous coupling between \(B_{H_{1}}\) and \(B_{H_{2}}\), i.e., they are driven by the same Brownian motion in their Molchan--Golosov representations.
\end{thm}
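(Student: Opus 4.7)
The plan is to derive Theorem \ref{thm-fb} as a direct corollary of Theorem \ref{thm-gp} by exhibiting the Molchan--Golosov representation as the canonical causal factorization of the fractional Brownian covariance. Write $B_{H_i}(t) = (K_{H_i} B^i)(t) := \int_{0}^{t} k_{H_i}(t,s) \, \md B^i(s)$ and view $K_{H_i}$ as a Hilbert--Schmidt integral operator on $L^2([0,T])$. Since $k_{H_i}(t,s)$ vanishes on $\{s > t\}$, the operator $K_{H_i}$ is lower-triangular with respect to the standard nest of subspaces $(L^2([0,t]))_{t \in [0,T]}$, and the covariance operator of $B_{H_i}$ factors as $K_{H_i} K_{H_i}^{*}$, which is the required causal factorization.

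The key verification is that this factorization is the \emph{canonical} one, meaning that $B_{H_i}$ and $B^i$ generate the same natural filtration. This is the classical Molchan--Golosov statement: the kernel $k_{H_i}$ admits an explicit bounded inverse on $L^2([0,T])$ (cf.\ \citet{molchan69Gaussian,decreusefond99Stochastic}), so $B^i$ can be recovered from $B_{H_i}$ in a causal manner, and conversely by the representation itself. Hence $B_{H_i}$ sits within the scope of Theorem \ref{thm-gp}, with the driving Brownian motion $B^i$ playing the role of the independent-marginal `nicer coordinate system' $Y_i$ described in \eqref{eqn-margin}.

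Applying Theorem \ref{thm-gp} then identifies the synchronous coupling $B^1 = B^2 =: B$ as optimal. Under this coupling, It\^o isometry yields
\begin{equation*}
	\E\bigl[\|B_{H_1}-B_{H_2}\|^{2}\bigr] = \int_{0}^{T} \E\left[\left(\int_{0}^{t} (k_{H_1}(t,s)-k_{H_2}(t,s)) \, \md B(s)\right)^{2}\right] \md t = \int_{0}^{T}\!\int_{0}^{t} (k_{H_1}(t,s)-k_{H_2}(t,s))^{2} \, \md s \, \md t,
\end{equation*}
and this extends to the double integral over $[0,T]^2$ stated in the theorem because both kernels vanish for $s > t$. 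The main obstacle, therefore, is not the computation but the structural input: one must confirm that the Molchan--Golosov kernel produces a canonical factorization (filtration identity) and that the abstract machinery of Theorem \ref{thm-gp} applies despite $B_H$ being neither Markov nor a semi-martingale for $H \neq 1/2$. Once this is in place, the explicit formula and the optimality of the synchronous coupling follow immediately.
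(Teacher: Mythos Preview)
Your approach matches the paper's: verify that the Molchan--Golosov representation is canonical (the paper cites \citet[Theorem~5.1]{jost06Transformation} for the filtration identity, where you invoke the explicit inverse kernel from \citet{molchan69Gaussian,decreusefond99Stochastic}), and then invoke Theorem~\ref{thm-gp}.

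There is one step you gloss over. Theorem~\ref{thm-gp} does not assert that the \emph{synchronous} coupling is optimal in general: formula~\eqref{eqn-gp1} carries the absolute value $|\langle k_{1}(\cdot,s),k_{2}(\cdot,s)\rangle|$, and the optimal correlation $\rho(s)$ is chosen to match the sign of this inner product. To conclude that $\rho\equiv 1$ is optimal---and hence that the distance collapses to $\|K_{H_{1}}-K_{H_{2}}\|_{\HS}^{2}$ rather than something strictly smaller---you must note that the Molchan--Golosov kernels satisfy $k_{H_{i}}\geq 0$, so $\langle k_{H_{1}}(\cdot,s),k_{H_{2}}(\cdot,s)\rangle\geq 0$ for every $s$. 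The paper leaves this implicit in the fBM example but spells it out in the adjacent fractional Ornstein--Uhlenbeck example. Once you add this observation, your It\^o-isometry computation under the synchronous coupling and the paper's direct substitution into~\eqref{eqn-gp1} are the same argument.
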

Our second result considers the adapted Wasserstein distances between fractional stochastic differential equations.
By applying the transfer principle, we reformulate the adapted Wasserstein distance as a stochastic optimal control problem of fractional SDEs.
The control only appears as the correlation between the driving noises.
We show the optimality of the synchronous coupling by adapting the path-dependent HJB equation framework from \citet{viens19martingale}, see Theorem~\ref{thm-fsde}.
In particular, for SDEs driven by fractional Brownian motions, we have the following result.
\begin{thm}
	Let \(X_{i}\) be the solution of the following fractional SDE
	\begin{equation*}
		X_{i}(t)=x_{i}+\int_{0}^{t}b_{i}(X_{i}(t))\md t + \int_{0}^{t}\sigma_{i}(X_{i}(t))\md B_{H_{i}}(t),
	\end{equation*}
	where \(B_{H_{i}}\) is the fractional Brownian motion with Hurst parameter \(H_{i}\in(1/2,1)\).
	We assume that \(b_{i},\sigma_{i}\in C^{2}\) with bounded first and second derivatives, and \(b_{i}'',\sigma_{i}''\) are uniformly continuous.
	Moreover, \(\sigma_{i}\) is positive, bounded, and bounded away from zero.
	Then, \(\AW_{2}(X_{1},X_{2})\) is attained by the synchronous coupling between \(B_{H_{1}}\) and \(B_{H_{2}}\).
\end{thm}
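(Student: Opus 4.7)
The plan is to apply the transfer principle twice to reduce the problem to a stochastic optimal control problem over the joint quadratic covariation of two driving standard Brownian motions, and then to invoke the path-dependent HJB framework of \citet{viens19martingale} to identify the synchronous coupling as optimal.

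First I would transfer each SDE individually. Under the stated $C^2$ and non-degeneracy hypotheses on $(b_i,\sigma_i)$, the fractional SDE admits a pathwise unique solution and the map $B_{H_i}\mapsto X_i$ is adapted with an adapted inverse (obtained by pathwise inverting $\sigma_i$), so $X_i$ and $B_{H_i}$ generate the same natural filtration. The Molchan--Golosov representation provides a second such identification between $B_{H_i}$ and an underlying standard Brownian motion $W_i$, because $k_{H_i}$ acts as an invertible Volterra operator on $L^2([0,T])$. Combining both identifications and applying \eqref{eqn-transfer},
\begin{equation*}
\AW_2(X_1,X_2)^2 = \inf_{\pi \in \Pi_{\bc}(W_1,W_2)} \E_\pi\lb[\int_0^T |X_1(t)-X_2(t)|^2\md t\rb],
\end{equation*}
where on any probability space carrying $\pi$, $X_i$ is recovered from $W_i$ by first forming $B_{H_i}$ via $k_{H_i}$ and then solving the fractional SDE.

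Next I would parameterize $\Pi_{\bc}(W_1,W_2)$ by a correlation process. On a sufficiently rich extension any bicausal coupling is realized by a two-dimensional continuous martingale $(W_1,W_2)$ with $\md\la W_1,W_2\ra_t = \rho_t\,\md t$ for a progressively measurable $\rho$ taking values in $[-1,1]$, and every such $\rho$ conversely defines an element of $\Pi_{\bc}(W_1,W_2)$. The adapted Wasserstein problem thus becomes the stochastic optimal control problem
\begin{equation*}
\inf_{\rho}\E\lb[\int_0^T |X_1^\rho(t)-X_2^\rho(t)|^2\md t\rb],
\end{equation*}
where $X_i^\rho$ solves the fractional SDE driven by $B_{H_i}^\rho(t)=\int_0^t k_{H_i}(t,s)\md W_i(s)$ with $\la W_1,W_2\ra$ controlled by $\rho$. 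The choice $\rho\equiv 1$ corresponds to $W_1=W_2$, i.e.\ the synchronous coupling of $B_{H_1}$ and $B_{H_2}$.

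The principal step is to verify that $\rho\equiv 1$ minimizes the Hamiltonian of this control problem, using the path-dependent HJB framework of \citet{viens19martingale}. That approach lifts the non-Markovian Volterra system to an infinite-dimensional Markovian one by tracking the conditional-expectation paths $\Theta^i_t(r):=\E_t[X_i^\rho(r)]$ for $r\geq t$ together with a functional It\^o calculus. In this formulation $\rho_t$ enters the Hamiltonian linearly through a single coefficient that is, up to a positive factor, the $L^2$ inner product of the Malliavin-type sensitivities $D_{W_1}X_1^\rho$ and $D_{W_2}X_2^\rho$. Under the stated hypotheses ($H_i>1/2$, $\sigma_i>0$, $C^2$ coefficients), these sensitivities are continuous and strictly positive, so the coefficient has a definite sign and the Hamiltonian is minimized at $\rho_t\equiv 1$. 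The hard part will be making this step fully rigorous: one must establish enough regularity of the value functional to apply a path-dependent verification theorem, control the singular behaviour of $k_{H_i}$ near the diagonal so that the Volterra Malliavin derivatives are well-defined in $L^2$, and prove positivity of the sensitivity processes through a Gr\"onwall-type argument on the linear variational equation. Once pointwise optimality of $\rho\equiv 1$ is established, the admissible synchronous coupling attains the infimum.
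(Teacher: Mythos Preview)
Your broad strategy---transfer to the driving standard Brownian motions, reformulate as a control problem over the correlation process, and verify optimality of $\rho\equiv 1$ via the path-dependent HJB machinery of \citet{viens19martingale}---is exactly the paper's. The divergence is in how you handle the multiplicative noise, and here there is a real gap.

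The paper does \emph{not} apply the Viens--Zhang framework to the multiplicative SDE directly. It first performs a Lamperti transform: with $g_i(x)=\int_{x_i}^x\sigma_i(\xi)^{-1}\md\xi$ and $Y_i=g_i(X_i)$, one obtains the \emph{additive} equation $Y_i(t)=\int_0^t\tilde b_i(Y_i(s))\md s+Z_i(t)$ with $\tilde b_i=(b_i/\sigma_i)\circ g_i^{-1}$, and the cost becomes $\int_0^T|g_1^{-1}(Y_1(t))-g_2^{-1}(Y_2(t))|^2\md t$. All of the HJB verification (regularity of $V_*$, functional It\^o formula, sign of the cross second derivative) is carried out only in this additive setting, with the generalized cost handled via Remark~\ref{rmk-gen}. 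You already invoke the inverse of $\sigma_i$ to match filtrations; the missing move is to push it further and use it to transform the dynamics.

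This reduction is not cosmetic. In the additive case the first-variation process $\Gamma_{i,*}$ satisfies the \emph{pathwise linear ODE} \eqref{eqn-gam}, with no stochastic integral, admitting the explicit Duhamel formula \eqref{eqn-gam2}. The regularity estimates of Section~\ref{sec-estimate} and the sign of $\langle k_i(\cdot,r),\Gamma_{i,*}(t)\rangle$ both rest on this structure; the latter follows by integrating \eqref{eqn-gam2} by parts against the \emph{monotone} kernel $k_{H_i}(\cdot,r)$ (the Molchan--Golosov kernel is increasing in $t$ for $H_i>1/2$, which is the operative structural fact here, not merely $\sigma_i>0$). If you attack the multiplicative SDE directly, the first variation acquires an additional stochastic term $\int_r^{\cdot}\sigma_i'(\Theta)\langle\eta,\Gamma\rangle k_i(t,s)\md B(s)$, turning it into a linear Volterra SDE. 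Positivity then needs a comparison argument for such equations, and the $C^{1,2}$ regularity of $V_*$ required by the verification theorem demands uniform control of the second variation, which now involves $\sigma_i''$ inside stochastic integrals. Your proposal acknowledges this as ``the hard part'' but does not supply the mechanism; the paper's Lamperti reduction is precisely that mechanism.
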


Admittedly, the regularity constraint in the above result is not optimal, as is often the case in classical stochastic control theory, where strong assumptions are imposed to ensure the verification theorem.
In a forthcoming work, we aim to relax the regularity constraint through a time-discretization approximation in the spirit of \citet{backhoff-veraguas22Adapted} and extend results to stochastic Volterra equations with monotone kernels.

To the best of our knowledge, this is the first work to investigate the adapted Wasserstein distance between fractional processes.
We stress that these processes are neither semimartingales nor Markovian,
which  precludes a direct application of techniques from the existing literature.
Their ability to capture long-range dependence and rough path behavior has led to impactful applications, notably in finance \citep{baillie96Long, rogers97Arbitrage, cont05Long}, in physics \citep{metzler00random}, in engineering \citep{levy-vehel05Fractals}, and filtering theory \citep{decreusefond98Fractional}.

\subsection{Related literature}
We review the existing literature on the computation of adapted Wasserstein distances.
For broader literature related causal optimal transport problems, we refer readers to \citet{backhoff-veraguas20All,bartl24Wasserstein,bartl25Wasserstein} and references therein.
In discrete time, \citet{gunasingam25Adapteda} computed explicitly the adapted Wasserstein distance between two 1D Gaussian processes.
More recently, \citet{acciaio24Entropica} extended the previous result to multi-dimensional Gaussian processes and also considered an entropic regularization. Both of these results leveraged a dynamic programming principle from \citet{backhoff-veraguas17Causal}, which is distinct from the transfer principle considered in this work.
Instead of computing the explicit formula of the adapted Wasserstein distance, the Knothe--Rosenblatt coupling is identified as the optimal coupling between  co-monotone distributions in discrete time \citep{ruschendorf85Wasserstein,backhoff-veraguas17Causal}.
In continuous time, it is shown in \citet{lassalle18Causal}, for a Cameron--Martin cost, the adapted Wasserstein distance between an arbitrary probability measure and the Wiener measure is equal to the square-root of its relative entropy with respect to the Wiener measure.
For \(L^{2}\) cost, it is shown in \citet{bion-nadal19Wassersteintype} and later in \citet{backhoff-veraguas22Adapted,robinson24Bicausal} that the synchronous coupling is the optimal coupling between two 1D SDEs.

Another line of research is to numerically compute the adapted Wasserstein distance by approximation or regularization.
These results are mainly in a discrete-time setting.
For instance, \citet{eckstein24Computational} proposed numerical algorithms to compute the entropic regularized adapted Wasserstein distance.
In  \citet{pflug16Empirical,backhoff22Estimating,acciaio24Convergence}, the authors studied various smoothed adapted empirical measures and derived the convergence rate to their limit under the adapted Wasserstein distance.

The notion of causality underpinning the adapted Wasserstein distance, when placed in the context of linear transformations between (finite-dimensional) vector spaces, naturally corresponds to the triangularity of these transformations. A suitable generalization of these triangular forms to Hilbert spaces is the nest algebra, which originates from the work of \citet{ringrose65Algebras}.
The nest algebra is a prime example of non-selfadjoint algebras and reflexive algebras in the sense of \citet{arveson74Operator}.
Early research focused on the structure of compact operators in nest algebras, see \citet{ringrose62SuperDiagonal,erdos68Operators}, etc.
Further developments include the characterization of the radical \citep{ringrose65Algebras}, unitary invariants \citep{erdos67Unitary}, and similarity invariants \citep{larson85Nest}.
The causal factorization introduced in Section~\ref{sec-factorization} is motivated by several pioneering works \citep{pitts88Factorization,anoussis97Factorisation,anoussis98Factorisation}.
We refer interested readers to \citet{davidson88Nest} for a more complete reference.


In order to study the prediction theory of Gaussian processes,  \citet{levy56Special} introduced the canonical representation of a Gaussian process, which provides a full description of its natural filtration. This canonical representation and the related notion of multiplicity was systematically investigated in \citet{hida60Canonical,hida93Gaussian,hitsuda68Representation} and extended by  \citet{cramer71Structural}  to general stochastic processes. In the sequel, we clarify the connection between the canonical representation of Gaussian processes and the (canonical) causal factorization of their covariance operators.

\subsection{Outline}
The rest of the paper is organized as follows.
In Section \ref{sec-pre}, we recall basic definitions and properties of the adapted Wasserstein distance and the canonical representation of Gaussian processes.
In Section~\ref{sec-factorization}, we introduce the (canonical) causal factorization and discuss its existence and uniqueness.
A characterization of the Gaussian Volterra processes is given in Corollary \ref{cor-volterra} which we believe is of independent interest.
In Section \ref{sec-Gaussian}, we apply the transfer principle to compute the adapted 2-Wasserstein distance between Gaussian processes.
An explicit formula for the distance is given in Theorems \ref{thm-gp} and~\ref{thm-gp2} for the unit multiplicity case and the higher multiplicity case respectively.
An optimal coupling is identified in both cases.
In Theorem \ref{thm-mart}, we consider the best martingale approximation to a fractional Brownian motion with respect to the adapted 2-Wasserstein distance.
In Section \ref{sec-fsde}, we study the adapted Wasserstein distance between fractional SDEs via a stochastic control reformulation.
We establish a verification theorem for additive fractional SDEs and reduce the multiplicative case into the additive case  via a Lamperti transform \citep{lamperti64simple}.
Some technical estimates are postponed to Section \ref{sec-estimate}.

\section{Preliminaries}
\label{sec-pre}
\subsection{Notations}

For a  Polish space \(\cX\), we equip it with its Borel \(\sigma\)-algebra \(\cB(\cX)\).
Let \(\scrP(\cX)\) be the space of Borel probability measures on \(\cX\) equipped with its weak topology.
Given \(\mu\in\scrP(\cX)\) and a \(\sigma\)-algebra \(\cF\subseteq\cB(\cX)\), we denote the completion of \(\cF\) under \(\mu\)  by \(\prescript{\mu}{}\cF\).

Let \(\cX\) and \(\cY\) be two Polish spaces.
Given \(\mu\in\scrP(\cX)\) and \(\nu\in\scrP(\cY)\),  the set of couplings between \(\mu\) and \(\nu\) is defined as
\begin{equation*}
	\Pi(\mu,\nu):=\{\pi\in\scrP(\cX\times\cY):\pi(\cdot\times \cY)=\mu(\cdot)\text{ and } \pi(\cX\times\cdot)=\nu(\cdot)\}.
\end{equation*}
Given \(\mu\in\scrP(\cX)\) and a measurable map \(\Phi:\cX\to\cY\), we define the \emph{pushforward} map \(\Phi_{\#}:\scrP(\cX)\to\scrP(\cY)\) by
\begin{equation*}
	\Phi_{\#}\mu:=\mu\circ \Phi^{-1} \quad\text{for any } \mu\in\scrP(\cX).
\end{equation*}
For any \(\pi\in \scrP(\cX\times \cY)\), we write
\begin{equation*}
	\pi(\md x,\md y)=\pi(\md x) \theta(x,\md y),
\end{equation*}
where  \(\theta\) is the Borel regular disintegration kernel.

Let \(\mu,\mu_1,\mu_2\) be positive measures on \([0,T]\).
We write \(H_{\mu}=L^{2}([0,T],\mu;\bbR)\) and \(\la \cdot , \cdot \ra_{\mu}\) as the inner product on \(H_{\mu}\) with the induced norm \(\|\cdot\|_{\mu}\).
We write \(H_{\mu,t}=\{f\in H_{\mu}: \supp(f)\subseteq[0,t]\}\) as a closed subspace of \(H_{\mu}\).
We equip \(H_{\mu}\) with the Borel \(\sigma\)-algebra \(\cB(H_{\mu})\) and its natural filtration \(\bH_{\mu}=(\cH_{\mu,t})_{t\in[0,T]}\), where \(\cH_{\mu,t}:=\sigma(f\in H_{\mu,t})\).
Here, we identify \(H_{\mu,t}\) with its dual \(H_{\mu,t}^{*}\).
By \(\frB(H_{\mu_{1}},H_{\mu_{2}})\) we denote the set of bounded linear operator \(A:H_{\mu_{1}}\to H_{\mu_{2}}\), and we write \(\frB(H_{\mu})=\frB(H_{\mu},H_{\mu})\).
Given a closed subspace \(N\subseteq H_{\mu}\), we denote the orthogonal projection onto \(N\) by \(P_{N}\).

We say an operator \(A\in\frB(H_{\mu})\) is positive if \(\la Af,f\ra_{\mu}\geq0\) for any \(f\in H_{\mu}\).
We say an operator \(A\in\frB(H_{\mu})\)  is trace-class, if \(\|A\|_{\tr}:=\sum_{k\geq1}\la |A|e_{k},e_{k}\ra_{\mu}\) is finite for an orthonormal basis \((e_{k})_{k\geq1}\) of \(H_{\mu}\).
An operator \(K:H_{\mu_{1}}\to H_{\mu_{2}}\) is Hilbert--Schmidt, if \(KK^{*}\) is trace-class where \(K^{*}\) is the dual operator of \(K\).
Its Hilbert--Schmidt norm is defined as \(\|K\|_{\HS}=\sqrt{\tr(KK^{*})}\).
We denote the set of Hilbert--Schmidt operators from \(H_{\mu_{1}}\) to \(H_{\mu_{2}}\) by \(\frB_{2}(H_{\mu_{1}},H_{\mu_{2}})\).
There exists an isometry from \(\frB_{2}(H_{\mu_{1}},H_{\mu_{2}})\)  to  \(L^{2}([0,T]^{2}, \mu_{1}\otimes \mu_{2};\bbR)\).
In fact, every Hilbert--Schmidt operator \(K\) has a kernel \(k\in L^{2}([0,T]^{2},\mu_{1}\otimes \mu_{2};\bbR)\) such that \(Kf(t)=\int_{0}^{T} k(t,s)f(s)\mu_{1}(\md s)\in H_{\mu_{2}}\).
We omit the subscript if \(\mu=\lambda\) the Lebesgue measure on \([0,T]\).

By \(\mu_{1}\gg\mu_{2}\) we denote \(\mu_{2}\) is absolutely continuous with respect to \(\mu_{1}\) and write their Radon--Nikodym derivative as \( \frac{\md \mu_{2}}{\md \mu_{1}}\).
We denote the geometric mean of \(\mu_{1}\) and \(\mu_{2}\) by \[\sqrt{\mu_{1}\mu_{2}}(\md t):= \sqrt{\frac{\md \mu_{1}}{\md (\mu_{1} + \mu_{2})}\frac{\md \mu_{2}}{\md (\mu_{1} + \mu_{2})}} (t)(\mu_{1}+\mu_{2})(\md t).\]

Let \(C([0,T];\bbR)\) be the continuous path space.
For a functional \(f\) on \(C([0,T];\bbR)\), we say \(f\) is Fr\'echet differentiable at \(\omega\in C([0,T];\bbR)\) if there exists a linear functional \(\partial_{\omega}f(\omega) \in C([0,T];\bbR)^{*}\) such that for any \(\eta\in C([0,T];\bbR)\) it holds
\begin{equation*}
	f(\omega+\eta)-f(\omega)=\la \eta, \partial_{\omega}f(\omega)\ra +o(\|\eta\|).
\end{equation*}
We call \(\partial_{\omega}f\) the Fr\'echet derivative of \(f\).
Similarly, we define the second Fr\'echet derivative of \(f\) and denote it as \(\partial^{2}_{\omega}f\).
Given two linear functionals \(f,g\in C([0,T];\bbR)^{*}\), we denote their tensor product as a bilinear functional given by
\begin{equation*}
	\la (\eta_{1},\eta_{2}),f\otimes g\ra=\la \eta_{1},f\ra \la \eta_{2},g\ra,
\end{equation*}
for any \(\eta_{1},\eta_{2}\in C([0,T];\bbR)\).

\subsection{Adapted Wasserstein distance}
In the spirit of \citet{lassalle18Causal,acciaio20Causal}, we present a seemingly different, but equivalent, definition of the adapted Wasserstein distance. One can view our approach as a strong formulation and the framework of filtered process developed in \citet{bartl24Wasserstein,bartl25Wasserstein,pammer24note} as a weak formulation.

We begin with the \emph{causal} transport map between two filtered Polish spaces.
\begin{defn}
	Let \((\Omega_{1},(\cF_{1,t})_{t\in[0,T]})\) and \((\Omega_{2},(\cF_{2,t})_{t\in[0,T]})\)  be two filtered Polish spaces. We say \(T:\Omega_{1}\to\Omega_{2}\) is \emph{causal} if for any \(t\in[0,T]\) it holds \(T^{-1}(\cF_{2,t})\subseteq \cF_{1,t}\).
\end{defn}

The above motivates the definition of (bi)causal couplings between two stochastic processes by viewing the disintegration kernel as a randomized transport map.
From now on, we fix a Polish probability space \((\Omega,\cF,\P)\) and consider stochastic processes on it.
We denote the (completed) natural filtration of a stochastic process \(X\) by \(\bF^{X}=(\cF^{X}_{t})_{t\in[0,T]}\) where \(\cF^{X}_{t}=\prescript{\P}{}\sigma(X(s):s\leq t)\).

\begin{defn}[Causal coupling]
	Let \(X_{1}\) and \(X_{2}\) be two stochastic processes on \(\Omega\).
	We say a coupling \(\pi\in\Pi(\P,\P)\) is \emph{causal} from \(X_{1}\) to \(X_{2}\) if for any \(t\in [0,T]\) and \(U\in \cF^{X_{2}}_{t}\)
	\begin{equation}
		\label{def-caus}
		\Omega\ni \omega_{1}\mapsto \theta(\omega_{1},U)\in\bbR
	\end{equation}
	is \(\cF^{X_{1}}_{t}\)--measurable, where \(\pi(\md \omega_{1},\md \omega_{2})=\pi(\md \omega_{1})\theta(\omega_{1},\md \omega_{2})\).
	We say \(\pi\in\Pi(\P,\P)\) is bicausal if it is causal and \([(x_{1},x_{2})\mapsto (x_{2},x_{1})]_{\#}\pi\) is causal from \(X_{2}\) to \(X_{1}\).
	We denote the set of causal (bicausal) couplings from \(X_{1}\) to \(X_{2}\) by \(\Pi_{\c}(X_{1},X_{2})\) \((\Pi_{\bc}(X_{1},X_{2}))\).
\end{defn}
In particular, if a Monge map  \(T: (\Omega,\bF^{X_{1}}) \to (\Omega,\bF^{X_{2}})\) is causal and measure preserving, then \((\Id,T)_{\#}P\in \Pi_{\c}(X_{1},X_{2})\).
The (bi)causal transport maps are shown to be dense among (bi)causal transport couplings under different settings, see \citet{beiglbock25Denseness,cont24Causal}.
We notice that the causality condition here only depends on the filtration of the underlying probability space and can be easily extended to the case where the source and target probability space \((\Omega,\cF,P)\) are different.

\begin{defn}
	We say a process \(X\) is mean-square continuous if \(t\mapsto X(t)\in L^{2}(\Omega,P)\) is continuous.
	For two mean-square continuous stochastic processes \(X_{1}\) and \(X_{2}\), their adapted Wasserstein distance is defined as
	\begin{equation}
		\label{eqn-ad}
		\AW_{2}(X_{1},X_{2}):=\inf_{\pi\in\Pi_{\bc}(X_{1},X_{2})}\E_{\pi}[\|X_{1}-X_{2}\|^{2}]^{1/2}.
	\end{equation}
\end{defn}

\begin{rmk}
	\label{rmk-ad}
	Strictly speaking, the adapted Wasserstein distance defined here is a pseudo distance on the set of mean-square continuous stochastic processes on \(\Omega\).
	For any \(X_{1}\) and \(X_{2}\) with the same law in \(H=L^{2}([0,T];\bbR)\), we have \(\AW_{2}(X_{1},X_{2})=0\).
	More generally, if we equip \(H\) with its Borel \(\sigma\)-algebra and a natural filtration \(\bH=(\cH_{t})_{t\in[0,T]}\) where \(\cH_{t}=\sigma(h\in H^{*}:\supp(h)\in[0,t])\).
	Then, for any bicausal coupling \(\hat{\pi}\) between \((H,\cB(H),\bH,\Law(X_{1}))\) and  \((H,\cB(H),\bH,\Law(X_{2}))\) can be lifted to a bicausal coupling \(\pi\) between \(X_{1}\) and \(X_{2}\).
	The lifted coupling \(\pi\) is given by
	\[\pi(\md \omega_{1},\md \omega_{2})= \hat{\pi}(\md x_{1},\md x_{2} )\theta_{1}(x_{1},\md \omega_{1})\theta_{2}(x_{2},\md \omega_{2}),\]
	where \(\pi_{i}=(X_{i},\Id_{\Omega})_{\#}P\) and \(\theta_{i}\) is the disintegration kernel of \(\pi_{i}\) with  \(\pi_{i}(\md x_{i}, \md \omega_{i})=\pi_{i}(\md x_{i}) \theta_{i}(x_{i},\md \omega_{i})\).
	One can verify \(\pi\), \(\pi_{i}\) are all bicausal, and in particular,
	\((X_{1},X_{2})_{\#}\pi= \hat{\pi}\), see \citet[Lemma 3.4]{eckstein24Computational} for more details.
	Therefore, this definition induces a true distance on the \emph{law} of mean-square continuous stochastic processes, and it has the advantage of spotlighting the role of the underlying filtration.
\end{rmk}

In the recent work of \citet{bartl25Wasserstein}, the above definition is referred as the strict adapted Wasserstein distance as the induced topology is strictly stronger than the initial topology of optimal stopping problems.
A relaxed version of \eqref{eqn-ad} is proposed such that all adapted topologies are equivalent in continuous time.
Nevertheless, the current definition enjoys better analytic properties and can be viewed as natural extension from discrete time to continuous time.




\subsection{Canonical representation of Gaussian processes}
We say \(X:\Omega\times[0,T]\to\bbR\)  is a 1D Gaussian process if for any \(t_{1},\ldots,t_{n}\in[0,T]\), the random vector \((X(t_{1}),\ldots,X(t_{n}))\) is Gaussian.
In this paper, we focus on centered and mean-square continuous Gaussian processes, i.e.,
\begin{equation*}
	\E[X(t)]=0 \text{ for any } t\in[0,T], \text{ and } t\mapsto X(t)\in L^{2}(\Omega,P) \text{ is continuous.}
\end{equation*}
Notice that mean-square continuity of \(X\) implies \(X\) has path in \(H=L^{2}([0,T],\lambda;\bbR)\) almost surely.
Hence, \(X_{\#}P\) yields a Gaussian measure on \(H\) whose covariance operator \(\Sigma:H\to H\) is given by  \(\la \Sigma f,g\ra:=\E[\la f, X\ra\la g,X\ra]\).
Moreover, \(\Sigma\) has a unique continuous kernel \(R(t,s)=\E[X(t)X(s)]\) such that
\begin{equation*}
	\Sigma f(t)=\int_{0}^{T} f(s) R(t,s)\md s.
\end{equation*}

We say a Gaussian process \(X\) is \emph{deterministic} if the behavior of \(X\) is completely determined by its behavior in an infinitesimal time, i.e.,
\begin{equation*}
	\bigcap_{t>0}\overline{\mathrm{span}\{X(s):s\in[0,t]\}}= \overline{\mathrm{span}\{X(s):s\in[0,T]\}}\subseteq L^{2}(\Omega,P);
\end{equation*}
and \(X\) is \emph{purely nondeterministic} if the information of \(X\) must have entered as a new impulse at  some  definite time in the past, i.e.,
\begin{equation}
	\label{eqn-pnd}
	\bigcap_{t>0}\overline{\mathrm{span}\{X(s):s\in[0,t]\}}= 0.
\end{equation}
We shall not confuse a deterministic Gaussian process with a deterministic path-valued random variable which is supported on a single path.
For example, \(X(t)=t\xi\) where \(\xi\sim \cN(0,1)\) is a deterministic Gaussian process but not a deterministic random variable.
We remark that in Corollary \ref{cor-volterra}, we show that \eqref{eqn-pnd} is equivalent to the condition that \(\cF_{0+}^{X}=\bigcap_{t>0}\cF_{t}^{X}\) is trivial.



In what follows, we introduce the canonical representation of a Gaussian process which was initiated by \citet{levy56Special}, and systematically studied by \citet{hida60Canonical} and \citet{cramer71Structural}.
It states that a centered,  mean-square continuous, and purely nondeterministic Gaussian process is essentially driven by a countable number of `noises'.
Such a representation is canonical in the sense that the `noises' precisely generate the same natural filtration as the one of the Gaussian process.
We adapt \citet[Theorem 4.1]{hida93Gaussian} to our setting.
\begin{thm}
	\label{thm-canonical}
	Let \(X\) be a centered, mean-square continuous, and purely nondeterministic Gaussian process.
	Then there exists a number \(N\in \bbN \cup \{\infty\}\) uniquely determined by \(X\), which will be called the multiplicity of \(X\).
	The Gaussian process \(X\) has a canonical representation in the form of
	\begin{equation}
		\label{eqn-can}
		X(t)=\sum_{n=1}^{N}\int_{0}^{t} k^{n}(t,s) \md M^{n}(s),
	\end{equation}
	satisfying the following conditions:
	\begin{enumerate}[label=(\roman*)]
		\item  \(\{M^{n}\}_{n=1}^{N}\) are independent Gaussian martingales with independent increments,
		\item   \(\mu^{n}(t):=[M^{n}](t)\) is continuous, non-decreasing,  and \(\mu^{1}(\md t) \gg  \mu^{2}(\md t) \gg \cdots\),
		\item  \(t\mapsto k^{n}(t,\cdot)\in H_{\mu^{n}}\) is continuous and \(\supp(k^{n}(t,\cdot))\subseteq [0,t]\),
		\item \(\bF^{X}=\bF^{M}\) with \(M=(M_{1},\dots, M_{N})\).
	\end{enumerate}
\end{thm}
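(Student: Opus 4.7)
The plan is to reduce the theorem to a multiplicity/spectral-type theorem for the chain of closed subspaces generated by $X$. Concretely, I would work inside the Gaussian Hilbert space $G_X:=\overline{\mathrm{span}}\{X(s):s\in[0,T]\}\subseteq L^{2}(\Omega,P)$, together with its closed subspaces $G_X(t):=\overline{\mathrm{span}}\{X(s):s\in[0,t]\}$ and the orthogonal projections $P_t$ onto $G_X(t)$. Mean-square continuity of $X$ makes $t\mapsto G_X(t)$ left- and right-continuous in the sense of Hilbert-space nests, and pure nondeterminism guarantees $\bigcap_{t>0}G_X(t)=\{0\}$; by construction $\bigcup_t G_X(t)$ is dense in $G_X$. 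Thus $(P_t)_{t\in[0,T]}$ is a continuous resolution of the identity on $G_X$ starting at $0$ and ending at $\Id$.

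Next I would invoke the Hellinger--Hahn multiplicity theorem for such resolutions (equivalently, the cyclic-vector decomposition for commutative von Neumann algebras generated by $(P_t)$) to obtain a unique cardinal $N\in\bbN\cup\{\infty\}$ and an orthogonal decomposition $G_X=\bigoplus_{n=1}^{N} Z^{n}$, where each $Z^{n}$ is cyclic under $(P_t)$, generated by a vector whose spectral (scalar) measure is $\mu^{n}$, and $\mu^{1}\gg\mu^{2}\gg\cdots$. For each $n$, define $M^{n}(t)$ to be the image of the cyclic generator under $P_t$ (after choosing a normalization); then $M^{n}$ is a centered Gaussian process, the family $t\mapsto M^n(t)$ is a martingale with respect to $\bF^{M}$ because $P_s M^{n}(t)=M^{n}(s)$ for $s\le t$, and $[M^{n}](t)=\mu^{n}(t)$ is continuous and non-decreasing. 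Orthogonality of the cyclic subspaces in a Gaussian Hilbert space upgrades to \emph{independence} of the processes $M^{1},\ldots,M^{N}$, and increments of each $M^{n}$ are independent because they are orthogonal Gaussians in $Z^{n}$.

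To obtain the representation (\ref{eqn-can}), I would expand $X(t)\in G_X(t)$ along the cyclic decomposition: writing $X^{n}(t):=P_{Z^{n}}X(t)$, the cyclic structure gives a unique $k^{n}(t,\cdot)\in H_{\mu^{n}}$ with $X^{n}(t)=\int_0^T k^{n}(t,s)\,\md M^{n}(s)$, via the standard It\^o isometry identification between $Z^{n}$ and $H_{\mu^{n}}$. Because $X(t)\in G_X(t)$ and projection onto $G_X(t)$ corresponds on the spectral side to multiplication by $\mathbf{1}_{[0,t]}$, the kernel is automatically supported in $[0,t]$; mean-square continuity of $t\mapsto X(t)$ transfers to continuity of $t\mapsto k^{n}(t,\cdot)$ in $H_{\mu^{n}}$. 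Finally, $\bF^{M}\subseteq \bF^{X}$ follows from the fact that each $M^{n}(t)$ lies in $G_X(t)$, and the reverse inclusion holds because summing the cyclic pieces recovers $X(t)$ from $M$ via the Wiener integrals above; uniqueness of $N$ and of the chains $(\mu^{n})$ up to mutual absolute continuity is exactly the uniqueness statement of Hellinger--Hahn.

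The main obstacle is the continuous-parameter multiplicity step: constructing the cyclic decomposition $G_X=\bigoplus Z^{n}$ with the absolute-continuity chain $\mu^{1}\gg\mu^{2}\gg\cdots$ requires a careful Gram--Schmidt-like exhaustion procedure adapted to the continuous nest $(P_t)$, since unlike the self-adjoint spectral theorem there is no single generating projection-valued measure on $\bbR$ to invoke directly. I would handle this by passing to the associated spectral measure of $(P_t)$ on $[0,T]$, reducing the problem to ordinary Hellinger--Hahn on $L^{2}$-spaces, and then translating back; alternatively, one can iteratively pick vectors of maximal spectral type as in the classical approach of Hida. All remaining properties---Gaussianity and independence of $M^{n}$, martingality, continuity of $\mu^{n}$ and $k^{n}$, support of $k^{n}$, and the filtration identity (iv)---follow routinely from this cyclic decomposition.
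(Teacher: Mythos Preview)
The paper does not prove this theorem at all: it is quoted directly from \citet[Theorem 4.1]{hida93Gaussian} and merely adapted to the present notation. Your sketch is essentially the classical Hida--Cram\'er argument that underlies that reference---working in the Gaussian Hilbert space, applying Hellinger--Hahn multiplicity theory to the resolution of the identity $(P_t)$, and reading off the martingales $M^n$ from the cyclic generators---so there is no methodological divergence to compare. One small point worth tightening: mean-square continuity of $X$ immediately gives left-continuity of $t\mapsto G_X(t)$ (since $X(t)=\lim_{s\uparrow t}X(s)$ in $L^2$ forces $X(t)\in G_X(t-)$), but right-continuity $G_X(t+)=G_X(t)$ is not automatic from this alone and in Hida's treatment is either assumed or handled by allowing the resolution to have the usual one-sided continuity; this does not affect the multiplicity decomposition, only the phrasing.
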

In general, it is not easy to find the canonical representation of a Gaussian process.
The following result from \citet[Theorem 4.4]{hida93Gaussian} gives a characterization of the canonical representation.
\begin{thm}
	\label{thm-can}
	Let \(X\) be a Gaussian process with a representation of the form of \eqref{eqn-can}.
	Then it is a canonical representation if and only if for any \(T'\in[0,T]\) and \(f^{n}\in H_{\mu^{n}}\),
	\begin{equation*}
		g(t)=\sum_{n=1}^{N}\int_{0}^{t} k^{n}(t,s)f^{n}(s)\mu^{n}(\md s)=0 \text{ for all } t\in[0,T']
	\end{equation*}
	implies \(f^{n}=0\) on \([0,T']\) for all \(n\).
\end{thm}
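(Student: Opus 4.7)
The plan is to translate the filtration equality $\bF^X_{T'} = \bF^M_{T'}$ into an equality of closed Gaussian subspaces of $L^2(\Omega, P)$, and then pull this back through the It\^o isometry to obtain a denseness condition on the kernels $k^n$.

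For each $T' \in [0, T]$, set
\begin{equation*}
  H(X, T') := \overline{\mathrm{span}}\{X(t) : t \in [0, T']\}, \qquad H(M, T') := \overline{\mathrm{span}}\{M^n(s) : s \in [0, T'], 1 \leq n \leq N\},
\end{equation*}
with closures in $L^2(\Omega, P)$. The representation \eqref{eqn-can} gives $H(X, T') \subseteq H(M, T')$ and hence $\bF^X_{T'} \subseteq \bF^M_{T'}$ for all $T'$. I would first establish the equivalence $\bF^X_{T'} = \bF^M_{T'}$ for all $T'$ if and only if $H(X, T') = H(M, T')$ for all $T'$. The nontrivial direction uses a Gaussian uniqueness argument: fix $s \leq T'$ and decompose $M^n(s) = P_{H(X, T')} M^n(s) + R$ inside the ambient Gaussian Hilbert space $H(M, T')$. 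Then $R$ is $L^2$-orthogonal to $H(X, T')$ and, by joint Gaussianity inside $H(M, T')$, independent of $\sigma(H(X, T')) = \bF^X_{T'}$ up to completion. Assuming $\bF^X_{T'} = \bF^M_{T'}$, the random variable $R$ is itself $\bF^X_{T'}$-measurable, so it must vanish almost surely, giving $M^n(s) \in H(X, T')$ and hence equality of the two closed subspaces.

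Next, by the independence of the $M^n$ and the It\^o isometry, the map
\begin{equation*}
  J_{T'}: \bigoplus_{n=1}^{N} H_{\mu^n, T'} \to H(M, T'), \qquad (f^n)_{n=1}^{N} \mapsto \sum_{n=1}^{N}\int_{0}^{T'} f^n(s)\, \md M^n(s)
\end{equation*}
is a unitary isomorphism sending $(k^n(t, \cdot)\1_{[0,t]})_{n=1}^{N}$ to $X(t)$ for each $t \in [0, T']$. Therefore $H(X, T') = H(M, T')$ is equivalent to
\begin{equation*}
  V_{T'} := \overline{\mathrm{span}}\bigl\{(k^n(t, \cdot)\1_{[0,t]})_{n=1}^{N} : t \in [0, T']\bigr\} = \bigoplus_{n=1}^{N} H_{\mu^n, T'},
\end{equation*}
i.e.\ to $V_{T'}^\perp = \{0\}$. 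Writing out the orthogonality pairing in the direct sum, an element $(f^n)_n$ lies in $V_{T'}^\perp$ if and only if for every $t \in [0, T']$,
\begin{equation*}
  \sum_{n=1}^{N}\int_{0}^{t} k^n(t, s) f^n(s)\, \mu^n(\md s) = g(t) = 0,
\end{equation*}
so $V_{T'}^\perp = \{0\}$ is precisely the stated implication $g \equiv 0$ on $[0, T'] \Rightarrow f^n \equiv 0$ on $[0, T']$ for all $n$. Chaining the three reductions delivers the theorem.

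The main obstacle I anticipate is the Gaussian uniqueness step in the first reduction: one must take the orthogonal decomposition \emph{inside} the ambient Gaussian Hilbert space $H(M, T')$ so that the remainder is jointly Gaussian with $H(X, T')$, which is what converts $L^2$-orthogonality into genuine probabilistic independence. The rest of the argument, namely the It\^o isometry and the orthogonality computation in the direct sum, is a routine unpacking of definitions.
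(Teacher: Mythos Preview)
The paper does not supply its own proof of this statement; it is quoted as \citet[Theorem 4.4]{hida93Gaussian} and used as a black box. Your argument is correct and is essentially the classical proof: reduce the filtration equality $\cF^{X}_{T'}=\cF^{M}_{T'}$ to equality of the first Wiener chaos subspaces $H(X,T')=H(M,T')$ via the Gaussian orthogonality--independence trick, then pull everything back through the It\^o isometry to the denseness condition on the kernels. The one point worth making explicit is that $\sigma(H(X,T'))$ and $\cF^{X}_{T'}$ agree only after completion (an $L^{2}$-limit is defined only almost surely), which is harmless here since the paper works with completed filtrations throughout.
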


\section{Causal factorization}
\label{sec-factorization}
In this section, we introduce the \emph{causal} factorization as  an analogue of Cholesky decomposition for positive operators on infinite dimensional Hilbert space.
We first recall some basic properties of Cholesky decomposition.
For any positive definite matrix \(A\in \bbR^{N\times N}\), there exists a lower triangular matrix \(L\) such that \(A=LL^{*}\).
If \(A\) is nondegenerate,  such a decomposition \(L\) is unique up to a multiplication by a diagonal matrix \(D\) with diagonal entries being \(\{1,-1\}\).

It is clear that lower triangularity is not an intrinsic property, but depends on the choice of the basis.
From a geometric viewpoint, let \(V=\bbR^{N}\) and \(V_{n}=\mathop{\mathrm{span}}\{e_{1},\dots,e_{n}\}\), where \({\bf e}=\{e_{n}\}_{n=1}^{N}\) is an orthonormal basis of \(V\).
A map \(A:V\to V\) is lower triangular (with respect to \(\bf e\)) if and only if \(V_{n}^{\bot}\) is invariant under \(A\) for any \(1\leq n\leq N\).

We focus on the covariance operator \(\Sigma\) associated to a centered, mean-square continuous, and purely nondeterministic Gaussian process \(X\), and denote the set of such operators as \(\frC(H)\).
Notice \(\frC(H)\) is a proper subset of positive trace operators on \(H\) with continuous kernel.
\begin{defn}
	Let \(\Sigma\in \frC(H)\).
	For a positive continuous measure \(\mu\) on \([0,T]\) and Hilbert--Schmidt operator \(K:(H_{\mu},\bH_{\mu})\to (H,\bH)\), we say \((K,\mu)\) is a \emph{causal} factorization of \(\Sigma\) if \(K\)  is \emph{causal} and \(\Sigma=KK^{*}\).
	We say \((K,\mu)\) is a canonical causal factorization if further \(K_{t}\) is injective for any \(t\in[0,T]\), where \(K_{t}:=P_{H_{t}}K|_{H_{\mu,t}}\).
\end{defn}

The following property justifies the causality condition as a natural extension to the lower traingularity in continuous time context.
\begin{prop}
	\label{prop:causal}
	Let \(K:(H_{\mu}, \bH_{\mu})\to (H,\bH)\) be a bounded linear operator.
	Then \(K\) is causal if and only if  \(K\) maps \(H_{\mu,t}^{\bot}\) into \(H_{t}^{\bot}\) for any \(t\in[0,T]\).
\end{prop}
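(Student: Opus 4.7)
The plan is to reduce the measure-theoretic causality condition to a purely algebraic condition on $K^{*}$, and then recover the geometric statement by Hilbert-space duality. Observe first that ``$K$ maps $H_{\mu,t}^{\bot}$ into $H_{t}^{\bot}$'' is equivalent, via the adjoint identity $\la Kg, f\ra = \la g, K^{*}f\ra_{\mu}$ together with the fact that $H_{t}$ and $H_{\mu,t}$ are closed, to the dual statement that $K^{*}$ maps $H_{t}$ into $H_{\mu,t}$. So it suffices to show that $K$ is causal if and only if $K^{*}(H_{t})\subseteq H_{\mu,t}$ for every $t\in[0,T]$.

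Since $\cH_{t}$ is generated by the continuous linear functionals $\la\cdot, f\ra$ with $f\in H_{t}$, causality of $K$ is equivalent to requiring, for every such $f$, that the composed functional $h\mapsto \la Kh, f\ra = \la h, K^{*}f\ra_{\mu}$ be $\cH_{\mu,t}$-measurable. The key lemma I would then establish is: a continuous linear functional $h\mapsto \la h, g\ra_{\mu}$ on $H_{\mu}$ is $\cH_{\mu,t}$-measurable if and only if $g\in H_{\mu,t}$. The ``if'' direction is immediate from the definition of $\cH_{\mu,t}$. For the ``only if'' direction, I would identify $\cH_{\mu,t}$ with the pullback $\sigma$-algebra $P_{H_{\mu,t}}^{-1}(\cB(H_{\mu,t}))$: the inclusion $\cH_{\mu,t}\subseteq P_{H_{\mu,t}}^{-1}(\cB(H_{\mu,t}))$ is immediate since each generating functional $\la\cdot, g\ra_{\mu}$ with $g\in H_{\mu,t}$ factors through the orthogonal projection $P_{H_{\mu,t}}$, while the reverse inclusion uses the separability of $H_{\mu,t}$ to identify its Borel $\sigma$-algebra with the $\sigma$-algebra generated by its continuous linear functionals. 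With this identification in hand, any $\cH_{\mu,t}$-measurable continuous linear functional must factor through $P_{H_{\mu,t}}$, hence must annihilate $H_{\mu,t}^{\bot}$, which forces $g\in H_{\mu,t}$.

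Combining the two steps, $K$ is causal iff $K^{*}f\in H_{\mu,t}$ for all $t$ and all $f\in H_{t}$, iff $K^{*}(H_{t})\subseteq H_{\mu,t}$, iff $K(H_{\mu,t}^{\bot})\subseteq H_{t}^{\bot}$. The main technical obstacle is the separability-based identification of $\cH_{\mu,t}$ with the projection pullback $P_{H_{\mu,t}}^{-1}(\cB(H_{\mu,t}))$; once that standard measure-theoretic fact is available, every other step is a one-line consequence of the adjoint identity in Hilbert space.
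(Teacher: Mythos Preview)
Your proposal is correct and follows essentially the same route as the paper: both reduce the problem, via the adjoint identity, to the statement $K^{*}(H_{t})\subseteq H_{\mu,t}$, and both then need the key fact that a continuous linear functional $\la\cdot,g\ra_{\mu}$ is $\cH_{\mu,t}$-measurable iff $g\in H_{\mu,t}$.

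The only difference is in how that key fact is established. The paper argues directly with a single half-space: since $\{x:\la f+g,x\ra_{\mu}\le 0\}\in\cH_{\mu,t}$ and contains $0$, and every set in $\cH_{\mu,t}$ containing $0$ is saturated under translation by $H_{\mu,t}^{\bot}$ (being generated by functionals in $H_{\mu,t}$), the whole of $H_{\mu,t}^{\bot}$ lies in the half-space, forcing the orthogonal component $f$ to vanish. You instead invoke the structural identification $\cH_{\mu,t}=P_{H_{\mu,t}}^{-1}(\cB(H_{\mu,t}))$ and conclude that the functional must factor through the projection. Both arguments encode the same insight---that $\cH_{\mu,t}$ only ``sees'' the $H_{\mu,t}$-coordinate---and neither is materially shorter; yours is slightly more systematic, the paper's slightly more self-contained.
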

\begin{proof}
	Let us first assume that \(K\) is causal.
	For \(h\in H_{t}\), we write \(K^{*}(h)=f+g\), where \(f\in H_{\mu,t}^{\bot}\) and \(g\in H_{\mu,t}\).
	We notice that from the causality of \(K\) \[K^{-1}(\{x\in H:\la h,x\ra\leq 0\})=\{x\in H_{\mu}:\la h, K(x)\ra\leq 0\}=\{x\in H_{\mu}: \la f+g, x\ra_{\mu} \leq 0\}\in\cH_{\mu,t}.\]
	Since \(\cH_{\mu,t}=\sigma(\{h\in H_{\mu}:\supp (h)\subseteq [0,t]\})\),  we derive \(H_{\mu,t}^{\bot}\subseteq  U \) for any \(U\in \cH_{\mu,t}\).
	In particular, \(H_{\mu, t}^{\bot} \subseteq \{x\in H_{\mu}: \la f+g, x\ra_{\mu} \leq 0\} \) and hence \(f=0\).
	Therefore, \(P_{H_{\mu,t}^{\bot}}K^{*}P_{H_{t}}=0\).
	Taking the adjoint on both sides, we deduce \(P_{H_{t}}KP_{H_{\mu,t}^{\bot}}=0\), i.e., \(K\) maps \(H_{\mu,t}^{\bot}\) into \(H_{t}^{\bot}\).

	On the other hand, if \(H_{\mu,t}^{\bot}\) is mapped into \(H_{t}\) under \(K\) for any \(t\in[0,T]\), then \(H_{t}^{\bot}\) is mapped into \(H_{\mu,t}\) under \(K^{*}\).
	For any \(h\in H_{t}^{\bot}\) and \(r\in \bbR\), we have
	\begin{equation*}
		K^{-1}(\{x\in H:\la h,x\ra\leq r\})=\{x\in H_{\mu}:\la h, K(x)\ra_{\mu}\leq r\}=\{x\in H_{\mu}: \la K^{*}(h), x\ra_{\mu} \leq r\}.
	\end{equation*}
	The causality follows directly from the fact that \(K^{*}(h)\in H_{\mu,t}\).
\end{proof}
\begin{rmk}
	When \(\mu=\lambda\), the set of operators \(K: H_{\mu}\to H_{\mu}\) which leaves \(H_{\mu,t}^{\bot}\) invariant forms a non-selfadjoint algebra.
	This algebra is called the nest algebra first introduced in \citet{ringrose65Algebras}, and we denote it as \(\frN(H_{\mu})\).
	The diagonal algebra \(\frD(H_{\mu})\) is a subalgebra of \(\frN(H_{\mu})\) consisting of operators \(K\) such that both \(K\) and \(K^{*}\) are in \(\frN(H_{\mu})\). See \cite{davidson88Nest} for a detailed reference.
\end{rmk}

\subsection{Existence}
We investigate the existence of causal factorization.
Similar to Cholesky decomposition, it does exist for any \(\Sigma\in \frC(H)\).
The proof is based on a factorization result \citep[Theorem 13]{anoussis98Factorisation} in nest algebra.
\begin{prop}
	\label{prop-fact}
	Let \(\mu\) be a positive continuous measure, and \(\cR_{\mu}=\{\range(A): A\in \frN(H_{\mu})\}\). For any \(A\in \frB(H_{\mu})\), there exists \(B\in\frN(H_{\mu})\) such that \(AA^{*}= BB^{*}\) if and only if \(\range(A)\in \cR_{\mu}\).
\end{prop}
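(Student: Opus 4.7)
The plan is to split the proof into the two implications and to leverage two distinct tools: Douglas' majorization/factorization theorem for the ``only if'' direction, and the cited nest-algebra factorization result \citep[Theorem~13]{anoussis98Factorisation} for the ``if'' direction.

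For the ``only if'' direction, suppose $B \in \frN(H_\mu)$ satisfies $AA^* = BB^*$. This equality of positive operators, together with Douglas' theorem applied once in each direction, yields bounded operators $C_1, C_2 \in \frB(H_\mu)$ such that $A = BC_1$ and $B = AC_2$. Consequently $\range(A) \subseteq \range(B)$ and $\range(B) \subseteq \range(A)$, so $\range(A) = \range(B)$; since $B \in \frN(H_\mu)$, this range lies in $\cR_\mu$ by definition.

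For the ``if'' direction, assume $\range(A) = \range(A_0)$ for some $A_0 \in \frN(H_\mu)$, and set $P := (AA^*)^{1/2}$. A standard consequence of Douglas' theorem, applied to the identity $P P^* = P^2 = AA^*$, gives $\range(P) = \range(A) = \range(A_0)$, so $\range(P) \in \cR_\mu$. We then feed the positive operator $P^2 = AA^*$ into \citep[Theorem~13]{anoussis98Factorisation}, whose content is precisely that a positive operator admits a factorization $BB^*$ with $B \in \frN(H_\mu)$ exactly when the range of its positive square root lies in $\cR_\mu$. This produces the required operator $B \in \frN(H_\mu)$ with $BB^* = AA^*$.

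The main obstacle is the ``if'' direction, whose substance is carried entirely by the Anoussis--Katsoulis factorization theorem; the work on my side is limited to verifying that the range condition extracted from $P = (AA^*)^{1/2}$ matches the hypothesis of that theorem verbatim in the present Hilbert-space setting. By contrast, the ``only if'' direction uses nothing about the nest structure beyond the tautological implication $B \in \frN(H_\mu) \Rightarrow \range(B) \in \cR_\mu$, so that direction is bookkeeping once Douglas' theorem is invoked.
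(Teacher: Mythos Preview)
The paper does not supply a proof of this proposition at all: it is presented as a direct restatement of \citep[Theorem~13]{anoussis98Factorisation} and then used as a black box in the proof of Theorem~\ref{thm-exist}. Your proposal is therefore strictly more detailed than the paper's treatment.

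Your ``only if'' direction via Douglas' theorem is correct and self-contained. For the ``if'' direction, observe that the statement you ascribe to \citep[Theorem~13]{anoussis98Factorisation} --- that a positive operator $T$ factors as $BB^{*}$ with $B\in\frN(H_{\mu})$ iff $\range(T^{1/2})\in\cR_{\mu}$ --- is trivially equivalent to the proposition itself: take $A=T^{1/2}$ in one direction and $T=AA^{*}$ in the other, using exactly the Douglas-theorem identity $\range(A)=\range((AA^{*})^{1/2})$ that you already invoked. So your ``if'' argument is not a genuine reduction but a restatement; the substantive content lies entirely in the cited theorem, which is precisely how the paper treats it.
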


\begin{thm}
	\label{thm-exist}
	Let \(\Sigma\in \frC(H)\). Then there exists a causal factorization \((K,\mu)\) of \(\Sigma\).
\end{thm}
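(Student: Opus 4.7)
My plan is to construct the causal factorization directly from the canonical representation of the underlying Gaussian process $X$. Applying Theorem \ref{thm-canonical} to $X$ yields the canonical representation
\[
X(t) = \sum_{n=1}^{N} \int_0^t k^n(t,s)\,\md M^n(s),
\]
where the driving martingales $M^n$ have quadratic variations $\mu^n$ satisfying $\mu^1 \gg \mu^2 \gg \cdots$.

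For the unit multiplicity case ($N=1$), I would set $\mu = \mu^1$ and define $K : H_\mu \to H$ by $Kf(t) = \int_0^t k(t,s)\,f(s)\,\mu(\md s)$. Three items need to be checked. First, $K$ is causal by Proposition \ref{prop:causal}: if $f \in H_{\mu,t}^{\bot}$ then the support condition $\supp(k(s,\cdot)) \subseteq [0,s]$ forces $Kf(s) = 0$ whenever $s \leq t$, placing $Kf$ in $H_t^{\bot}$. Second, $KK^* = \Sigma$ follows from a direct kernel computation: the kernel of $KK^*$ at $(t,s)$ equals $\int_0^{t \wedge s} k(t,r)k(s,r)\,\mu(\md r) = \E[X(t)X(s)] = R(t,s)$, matching that of $\Sigma$. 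Third, $\Sigma$ has a continuous kernel on the compact square $[0,T]^2$ and is therefore trace class, so $\|K\|_{\HS}^2 = \tr(KK^*) = \tr(\Sigma) < \infty$.

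For the higher multiplicity case ($N > 1$), I would invoke Proposition \ref{prop-fact} with $\mu = \lambda$ and the auxiliary operator $A = \Sigma^{1/2} \in \frB(H_\lambda)$, which trivially satisfies $AA^* = \Sigma$. The proposition then produces $K \in \frN(H_\lambda)$ with $KK^* = \Sigma$, and Proposition \ref{prop:causal} upgrades this to a causal factorization in the sense required. The remaining ingredient is the range condition $\range(\Sigma^{1/2}) \in \cR_\lambda$, which I would verify by leveraging the canonical representation: stacking the $N=1$ construction across the independent components produces a causal operator $A_{\mathrm{can}}: \bigoplus_n H_{\mu^n} \to H$ with $A_{\mathrm{can}} A_{\mathrm{can}}^* = \Sigma$, hence $\range(A_{\mathrm{can}}) = \range(\Sigma^{1/2})$, and the causality of $A_{\mathrm{can}}$ should translate into the required nest-compatibility of the Cameron--Martin space inside $H_\lambda$.

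The main obstacle will be precisely this last step in the higher multiplicity case: matching the multi-component filtration on $\bigoplus_n H_{\mu^n}$ with the single-component filtration on $H_\lambda$ requires an identification that intertwines the two nest structures, which is delicate because the per-time-slice dimensions differ between the two spaces. I expect this can be handled either by a time reparameterization that embeds the independent components sequentially into $[0,T]$, or by directly characterizing the subspaces of $H_\lambda$ lying in $\cR_\lambda$ and verifying the characterization for the Cameron--Martin space of any purely nondeterministic mean-square continuous Gaussian process.
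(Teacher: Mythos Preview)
Your $N=1$ argument is correct and in fact cleaner than the paper's, which does not isolate this case. The gap is in the higher-multiplicity case, and it is precisely the step you yourself flag as ``the main obstacle''.

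Two concrete issues. First, your choice $\mu=\lambda$ is problematic: the measures $\mu^{n}$ need not be absolutely continuous with respect to Lebesgue measure, so the individual kernels $k^{n}$ do not naturally define operators on $H_{\lambda}$. The paper instead takes $\mu=\lambda+\mu^{1}$, which dominates every $\mu^{n}$ (since $\mu^{1}\gg\mu^{2}\gg\cdots$), and then each $K^{n}$ becomes a bona fide element of $\frN(H_{\mu})$ via the Radon--Nikodym weight $\sqrt{\md\mu^{n}/\md\mu}$.

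Second, and more seriously, your operator $A_{\mathrm{can}}:\bigoplus_{n}H_{\mu^{n}}\to H$ is not an element of $\frN(H_{\lambda})$, so its causality does not by itself place $\range(\Sigma^{1/2})$ in $\cR_{\lambda}$. Neither of your suggested fixes works as stated: a sequential time embedding of the components destroys the causal structure (each $K^{n}$ is causal with respect to the \emph{same} time axis $[0,T]$, not a subinterval), and ``directly characterizing $\cR_{\lambda}$'' is the whole problem. The paper's resolution is to construct, inside $\frN(H_{\mu})$, a countable family of partial isometries $\{U_{n}\}$ each with dense range and with mutually orthogonal initial spaces; then $\sum_{n}K^{n}U_{n}$ is a single operator in $\frN(H_{\mu})$ whose range equals $\sum_{n}\range(K^{n})=\range(\Sigma_{\mu}^{1/2})$, after which Proposition~\ref{prop-fact} applies. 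This partial-isometry packing is the missing idea in your proposal, and it relies on the continuous nest $\{H_{\mu,t}\}$ having infinite-dimensional ``slices'' everywhere so that countably many orthogonal copies of $H_{\mu}$ can be hidden inside it while respecting the nest.
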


\begin{proof}
	Let \(X\) be a centered, mean-square continuous, and purely nondeterministic Gaussian process associated to \(\Sigma\).
	By Theorem \ref{thm-canonical}, we have a canonical representation of \(X\) given by
	\begin{equation}
		\label{eqn-rep}
		X(t)=\sum_{n=1}^{N}\int_{0}^{t} k^{n}(t,s) \md M^{n}(s).
	\end{equation}
	Recall we write \(\mu^{n}(\md t)=[M^{n}](\md t)\), and \(\mu^{1}\gg \mu^{2}\gg\cdots\).
	Let \(\mu=\lambda+\mu^{1}\).
	Since \(\Sigma\in \frC(H)\), we notice \(\Sigma\) uniquely determines a continuous kernel given by \(R(t,s)=\E[X(t)X(s)]\).
	Hence, it uniquely induces an operator \(\Sigma_{\mu}:H_{\mu}\to H_{\mu}\) given by
	\begin{equation*}
		\Sigma_{\mu}f(t)=\int_{0}^{T} f(s) R(t,s)\mu(\md s).
	\end{equation*}
	The representation \eqref{eqn-rep} yields a representation of \(\Sigma_{\mu}\) as
	\(
	\Sigma_{\mu}=\sum_{n=1}^{N} K^{n}(K^{n})^{*},
	\)
	where \(K^{n}:H_{\mu}\to H_{\mu}\) is given by
	\begin{equation*}
		K^{n}f(t)=\int_{0}^{t}k^{n}(t,s)\sqrt{\frac{\md \mu^{n}}{\md \mu}}(s)f(s)\mu(\md s).
	\end{equation*}
	In particular, \(K^{n}\in \frN(H_{\mu})\).
	If the multiplicity \(N\) was finite, then we could apply  \citet[Proposition 27]{anoussis98Factorisation} which states the sum of two factorizable operators can still be factored in the nest algebra.
	This would give us a \(K_{\mu}\in\frN(H_{\mu})\) such that \(\Sigma_{\mu}=K_{\mu}(K_{\mu})^{*}\).
	We could construct \(K:H_{\mu}\to H\) as \[Kf(t)=\int_{0}^{t}k_{\mu}(t,s)\sqrt{\frac{\md \lambda}{\md \mu}}(s)f(s)\mu(\md s),\]
	where \(k_{\mu}\) is the kernel of \(K_{\mu}\).
	Then it is direct to verify \((K,\mu)\) would be a causal factorization of \(\Sigma\).

	Now, we proceed with the case \(N=\infty\).
	The spirit of the proof aligns with \citet[Proposition 27]{anoussis98Factorisation}, but we extend it to a countable sum of operators.
	By Proposition~\ref{prop-fact}, it suffices to show \(\range({\Sigma_{\mu}^{1/2}})\in \cR_{\mu}\).
	We define \(T: \bigoplus_{n=1}^{\infty} H_{\mu}\to \bigoplus_{n=1}^{\infty} H_{\mu}\) as
	\begin{equation*}
		T(f_{1},f_{2},\dots)=\lb(\sum_{n=1}^{\infty}K^{n}f_{n},0,\dots\rb).
	\end{equation*}
	Notice that \(T\) is a bounded linear operator and \(\range(T)=\range((TT^{*})^{1/2})\) by \citet[Theorem 1]{douglas66majorization}.
	This yields \(\range(\Sigma_{\mu}^{1/2})=\sum_{n=1}^{\infty}\range(K^{n})\).
	We construct a sequence of partial isometries \(\{U_{n}\}_{n=1}^{\infty}\) in \(\frN(H_{\mu})\) with full range and mutually orthogonal initial spaces.
	Let \(e_{n}\in H_{\mu}\) with \(\supp(e_{n})\in[\frac{1}{n+1},\frac{1}{n}]\).
	We take \(\{I_{n,m}\}_{n,m=1}^{\infty}\) where  \(I_{n,m}\) are infinite and mutually disjoint subsets of \(\bbZ_{+}\).
	We define closed subspaces of \(H_{\mu}\) by \(E_{n,m}:=\overline{\mathrm{span}\{e_{k}: k\in I_{n,m}, k> m\}}\) and \(F_{m}:= \{f\in H_{\mu}: \supp(f)\subseteq [\frac{1}{m+1},\frac{1}{m}] \}\).
	In particular, \(E_{n,m}\) are mutually orthogonal.
	Since \(E_{n,m}\) is infinite dimensional, we can find a partial isometry \(P_{n,m}\in\frN(H_{\mu})\) with initial space \(E_{n,m}\) and range \(F_{m}\).
	By taking \(U_{n}= \sum_{m=1}^{\infty} P_{n,m}\), we have \(U_{n}\in \frN(H_{\mu})\) with full range and mutually orthogonal initial spaces.
	Therefore, \(\sum_{n=1}^{\infty}\range(K^{n})=\range\lb(\sum_{n=1}^{\infty}K^{n}U_{n}\rb)\in \cR_{\mu}\).
	We conclude the proof by applying Proposition~\ref{prop-fact} and notice \(\range(\Sigma_{\mu}^{1/2})\in \cR_{\mu}\).
\end{proof}

We say a process \(X\) is Gaussian Volterra if there exists a Volterra representation \(X(t)=\int_{0}^{t}k(t,s)\md M(s)\) with \(M\) a continuous Gaussian martingale with independent increments.
The above result gives a characterization of mean-square continuous Gaussian Volterra processes.
\begin{cor}
	\label{cor-volterra}
	Let \(X\) be a centered, mean-square continuous Gaussian process.
	The following statements are equivalent:
	\begin{enumerate}[label=(\roman*)]
		\item \(\cF_{0+}^{X}=\bigcap_{t>0}\cF_{t}^{X}\) is trivial.
		\item \(X\) is purely nondeterministic.
		\item There exists a Gaussian Volterra process \(\tilde{X}\) such that \(X\) and \(\tilde{X}\) share the same law.
	\end{enumerate}
\end{cor}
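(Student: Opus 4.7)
My plan is to prove the cycle (i) $\Rightarrow$ (ii) $\Rightarrow$ (iii) $\Rightarrow$ (i). For (i) $\Rightarrow$ (ii), I would observe that the $L^{2}$-closed subspace $V_{t} := \overline{\mathrm{span}}\{X(s): s \leq t\}$ sits inside the $L^{2}$-closed subspace $L^{2}(\Omega, \cF^{X}_{t}, \P)$. Hence every $Y \in \bigcap_{t>0} V_{t}$ is $\cF^{X}_{0+}$-measurable, and being an $L^{2}$-limit of centered Gaussians it is itself centered Gaussian. Triviality of $\cF^{X}_{0+}$ then forces $Y = 0$ almost surely, which is exactly the definition of pure nondeterminism.

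For the key implication (ii) $\Rightarrow$ (iii), I would apply Theorem \ref{thm-exist} to the covariance operator $\Sigma \in \frC(H)$ of $X$ to obtain a causal factorization $(K, \mu)$ with kernel $k \in L^{2}([0,T]^{2}, \mu \otimes \lambda)$. By Proposition \ref{prop:causal}, causality forces $k(t,s) = 0$ whenever $s > t$. On an auxiliary probability space, I would take a continuous centered Gaussian martingale $M$ with independent increments and $[M](t) = \mu([0,t])$ (realized, for instance, as a time-change of a Brownian motion), and define
\[
    \tilde X(t) := \int_{0}^{t} k(t,s)\, \md M(s).
\]
The Itô isometry together with the identity $\Sigma = KK^{*}$ yields $\E[\tilde X(t) \tilde X(u)] = \int_{0}^{t \wedge u} k(t,s) k(u,s)\, \mu(\md s) = R(t,u)$ for $\lambda \otimes \lambda$-almost every $(t,u)$, by expanding $\langle \Sigma f, g\rangle = \langle K^{*}f, K^{*}g\rangle_{\mu}$ and comparing with the kernel representation of $\Sigma$. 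Since Gaussian laws on $H$ are determined by their covariance operators, $\tilde X \eqdist X$.

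For (iii) $\Rightarrow$ (i), I would place $\tilde X$ on the same probability space as its driving martingale $M$, so that $\cF^{\tilde X}_{t} \subseteq \cF^{M}_{t}$. The Dambis--Dubins--Schwarz representation writes $M = B \circ [M]$ for a standard Brownian motion $B$ on a (possibly enlarged) space, giving $\cF^{M}_{0+} \subseteq \cF^{B}_{0+}$, which is trivial by Blumenthal's zero-one law. Hence $\cF^{\tilde X}_{0+}$ is trivial, and since triviality of the germ $\sigma$-algebra depends only on the law of the process (after pulling back via the canonical filtration on $H$), $X \eqdist \tilde X$ transfers the conclusion to $X$. The main obstacle is the middle step (ii) $\Rightarrow$ (iii): although Theorem \ref{thm-exist} does the analytic heavy lifting, one must carefully convert the operator-theoretic causality of $K$ into the Volterra support condition $k(t,s) = 0$ for $s > t$ via Proposition \ref{prop:causal}, and verify that the covariance computation truly recovers the continuous covariance $R$ of $X$ from the factorization, rather than merely a $\lambda \otimes \lambda$-a.e. representative.
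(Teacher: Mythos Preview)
Your proposal is correct and follows essentially the same cycle and the same key ideas as the paper: (i) $\Rightarrow$ (ii) via the inclusion $V_{t}\subseteq L^{2}(\Omega,\cF_{t}^{X},\P)$, (ii) $\Rightarrow$ (iii) by invoking Theorem~\ref{thm-exist} to produce a causal factorization and then building $\tilde X$ as a stochastic integral against a Gaussian martingale with bracket $\mu$, and (iii) $\Rightarrow$ (i) via $\cF_{0+}^{\tilde X}\subseteq \cF_{0+}^{M}$ and the deterministic time-change to Brownian motion. You are in fact somewhat more careful than the paper in spelling out the use of Proposition~\ref{prop:causal} for the Volterra support condition and in flagging the a.e.\ versus pointwise issue for the covariance kernel, but the architecture is identical.
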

\begin{proof}
	(i) \(\Rightarrow\) (ii).
	Notice that \(\overline{\mathrm{span}\{X(s):s\leq t\}}\subseteq \cF_{t}^{X}\).
	Therefore, \(\cF_{0+}^{X}\) is trivial implies that \, \(\bigcap_{t>0}\overline{\mathrm{span}\{X(s):s\leq t\}}=0\), and hence \(X\) is purely nondeterministic.

	(ii) \(\Rightarrow\) (iii).
	If \(X\) is purely nondeterministic, by Theorem \ref{thm-exist} there exists a causal factorization \((K,\mu)\) of \(\Sigma\), the covariance operator of \(X\).
	We can take a Gaussian Volterra process \(\tilde{X}(t)=\int_{0}^{t}k(t,s)\md M(s)\), where \(k\) is the kernel of \(K\), and \(M\) is a Gaussian martingale with independent increments and \(\mu(\md t)=[M](\md t)\).
	It is clear that \(\tilde{X}\) has the same covariance operator as \(X\), and hence they share the same law.

	(iii) \(\Rightarrow\) (i). \(\tilde{X}(t)=\int_{0}^{t}k(t,s)\md M(s)\) is a Gaussian Volterra process and shares the same law as \(X\).
	In particular, \(M\) is continuous Gaussian martingale with independent increments, and it is a deterministic continuous time change of the standard Brownian motion.
	Therefore, \(\cF_{0+}^{\tilde{X}}\subseteq\cF_{0+}^{M}\) is trivial, and so as \(\cF_{0+}^{X}\).


\end{proof}

On the other hand, a canonical causal factorization does not always exist.
In particular, the following result links the canonical causal factorization to Gaussian processes with unit multiplicity.
\begin{thm}
	Let \(\Sigma\in \frC(H)\) and be associated to a Gaussian process \(X\).
	The following statements are equivalent:
	\begin{enumerate}[label=(\roman*)]
		\item \(X\) is of unit multiplicity and has a canonical representation \(X(t)=\int_{0}^{t}k(t,s)\md M(s)\).
		\item \(\Sigma\) has a canonical causal factorization \((K,\mu)\).
		\item \(\Sigma\) has a causal factorization \((K,\mu)\) such that  the span of \(\{k(r,\cdot): r\in[0,t]\}\) is dense in \(H_{\mu,t}\) for any \(t\in[0,T]\), where \(k\) is the kernel of \(K\).
	\end{enumerate}
\end{thm}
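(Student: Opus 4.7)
The plan is to prove the cycle (i)~$\Rightarrow$~(ii)~$\Rightarrow$~(iii)~$\Rightarrow$~(i). The unifying idea is that a causal factorization $(K,\mu)$ of $\Sigma$ is exactly an operator-level encoding of a Volterra representation $X(t)=\int_{0}^{t}k(t,s)\,\md M(s)$, with $k$ the kernel of $K$ and $M$ a continuous Gaussian martingale satisfying $[M]=\mu$. The canonical property (injectivity of $K_{t}$) and the density condition in (iii) are two equivalent reformulations of the canonicity characterization from Theorem~\ref{thm-can}.

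For (i)~$\Rightarrow$~(ii), I take the canonical representation $X(t)=\int_{0}^{t}k(t,s)\,\md M(s)$ with $\mu=[M]$ and define $K:H_{\mu}\to H$ by $Kf(t)=\int_{0}^{t}k(t,s)f(s)\mu(\md s)$. A direct computation using Ito's isometry together with stochastic Fubini yields $\la\Sigma\phi,\phi\ra=\|K^{*}\phi\|_{\mu}^{2}$, hence $\Sigma=KK^{*}$; Hilbert--Schmidt-ness of $K$ follows from trace-class-ness of $\Sigma$, and causality of $K$ is immediate from $\supp(k(t,\cdot))\subseteq[0,t]$ via Proposition~\ref{prop:causal}. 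Injectivity of $K_{t}$ for every $t$ is precisely the canonicity characterization of Theorem~\ref{thm-can} applied with $T'=t$ and $N=1$.

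The equivalence (ii)~$\Leftrightarrow$~(iii) is a standard injectivity--density duality. For $f\in H_{\mu,t}$ and $r\in[0,t]$, the identity $K_{t}f(r)=\int_{0}^{r}k(r,s)f(s)\mu(\md s)=\la k(r,\cdot),f\ra_{\mu}$ holds because both $f$ and $k(r,\cdot)$ lie in $H_{\mu,t}$. Hence $f\in\ker K_{t}$ iff $f$ is orthogonal in $H_{\mu,t}$ to $\{k(r,\cdot):r\in[0,t]\}$, and $K_{t}$ is injective iff this family is dense in $H_{\mu,t}$.

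For (iii)~$\Rightarrow$~(i), construct a continuous Gaussian martingale $M$ with $[M]=\mu$ on some auxiliary probability space (for instance by a deterministic time change of Brownian motion) and set $Y(t):=\int_{0}^{t}k(t,s)\,\md M(s)$. By Ito's isometry the covariance of $Y$ equals $KK^{*}=\Sigma$, so $Y$ agrees in law with $X$. Applied at each $T'\in[0,T]$, the density hypothesis forces any $f\in H_{\mu}$ with $\int_{0}^{t}k(t,s)f(s)\mu(\md s)=0$ on $[0,T']$ to vanish on $[0,T']$ (test against the dense family $\{k(t,\cdot)\}_{t\in[0,T']}$ with $f\1_{[0,T']}\in H_{\mu,T'}$), so by Theorem~\ref{thm-can} the representation of $Y$ is canonical with unit multiplicity, and the uniqueness clause of Theorem~\ref{thm-canonical} transfers this structure to $X$. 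I expect the main obstacle to lie in this last identification: one must verify that the $L^{2}([0,T]^{2},\lambda\otimes\mu)$ kernel obtained from the Hilbert--Schmidt operator $K$ is literally the integrand of the stochastic Volterra integral defining $Y$, and that equality of covariance operators in $\frC(H)$ determines the law of a mean-square continuous Gaussian process.
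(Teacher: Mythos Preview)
Your proof is correct and follows essentially the same route as the paper: the paper establishes (i)$\Leftrightarrow$(ii) and (ii)$\Leftrightarrow$(iii) directly via Theorem~\ref{thm-can} and the injectivity--density duality $\ker K_t=\range(K_t^*)^\perp$, rather than closing a cycle through an auxiliary process $Y$, but the underlying ideas are identical. One point the paper makes explicit that you glide over in (ii)$\Leftrightarrow$(iii): to pass from $K_t f=0$ in $H_t$ (an a.e.\ statement in $r$) to $\langle k(r,\cdot),f\rangle_\mu=0$ for \emph{every} $r\in[0,t]$ you need continuity of $r\mapsto k(r,\cdot)\in H_\mu$, which the paper extracts from $\Sigma\in\frC(H)$ via continuity of $R(t,s)=\int k(t,r)k(s,r)\,\mu(\md r)$---this is precisely the regularity issue you anticipate at the end of your (iii)$\Rightarrow$(i) discussion.
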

\begin{proof}
	\((i)\Leftrightarrow (ii)\).
	We notice by Theorem \ref{thm-can}, \(X(t)=\int_{0}^{t}k(t,s)\md M(s) \) is a canonical representation, if and only if
	for any \(T'\in[0,T]\) and \(f\in H_{\mu}\),
	\begin{equation*}
		g(t)=\int_{0}^{t} k(t,s)f(s)\mu(\md s)=0 \text{ for all } t\in[0,T']
	\end{equation*}
	implies \(f=0\) on \([0,T']\).
	This is equivalent to the injectivity of \(K_{T'}=P_{H_{T'}}K|_{H_{\mu_{T'}}}\) for any \(T'\in[0,T ]\) where \(K\) is given by \(K f(t)=\int_{0}^{t}k(t,s)f(s) \mu(\md s)\).
	Since \(K\in \frN(H_{\mu})\), this is further equivalent to \((K,\mu)\) is a canonical causal factorization of \(\Sigma\).

	\((ii)\Leftrightarrow (iii)\). Notice \(K_{t}=P_{H_{t}}K|_{H_{\mu,t}}\) is injective if and only if the range of \(K_{t}^{*}\) is dense in \(H_{\mu,t}\).
	Since \(\Sigma\in \frC (H)\), \(R(t,s)=\E[X(t)X(s)]= \int_{0}^{t\wedge s}k(t,r)k(s,r) \mu(\md r)\) is continuous.
	This implies \(r\mapsto k(r,\cdot)\in H_{\mu}\) is continuous.
	Therefore,  \(\mathrm{range}(K_{t}^{*})= \{f(s)=\int_{0}^{t}k(r,s)g(r)\md r: g\in H_{t}\}\) is dense if and only if the span of \(\{k(r,\cdot): r\in[0,t]\}\) is dense in \(H_{\mu,t}\)
\end{proof}

\subsection{Uniqueness}

In the finite dimensional case, Cholesky decomposition is unique up to a diagonal matrix.
This is saying for nondegenerate, lower-triangular matrices \(K_1,K_2\) satisfying \(K_{1}K_{1}^{*}=K_{2}K_{2}^{*}\), there exists a diagonal matrix \(D\) such that \(K_{1}=K_{2}D\).
However, it is not the case for the causal factorization.
\begin{prop}
	\label{prop-nonuniq}
	Let \(\Sigma\in \frC(H)\) and \((K,\mu)\) be a causal factorization of \(\Sigma\).
	For any partial isometry \(U\in \frN(H_{\mu})\)  with range dense in \(H_{\mu}\), \((KU, \mu)\) is again a causal factorization of \(\Sigma\).
	Moreover, \(U\) is not necessarily in the diagonal algebra \(\frD(H_{\mu})\), i.e., \(U^{*}\) is not necessarily in \(\frN(H_{\mu})\).
\end{prop}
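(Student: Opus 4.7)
The plan is to split the argument into two parts: first, showing that $(KU,\mu)$ is indeed a causal factorization of $\Sigma$; and second, exhibiting a concrete partial isometry $U\in\frN(H_\mu)$ with dense range whose adjoint fails to lie in $\frN(H_\mu)$.

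For the first part, the key observation is that a partial isometry with dense range must in fact be a co-isometry. Indeed, the range of any partial isometry coincides with the range of the projection $UU^*$, hence is closed; combined with density, this forces $UU^*=\Id$. An immediate computation then yields $(KU)(KU)^*=K(UU^*)K^*=KK^*=\Sigma$, and Hilbert--Schmidtness of $KU$ is inherited from that of $K$. For causality, by Proposition~\ref{prop:causal} it suffices to check that $KU$ sends $H_{\mu,t}^{\bot}$ into $H_t^{\bot}$ for every $t\in[0,T]$; but $U\in\frN(H_\mu)$ sends $H_{\mu,t}^{\bot}$ into itself, after which $K$ maps it into $H_t^{\bot}$.

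For the second, more delicate part, I would construct an example on $H_\mu=L^2([0,2],\lambda;\bbR)$ via the dilation-type operator
\begin{equation*}
Uf(s):=\tfrac{1}{\sqrt{2}}\,f(s/2),\qquad s\in[0,2].
\end{equation*}
A direct calculation gives $U^*g(u)=\sqrt{2}\,g(2u)\1_{[0,1]}(u)$, so that $UU^*=\Id$ and $U^*U=P_{H_{\mu,1}}$; thus $U$ is a partial isometry with full (in particular dense) range. If $f$ is supported in $[t,2]$, then $Uf$ is supported in $[2t,2]\subseteq[t,2]$, so $U\in\frN(H_\mu)$. On the other hand, for $t\in(0,1]$ and nonzero $f$ supported in $[t,2]$, $U^*f$ is supported in $[t/2,1]$, which is not contained in $[t,2]$; hence $U^*\notin\frN(H_\mu)$, so $U\notin\frD(H_\mu)$.

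The main subtlety lies in the second part: in finite dimensions, any lower-triangular co-isometric matrix is in fact unitary and diagonal, so such a $U$ cannot exist, and the non-uniqueness of causal factorizations is genuinely an infinite-dimensional phenomenon. The example above exploits the asymmetry that the causal compression $s\mapsto s/2$ has a reverse map $u\mapsto 2u$ which looks ahead into the future, breaking the causality of $U^*$.
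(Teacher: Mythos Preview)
Your proof is correct. The first part matches the paper's argument, with you supplying more detail on why a partial isometry with dense range must satisfy $UU^*=\Id$ (closedness of the range) and explicitly invoking Proposition~\ref{prop:causal} for causality of $KU$.

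For the second part, the paper takes a different route: rather than constructing an explicit example, it simply refers back to the partial isometries $U_n$ built in the proof of Theorem~\ref{thm-exist}, which have full range but proper initial subspaces assembled from basis elements supported in the intervals $[\tfrac{1}{k+1},\tfrac{1}{k}]$; since $P_{n,m}$ maps functions supported in $[0,\tfrac{1}{m+1}]$ onto functions supported in $[\tfrac{1}{m+1},\tfrac{1}{m}]$, its adjoint moves supports to the left and so fails to lie in $\frN(H_\mu)$. Your dilation example $Uf(s)=\tfrac{1}{\sqrt{2}}f(s/2)$ is more elementary and self-contained, and makes the failure of $U^*$ to be causal completely transparent via the observation that the reverse map $u\mapsto 2u$ looks into the future. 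The paper's cross-reference buys uniformity with the measures $\mu=\lambda+\mu^1$ arising from the canonical representation, while your example buys clarity but is written for Lebesgue measure; a change of variables through the distribution function of $\mu$ would transfer it to any continuous $\mu$ of full support, though neither proof spells this reduction out.
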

\begin{proof}
	Since \(U\) is a partial isometry with a dense range, we have \(UU^{*}=\Id\) on \(H_{\mu}\).
	Therefore, \((KU,\mu)\) is a causal factorization of \(\Sigma\).
	For example, we can take \(U\) as in the proof of Theorem \ref{thm-exist}.
	And in particular, \(U\) is not diagonal.
\end{proof}

Such non-uniqueness generates non-canonical representations of the same Gaussian process.
In the following example, we include Levy's non-canonical representation of Brownian motion \citep{levy57Fonctions}.
\begin{exam}
	\label{exam-levy}
	We define a partial isometry \(U\) on \(H\) given by \(U^{*}\1_{[0,t]}(s)= [3-12(s/t)+10(s/t)^{2}]\1_{[0,t]}(s) \).
	It is direct to verify \(X(t)=\int_{0}^{t}\{3-12 (s/t)+10(s/t)^{2}\}\md B(s)\) is a Brownian motion.
	However, this representation is not canonical.
	Notice \(X(t)\) is independent of \(\int_{0}^{T}s\md B(s)\), which implies \(\cF_{T}^{X}\subsetneq \cF_{T}^{B}\).
\end{exam}

If we restrict ourselves to the canonical causal factorization, we retrieve a uniqueness result analogous to the one for Cholesky decomposition.

\begin{prop}
	\label{prop-uniq}
	Let \(\Sigma\in \frC(H)\).
	Assume \((K_{1},\mu)\) and \((K_{2},\mu)\) are two canonical causal factorizations of \(\Sigma\).
	Then,  there exists a diagonal operator \(D\in\frD(H_{\mu})\) such that \(K_{1}=K_{2}D\).
	Moreover, \(D\) is a multiplication operator given by \(D f (t)= (\1_{S}(t)-\1_{[0,T]\setminus S}(t))f(t) \) for a measurable set \(S\subseteq [0,T]\).
\end{prop}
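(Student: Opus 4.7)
The plan is to produce a unitary $U\in\frB(H_\mu)$ with $K_1=K_2U$, to show $U\in\frD(H_\mu)$, and finally to identify $U$ as a sign multiplication.
Specializing the canonicity condition to $t=T$ (where $H_T=H$ and $H_{\mu,T}=H_\mu$) gives that both $K_1$ and $K_2$ are injective, so in particular $\overline{\range K_i^*}=H_\mu$.
Since $K_1K_1^*=\Sigma=K_2K_2^*$, Douglas' majorization theorem supplies bounded operators $U,V:H_\mu\to H_\mu$ with $K_1=K_2U$ and $K_2=K_1V$; substituting back, injectivity of $K_1,K_2$ forces $UV=VU=\Id$, so $U$ is invertible with $U^{-1}=V$.
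Writing $K_1K_1^*=K_2UU^*K_2^*=K_2K_2^*$ and using that $\range K_2^*$ is dense, one gets $UU^*=\Id$, and together with $U^{-1}=V$ this yields $U^*=U^{-1}$, so $U$ is unitary.

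\textbf{Showing $U\in\frD(H_\mu)$.}
First translate canonicity into an analytic condition: since $K_i$ is causal, $K_{i,t}^*=K_i^*|_{H_t}$, and injectivity of $K_{i,t}$ is equivalent to density of $K_i^*(H_t)$ in $H_{\mu,t}$ for each $t\in[0,T]$.
Now for $g\in H_t$, causality of $K_1$ puts $K_1^*g$ into $H_{\mu,t}$; rewriting via $K_1^*=U^*K_2^*$ this means $U^*(K_2^*g)\in H_{\mu,t}$, and as $g$ varies $K_2^*g$ ranges over a dense subset of $H_{\mu,t}$, so continuity of $U^*$ forces $U^*(H_{\mu,t})\subseteq H_{\mu,t}$, i.e.\ $U\in\frN(H_\mu)$ by Proposition~\ref{prop:causal}.
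Applying the same argument to the identity $K_2=K_1U^*$, but invoking the canonicity of $(K_1,\mu)$, yields $U(H_{\mu,t})\subseteq H_{\mu,t}$, so $U^*\in\frN(H_\mu)$ and thus $U\in\frD(H_\mu)$.

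\textbf{Identifying $U$ as a sign multiplier.}
Elements of $\frD(H_\mu)$ are precisely the bounded operators commuting with each projection $P_{H_{\mu,t}}=M_{\1_{[0,t]}}$.
Because $\mu$ is continuous, the indicators $\{\1_{[0,t]}\}_{t\in[0,T]}$ generate $L^\infty(\mu)$ as a von Neumann algebra, whose commutant in $\frB(H_\mu)$ is the multiplication algebra $\{M_\phi:\phi\in L^\infty(\mu)\}$ itself (see \citet{davidson88Nest}).
Hence $U=M_\phi$ for some $\phi\in L^\infty(\mu)$; unitarity forces $|\phi|=1$ a.e., and since $H_\mu$ is a real Hilbert space, $\phi$ is real-valued and lies in $\{-1,+1\}$ a.e.
Choosing $S=\{\phi=1\}$ gives $D:=U=M_{\1_S-\1_{[0,T]\setminus S}}$ and $K_1=K_2D$ as desired.
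The main obstacle I anticipate is the identification of $\frD(H_\mu)$ with multiplication operators on $L^2(\mu)$: this rests on the maximality of the continuous nest $\{H_{\mu,t}\}$, so that the nest projections generate a maximal abelian subalgebra; once this is granted the rest of the argument is elementary operator-theoretic bookkeeping.
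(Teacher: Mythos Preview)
Your proof is correct and takes a genuinely different route from the paper's. Both arguments begin the same way: construct an orthogonal (unitary) operator $D$ with $K_1=K_2D$ using injectivity of the $K_i$ and density of $\range K_i^*$. The divergence is in identifying $D$ as a sign multiplication. The paper passes back to probability: it realizes $K_1,K_2$ as two canonical representations $X(t)=\int k_i(t,s)\,dM_i(s)$, observes that $M_1$ is an $\bF^{M_2}$-martingale, invokes the martingale representation theorem to write $M_1(t)=\int_0^t\rho(s)\,dM_2(s)$ with $\rho\in\{-1,1\}$, and then argues $\rho$ must be deterministic. You instead stay entirely in operator theory: you use the canonicity condition (density of $K_i^*(H_t)$ in $H_{\mu,t}$) together with causality to show directly that both $U$ and $U^*$ preserve each $H_{\mu,t}$, hence $U\in\frD(H_\mu)$; then you identify $\frD(H_\mu)$ with $L^\infty(\mu)$ via the MASA property of the multiplication algebra.

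Your approach is cleaner and self-contained, avoiding the stochastic-analytic detour, and it makes the membership $D\in\frD(H_\mu)$ explicit rather than emergent. The paper's approach, on the other hand, makes transparent the probabilistic content (two canonical driving martingales differ by a deterministic sign process), which feeds into the remark following the proposition and fits the overall narrative. One minor point: your citation of Proposition~\ref{prop:causal} is for the map $H_\mu\to H$, whereas you need the analogous statement for $H_\mu\to H_\mu$; this is immediate from the definition of $\frN(H_\mu)$, but worth stating rather than citing.
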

\begin{proof}
	Since \((K_{2},\mu)\) is canonical, we have \(K_{2}\) is injective and hence \(\overline{\range(K_{2}^{*})}=H_{\mu}\).
	Since \(K_{1}K_{1}^{*}=K_{2}K_{2}^{*}\), we  deduce \(K_{1}^{*}\) and \(K_{2}\) share the same null space.
	We can define an operator \(\tilde{D}\) from \(\range{K_{2}^{*}}\) to \(\range(K_{1}^{*})\) such that \(\tilde{D}(K_{2}^{*}f)=K_{1}^{*}f\).
	Moreover, \(\tilde{D}\) can be uniquely extended to an operator on \(H_{\mu}=\overline{\range(K_{2}^{*})}\).
	Therefore, by taking \(D=\tilde{D}^{*}\), we derive \(K_{1}=K_{2}D\).
	Noticing \( K_{2}DK_{1}^{*}=K_{1}K_{1}^{*}=K_{2}K_{2}^{*}\) and \(K_{2}\) is injective, we deduce \(DK_{1}^{*}=K_{2}^{*}\) and \(D^{*}=K_{1}^{-1}K_{2}\).
	This yields that \(D\) is an orthogonal operator on \(H_{\mu}\).

	Now, we consider two canonical representations induced by \(K_{1}\) and \(K_{2}\)
	\begin{equation*}
		X(t)=\int_{0}^{T}k_{1}(t,s)\md M_{1}(s)=\int_{0}^{T}k_{2}(t,s)\md M_{2}(s),
	\end{equation*}
	where \(k_{i}\) is the kernel of \(K_{i}\).
	Since  \(M_{1}\) and \(M_{2}\) generate the same filtration as \(X\), \(M_{1}\) is a \(\bF^{M_{2}}\)--martingale.
	Moreover, \(M_{2}\) is a continuous Ocone martingale with deterministic quadratic variation.
	Therefore, by martingale representation theorem \citet[Proposition]{vostrikova2007some}, we have \(M_{1}(t)=\int_{0}^{t}\rho(s)\md M_{2}(s)\) for some predictable process \(\rho(s)\) taking value in \(\{-1,1\}\).
	Together with the fact that \(K_{2}=K_{1}D^{*}\), we deduce \(k_{2}(t,\cdot)=D k_{1}(t,\cdot)=\rho(\omega,\cdot ) k_{1}(t,\cdot)\in H_{\mu}\), \(P(\md \omega)\)-a.s \(\lambda(\md t)\)-a.e.
	This implies \(\rho(\omega,\cdot)\in H_{\mu}\) is deterministic and has the form of \(\rho(s)= \1_{S}(s)-\1_{[0,T]-S}(s)\).
	Otherwise,   \(k_{1}(t,\cdot)=0\) on a positive measure set  which contradicts the injectivity of \(K_{1}\).
	Moreover, we notice the span of \(\{ k_{1}(t,\cdot): t\in[0,T]\}\) is dense in \(H_{\mu}\) as \(K_{1}\) is injective, and we conclude \(D f (t)= f(t) (\1_{S}(t)-\1_{[0,T]-S}(t))\).
\end{proof}

\begin{rmk}
	Following the same lines of arguments, we can show that all orthogonal operators \(O\)  in the nest algebra \(\frN(H_\mu)\) with \(P_{H_{\mu,t}}O|_{H_{\mu,t}}\) surjective for all \(t\in[0,T]\) are diagonal.
	This is of sharp contrast to the result of \citet{davidson98RussoDye} which shows the abundance of the unitary operators in a nest algebra on a complex Hilbert space.
	Indeed under their setting, any contraction in \(\frN(H_{\mu})\) can be represented as a finite convex combination of unitary operators, and hence there are non-diagonal unitary operators in \(\frN(H_{\mu})\).
\end{rmk}

\section{Gaussian processes}
\label{sec-Gaussian}
Before we present our main theorem, we show that we can always decompose a Gaussian process into a deterministic part and a purely nondeterministic part.
These two parts are `orthogonal', and we can calculate the adapted Wasserstein distance separately.
\begin{lem}
	For any mean-square continuous Gaussian process \(X\), there exists a decomposition \(X=Y+Z\) where \(Y\) is purely nondeterministic and \(Z\) is deterministic.
	Moreover, \(Y\) and \(Z\) are independent mean-square continuous Gaussian processes.
	The adapted Wasserstein distance between \(X_{1}\) and \(X_{2}\) can be decomposed as
	\begin{equation*}
		\AW_{2}(X_{1},X_{2})^{2}= \AW_{2}(Y_{1},Y_{2})^{2}+\W_{2}(Z_{1},Z_{2})^{2}.
	\end{equation*}
\end{lem}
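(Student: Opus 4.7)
The proof has three steps: build $Y$ and $Z$ by projecting onto the tail Hilbert space, check their basic properties together with the resulting identification of the natural filtrations, and then split the cost of any bicausal coupling into $Y$- and $Z$-parts by exploiting the conditional-independence structure imposed by bicausality.

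\textbf{Step 1 (construction).} Set $H_{t}:=\overline{\mathrm{span}\{X(s):s\in[0,t]\}}\subseteq L^{2}(\Omega,\P)$ and $H_{0}:=\bigcap_{t>0}H_{t}$. Define $Z(t):=P_{H_{0}}X(t)$ and $Y(t):=X(t)-Z(t)$. Since $P_{H_{0}}$ is a bounded orthogonal projection, $Y$ and $Z$ are centered, mean-square continuous Gaussian processes; and since $(Y(t),Z(s))$ is jointly Gaussian with $Y(t)\in H_{0}^{\perp}$ and $Z(s)\in H_{0}$, the processes $Y$ and $Z$ are independent. Applying $P_{H_{0}}$ to $H_{0}\subseteq H_{t}$ gives $H_{0}=\overline{\mathrm{span}\{Z(s):s\in[0,t]\}}$ for every $t>0$, so $Z$ is deterministic; conversely $\overline{\mathrm{span}\{Y(s):s\in[0,t]\}}\subseteq H_{t}\cap H_{0}^{\perp}$, whose intersection over $t>0$ is $\{0\}$, so $Y$ is purely nondeterministic. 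Since $Z(s)\in H_{0}\subseteq H_{t}$ for every $t>0$ we have $\sigma(Z)\subseteq\cF_{0+}^{X}$, and the identity $X(s)=Y(s)+Z(s)$ yields $\cF_{t}^{X}=\cF_{t}^{Y}\vee\sigma(Z)$ for every $t>0$.

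\textbf{Step 2 (upper bound).} For $\eps>0$, pick $\eps$-optimizers $\pi^{Y}\in\Pi_{\bc}(Y_{1},Y_{2})$ and $\pi^{Z}\in\Pi(Z_{1},Z_{2})$. Let $\pi$ be the law of $(Y_{1}+Z_{1},\,Y_{2}+Z_{2})$ in which $(Y_{1},Y_{2})\sim\pi^{Y}$, $(Z_{1},Z_{2})\sim\pi^{Z}$, and the two pairs are independent; the marginals match $(X_{1},X_{2})$ because $Y_{i}\ind Z_{i}$. Using $\cF_{t}^{X_{i}}=\cF_{t}^{Y_{i}}\vee\sigma(Z_{i})$, the $\pi$-disintegration kernel factorises into the bicausal $\pi^{Y}$-kernel and the $\cF_{0+}^{X_{i}}$-measurable $\pi^{Z}$-kernel, so $\pi\in\Pi_{\bc}(X_{1},X_{2})$. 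The independence of the two pairs kills the cross term $\E_{\pi}[\la Y_{1}-Y_{2},\,Z_{1}-Z_{2}\ra]$, whence $\E_{\pi}[\|X_{1}-X_{2}\|^{2}]=\E_{\pi^{Y}}[\|Y_{1}-Y_{2}\|^{2}]+\E_{\pi^{Z}}[\|Z_{1}-Z_{2}\|^{2}]\leq\AW_{2}(Y_{1},Y_{2})^{2}+\W_{2}(Z_{1},Z_{2})^{2}+C\eps$, and sending $\eps\downarrow 0$ gives the upper bound.

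\textbf{Step 3 (lower bound).} Take $\pi\in\Pi_{\bc}(X_{1},X_{2})$. A reverse-martingale argument at $t\downarrow 0$ applied to the conditional independence expressing bicausality yields $\cF_{0+}^{X_{3-i}}\ind\cF_{T}^{X_{i}}\mid\cF_{0+}^{X_{i}}$, and combined with $\sigma(Z_{3-i})\subseteq\cF_{0+}^{X_{3-i}}$ this gives $Y_{i}\ind Z_{3-i}\mid\cF_{0+}^{X_{i}}$. Writing $H_{s,i}:=\overline{\mathrm{span}\{X_{i}(r):r\in[0,s]\}}$ and using $P_{H_{s,i}}Y_{i}(t)=P_{H_{s,i}}X_{i}(t)-Z_{i}(t)\to 0$ as $s\downarrow 0$, another reverse-martingale step shows $\E[Y_{i}(t)\mid\cF_{0+}^{X_{i}}]=0$, so $\E_{\pi}[\la Y_{i},Z_{3-i}\ra]=0$; the diagonal cross term $\E_{\pi}[\la Y_{i},Z_{i}\ra]=0$ is a marginal statement from $Y_{i}\ind Z_{i}$. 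Hence $\E_{\pi}[\|X_{1}-X_{2}\|^{2}]=\E_{\pi}[\|Y_{1}-Y_{2}\|^{2}]+\E_{\pi}[\|Z_{1}-Z_{2}\|^{2}]$. The $Z$-term is bounded below by $\W_{2}(Z_{1},Z_{2})^{2}$ since $(Z_{1},Z_{2})_{\#}\pi\in\Pi(Z_{1},Z_{2})$. For the $Y$-term, substitute $\cF_{t}^{X_{i}}=\cF_{t}^{Y_{i}}\vee\sigma(Z_{i})$ into bicausality and apply the decomposition and weak-union rules for conditional independence to get $\cF_{t}^{Y_{3-i}}\ind\cF_{T}^{Y_{i}}\mid\cF_{t}^{Y_{i}}\vee\sigma(Z_{1},Z_{2})$; thus conditioning $\pi$ on $(Z_{1},Z_{2})$ produces a coupling $\pi^{Y}_{z_{1},z_{2}}$ that is bicausal for $(Y_{1},Y_{2})$ $\P$-a.s. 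Integrating yields $\E_{\pi}[\|Y_{1}-Y_{2}\|^{2}]\geq\AW_{2}(Y_{1},Y_{2})^{2}$, which combined with the $Z$-bound gives the desired lower bound and closes the proof.

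\textbf{Main obstacle.} The delicate point is Step 3: the coupling $\pi$ is in general not Gaussian, so orthogonality of $Y_{i}$ and $Z_{j}$ in $L^{2}(\P)$ does not transfer to $\pi$. Both the vanishing of the cross term and the bicausality of the conditional $Y$-coupling must be extracted purely from the conditional independences encoded by bicausality, together with the Gaussian-specific identity $\E[Y_{i}(t)\mid\cF_{0+}^{X_{i}}]=0$ proved via reverse-martingale convergence.
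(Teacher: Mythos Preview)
Your proof is correct and follows the paper's overall strategy: the Wold-type decomposition via projection onto the tail space (Step~1), the product-coupling upper bound (Step~2), and the vanishing of the cross terms under any bicausal coupling are all handled essentially as in the paper, up to cosmetic differences (the paper computes \(\E_\pi[Y_2\mid\cF_t^{X_2}]=(P_t-P_{0+})X_2\to 0\) directly, you phrase the same limit as a reverse-martingale statement).

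The one substantive difference is in the lower bound. After showing the cross terms vanish, the paper simply asserts \(\AW_2(X_1,X_2)^2\geq\AW_2(Y_1,Y_2)^2+\AW_2(Z_1,Z_2)^2\), whereas you supply the missing justification by conditioning on \((Z_1,Z_2)\) and using the graphoid axioms to check that the conditional coupling is bicausal for \((Y_1,Y_2)\). This is a genuine addition: the inclusion \(\Pi_{\bc}(X_1,X_2)\subseteq\Pi_{\bc}(Y_1,Y_2)\) is not automatic because \(\bF^{Y_i}\subsetneq\bF^{X_i}\), so your conditioning step fills what is arguably a gap in the paper's presentation. One small point you leave implicit: for \(\pi^Y_{z_1,z_2}\) to have the correct \(Y_i\)-marginals you also need \(Y_i\ind(Z_1,Z_2)\) under \(\pi\). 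This follows from the ingredients you already have: your observation \(\E[Y_i\mid\cF_{0+}^{X_i}]=0\) gives \(Y_i\ind\cF_{0+}^{X_i}\) by Gaussianity, and combining this with \(\sigma(Y_i)\ind\sigma(Z_{3-i})\mid\cF_{0+}^{X_i}\) via contraction yields \(Y_i\ind\sigma(Z_1,Z_2)\).
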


\begin{rmk}
	The Wasserstein distance between two Gaussian processes \(Z_{1}\) and \(Z_{2}\) is well studied (see \citet{dowson82Frechet,gelbrich90Formula}), and can be calculated explicitly given the covariance operators of \(Z_{1}\) and \(Z_{2}\).
\end{rmk}

\begin{proof}
	The first statement is a generalization of Wold decomposition to general second order stochastic processes, see \citet{cramer71Structural}.
	The deterministic process \(Z\) is given by \(Z(t)=P_{0+}X(t)\) where \(P_{t}\) is the orthogonal projection from \(L^{2}(\Omega, P)\) to the closed subspace
	\(\overline{\mathrm{span}\{X_{s}:0\leq s\leq t\}}\) and \(P_{0+}=\lim_{t\to 0+}P_{t}\).
	It is direct to verify \((X,Z)\) is jointly Gaussian and so is \((Y,Z)\).
	Therefore, the independence of \(Y\) and \(Z\) follows from the orthogonality of the projection.
	Since mean-square continuity can be preserved by the orthogonal projection, we have \(Y\) and \(Z\) are mean-square continuous.

	We proceed to show the decomposition of the adapted Wasserstein distance between \(X_{1}\) and \(X_{2}\).
	Noticing under any bicausal coupling \(\pi\in\Pi_{\bc}(X_{1},X_{2})\), \(\cF^{X_{1}}_{t}\) is conditionally independent of \(\cF^{X_{2}}_{T}\) given \(\cF^{X_{2}}_{t}\).
	This implies \(\cF^{Z_{1}}_{T}=\cF^{Z_{1}}_{t}\) is conditionally independent of \(\cF^{Y_{2}}_{T}\) given \(\cF^{X_{2}}_{t}\).
	Hence, we deduce
	\begin{align*}
		\E_{\pi}[\la Z_{1},Y_{2}\ra]= \E_{\pi}[\E_{\pi}[\la Z_{1},Y_{2}\ra|\cF^{X_{2}}_{t}]]=\E_{\pi}[\la \E_{\pi}[ Z_{1}|\cF^{X_{2}}_{t}],\E_{\pi}[Y_{2}|\cF^{X_{2}}_{t}]\ra].
	\end{align*}
	Notice that \(\E_{\pi}[Y_{2}(\cdot)|\cF_{t}^{X_{2}}]= \E_{P}[X_{2}(\cdot)-Z_{2}(\cdot)|\cF_{t}^{X_{2}}]= (P_{t}-P_{0+})X_{2}(\cdot)\).
	By Lebesgue dominated convergence theorem, we derive \(\E_{\pi}[\la Z_{1},Y_{2}\ra]=0\) by taking \(t\) to \(0\).
	Therefore, we have \(\AW_{2}(X_{1},X_{2})^{2}\geq \AW_{2}(Y_{1},Y_{2})^{2}+\AW_{2}(Z_{1},Z_{2})^{2}\).
	Finally, noticing for deterministic process \(Z_{i}\), it holds \(\cF^{Z_{i}}_{0+}=\cF^{Z_{i}}_{T}\), and hence \(\AW_{2}(Z_{1},Z_{2})=\W_{2}(Z_{1},Z_{2})\).

	For the reverse direction, we consider the optimal bicausal coupling \(\pi_{Y}(\pi_{Z})\) attaining the adapted Wasserstein distance between \(Y_{1}\) and \(Y_{2}\) (\(Z_{1}\) and \(Z_{2}\)).
	Then we construct a bicausal coupling from the independent product \(\pi_{Y}\otimes\pi_{Z}\).
	Let \(\hat{\pi}=  (Y_{1}+Z_{1},Y_{2}+Z_{2})_{\#}(\pi_{Y}\otimes \pi_{Z})\).
	By Remark~\ref{rmk-ad}, there exists \(\pi\in \Pi_{\bc}(X_{1},X_{2})\) such that \((X_{1},X_{2})_{\#}\pi= \hat{\pi}\).
	Hence, this yields  \(\AW(X_{1},X_{2})^{2}\leq \E_{\pi}[\|X_{1}-X_{2}\|^{2}]=\AW_{2}(Y_{1},Y_{2})^{2}+\W_{2}(Z_{1},Z_{2})^{2}\).

\end{proof}

\subsection{Unit multiplicity}
We present an explicit adapted Wasserstein distance formula for Gaussian processes of unit multiplicity.
\begin{thm}
	\label{thm-gp}
	Let \(X_{i}\) be a centered,  mean-square continuous, and purely nondeterministic Gaussian process of unit multiplicity, with canonical representation \(X_{i}(t)=\int_{0}^{t}k_{i}(t,s)\md M_{i}(s)\) for \(i=1,2\).
	Then, the adapted Wasserstein distance between \(X_{1}\) and \(X_{2}\) is given by
	\begin{equation}
		\label{eqn-gp1}
		\AW_{2}(X_{1},X_{2})^{2}=\int_{0}^{T}\| k_{1}(\cdot,s)\|^{2}\mu_{1}(\md s)+\int_{0}^{T}\| k_{2}(\cdot,s)\|^{2}\mu_{2}(\md s)- 2\int_{0}^{T}\lb |\la k_{1}(\cdot,s), k_{2}(\cdot, s)\ra\rb| \sqrt{\mu_{1}\mu_{2}}(\md s),
	\end{equation}
	where \(\mu_{i}(\md s)=[M_{i}](\md s)\).

	Equivalently, let \(\Sigma_{i}\) be the covariance operator of \(X_{i}\), \((K_{i},\mu_{i})\) be a canonical causal factorization of \(\Sigma_{i}\).
	We have the adapted Wasserstein distance
	\begin{equation}
		\label{eqn-gp2}
		\AW_{2}(X_{1},X_{2})^{2}=\tr(\Sigma_{1}+\Sigma_{2})-2\int_{0}^{T} \|\md H_{\mu_{1}} K_{1}^{*}K_{2}\md H_{\mu_{2}}\|_{\HS},
	\end{equation}
	where
	\begin{equation*}
		\int_{0}^{T} \|\md H_{\mu_{1}} K_{1}^{*}K_{2}\md H_{\mu_{2}}\|_{\HS}:=\lim_{\|\bP\|\to 0}\sum_{(s,t)\in \bP} \|(P_{\mu_{1},t}-P_{\mu_{1},s})K_{1}^{*}K_{2}(P_{\mu_{2},t}-P_{\mu_{2},s})\|_{\HS},
	\end{equation*}
	and \(P_{\mu_{i},t}\) denotes the projection of \(H_{\mu_{i}}\) to the subspace \(H_{\mu_{i},t}=\{f\in H_{\mu_{i}}:\supp(f)\subseteq[0,t]\}\).
	Here, the limit is taken over all partitions \(\bP\) of \([0,T]\) with mesh size \(\|\bP\|\) converging to 0.
\end{thm}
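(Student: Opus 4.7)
The strategy is to invoke the transfer principle. Since each $(K_{i},\mu_{i})$ is a canonical causal factorization of $\Sigma_{i}$, the canonical representation $X_{i}(t)=\int_{0}^{t}k_{i}(t,s)\md M_{i}(s)$ gives $\bF^{X_{i}}=\bF^{M_{i}}$ by Theorem~\ref{thm-canonical}(iv), hence $\Pi_{\bc}(X_{1},X_{2})=\Pi_{\bc}(M_{1},M_{2})$. Expanding the squared $L^{2}$ cost yields
\begin{equation*}
	\AW_{2}(X_{1},X_{2})^{2}=\tr(\Sigma_{1})+\tr(\Sigma_{2})-2\sup_{\pi\in\Pi_{\bc}(M_{1},M_{2})}\int_{0}^{T}\E_{\pi}[X_{1}(t)X_{2}(t)]\md t,
\end{equation*}
so the problem reduces to computing and maximizing the cross term over bicausal couplings of the driving martingales, which have a much simpler independent-increment structure than $X_{1},X_{2}$.

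The core structural lemma is that, under any $\pi\in\Pi_{\bc}(M_{1},M_{2})$, the independent-increment property combined with bicausality forces cross-increments over disjoint intervals to be orthogonal in $L^{2}(\pi)$: for $s<t\leq q<r$,
\begin{equation*}
	\E_{\pi}\bigl[(M_{1}(t)-M_{1}(s))(M_{2}(r)-M_{2}(q))\bigr]=\E_{\pi}\bigl[(M_{1}(t)-M_{1}(s))\,\E_{\pi}[M_{2}(r)-M_{2}(q)\mid \cF^{M_{2}}_{q}]\bigr]=0,
\end{equation*}
where the conditional-independence clause of bicausality drops $\cF^{M_{1}}$ from the conditioning. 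It follows that $\Delta\mapsto \E_{\pi}[(M_{1}(t)-M_{1}(s))(M_{2}(t)-M_{2}(s))]$ is finitely additive on intervals, and mean-square continuity of the $M_{i}$ extends it to a signed measure $\nu_{\pi}$ on $[0,T]$. A Cauchy--Schwarz bound on each increment together with Lebesgue differentiation with respect to $\mu_{1}+\mu_{2}$ gives $|\nu_{\pi}|\leq\sqrt{\mu_{1}\mu_{2}}$, so $\nu_{\pi}(\md s)=\rho_{\pi}(s)\sqrt{\mu_{1}\mu_{2}}(\md s)$ for some $\rho_{\pi}\colon[0,T]\to[-1,1]$. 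A Riemann-sum approximation of the Wiener integrals $X_{i}(t)=\int_{0}^{t}k_{i}(t,s)\md M_{i}(s)$ and Fubini's theorem then yield
\begin{equation*}
	\int_{0}^{T}\E_{\pi}[X_{1}(t)X_{2}(t)]\md t=\int_{0}^{T}\rho_{\pi}(s)\,\la k_{1}(\cdot,s),k_{2}(\cdot,s)\ra\,\sqrt{\mu_{1}\mu_{2}}(\md s).
\end{equation*}

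Pointwise maximization over $\rho_{\pi}(s)\in[-1,1]$ produces \eqref{eqn-gp1}, with optimal choice $\rho^{*}(s)=\operatorname{sgn}\la k_{1}(\cdot,s),k_{2}(\cdot,s)\ra$. To construct an attaining bicausal coupling, represent $(M_{1},M_{2})$ on a common probability space through three independent Gaussian white noises with deterministic coefficients engineered so that the marginal quadratic variations are $\mu_{1},\mu_{2}$ and the cross-variation is exactly $\rho^{*}\sqrt{\mu_{1}\mu_{2}}$; bicausality follows from the Gaussian structure and the fact that each $M_{i}$ is adapted to its designated pair of noises, and the coupling is lifted to a bicausal coupling of $(X_{1},X_{2})$ via Remark~\ref{rmk-ad}. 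For the operator form \eqref{eqn-gp2}, I would observe that $K_{1}^{*}K_{2}\colon H_{\mu_{2}}\to H_{\mu_{1}}$ has Hilbert--Schmidt kernel $(u,v)\mapsto\la k_{1}(\cdot,u),k_{2}(\cdot,v)\ra$, so that the block $(P_{\mu_{1},t}-P_{\mu_{1},s})K_{1}^{*}K_{2}(P_{\mu_{2},t}-P_{\mu_{2},s})$ has HS-norm $\bigl(\int_{s}^{t}\!\int_{s}^{t}|\la k_{1}(\cdot,u),k_{2}(\cdot,v)\ra|^{2}\mu_{1}(\md u)\mu_{2}(\md v)\bigr)^{1/2}$. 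Continuity of $s\mapsto k_{i}(\cdot,s)\in H_{\mu_{i}}$ together with the definition of the geometric-mean measure then shows the partition sum converges to $\int_{0}^{T}|\la k_{1}(\cdot,s),k_{2}(\cdot,s)\ra|\sqrt{\mu_{1}\mu_{2}}(\md s)$ as the mesh tends to zero.

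The main technical obstacle I expect is the rigorous extraction of the signed measure $\nu_{\pi}$ and the bound $|\nu_{\pi}|\leq\sqrt{\mu_{1}\mu_{2}}$ for a general, non-Gaussian bicausal coupling: one cannot invoke the semimartingale cross-variation theory, and must argue directly with the bilinear form on increments using only mean-square continuity and the conditional-independence clauses in bicausality. A secondary subtlety is verifying that the explicit synchronous-type optimizer constructed on the auxiliary probability space is bicausal as a coupling of $(X_{1},X_{2})$ rather than merely in the ambient filtration; this is handled by the lifting procedure of Remark~\ref{rmk-ad}.
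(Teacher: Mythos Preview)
Your overall strategy matches the paper's: transfer principle via $\bF^{X_{i}}=\bF^{M_{i}}$, reduce to maximizing the cross term over $\Pi_{\bc}(M_{1},M_{2})$, parametrize the cross-variation by a density $\rho\in[-1,1]$ with respect to $\sqrt{\mu_{1}\mu_{2}}$, optimize pointwise, and exhibit a Gaussian coupling that attains the bound. The treatment of \eqref{eqn-gp2} is also essentially the same.

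The one genuine methodological divergence is in how the cross-variation structure is obtained. You build the signed measure $\nu_{\pi}$ by hand from orthogonality of disjoint cross-increments, Cauchy--Schwarz, and Lebesgue differentiation, and you flag this as the main technical obstacle, writing that ``one cannot invoke the semimartingale cross-variation theory''. In fact one can, and the paper does: under any bicausal coupling $\pi$, each $M_{i}$ remains a martingale with respect to the joint filtration $\bF^{M_{1}}\vee\bF^{M_{2}}$ (this is cited from \citet[Remark~2.3]{acciaio20Causal}). Hence the quadratic covariation $[M_{1},M_{2}]$ exists under $\pi$ as an ordinary semimartingale bracket, and the Kunita--Watanabe inequality gives $[M_{1},M_{2}](\md s)=\rho(s)\sqrt{\mu_{1}\mu_{2}}(\md s)$ with $|\rho|\leq 1$ immediately. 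The cross term is then $\E_{\pi}\bigl[\int_{0}^{T}\langle k_{1}(\cdot,s),k_{2}(\cdot,s)\rangle\,[M_{1},M_{2}](\md s)\bigr]$ by the It\^o isometry and Fubini, with no Riemann-sum approximation needed. So what you anticipate as the hard part dissolves once the martingale-preservation fact for bicausal couplings is invoked; your more elementary construction would work but is unnecessary.
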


\begin{rmk}
	The distance does not depend on the choice of the canonical representation.
	Indeed, if \(k_{1}\) and \(\tilde{k}_{1}\) are kernels of two canonical representations of \(X_{1}\), by Proposition \ref{prop-uniq} we have \(\tilde{k}_{1}(\cdot,s)=k_{1}(\cdot,s)(\1_{S}(s)- \1_{[0,T]\setminus S}(s))\).
	Hence, plugging \(\tilde{k}_{1}\) into \eqref{eqn-gp1} does not change its value.
\end{rmk}

\begin{rmk}
	One shall not expect to relax the condition of the canonical representation.
	We consider the noncanonical representation of Brownian motion given in Example \ref{exam-levy}.
	Naively plugging in the formula, we would obtain a positive quantity for the adapted Wasserstein distance between two standard Brownian motions.
\end{rmk}

\begin{rmk}
	Although we focus on mean-square continuous Gaussian processes, the proof can be easily adapted to  the discrete-time case.
	Moreover, \eqref{eqn-gp2} is consistent with the discrete-time result given in \citet{gunasingam25Adapteda}.
	In discrete-time case, the triangular integral \(\int_{0}^{T} \|\md H_{\mu_{1}} K_{1}^{*}K_{2}\md H_{\mu_{2}}\|_{\HS}\) can be interpreted as the sum of the diagonal elements of \(K_{1}^{*}K_{2}\).
	Here, the notation of triangular integral is adapted from the literature of nest algebra, e.g., \citet{davidson88Nest}.
\end{rmk}

\begin{proof}[Proof of Theorem \ref{thm-gp}]
	Since \(\bF^{X_{i}}=\bF^{M_{i}}\), by definition we obtain \(\Pi_{\bc}(X_{1},X_{2})=\Pi_{\bc}(M_{1},M_{2})\).
	We apply the transfer principle and derive that
	\begin{align*}
		\sup_{\pi\in \Pi_{\bc}(X_{1},X_{2})} \E_{\pi}[\la X_{1} ,X_{2}\ra] & = \sup_{\pi\in \Pi_{\bc}(M_{1},M_{2})}\E_{\pi}[\la X_{1} ,X_{2}\ra]                                                             \\
		                                                                   & =\sup_{\pi\in \Pi_{\bc}(M_{1},M_{2})}  \E_{\pi}\lb[\int_{0}^{T} \int_{0}^{T}k_{1}(t,s)k_{2}(t,s) [M_{1},M_{2}](\md s)\md t\rb].
	\end{align*}
	The second equality follows from the fact that \(M_{1}\) and \(M_{2}\) remain martingales with respect to the product filtration under any bicausal coupling, see \citet[Remark 2.3]{acciaio20Causal}.
	By Fubini theorem and Kunita--Watanabe inequality, we derive
	\begin{align*}
		\sup_{\pi\in \Pi_{\bc}(X_{1},X_{2})} \E_{\pi}[\la X_{1} ,X_{2}\ra] & =\sup_{\pi\in \Pi_{\bc}(M_{1},M_{2})}  \E_{\pi}\lb[\int_{0}^{T} \int_{0}^{T}k_{1}(t,s)k_{2}(t,s) \md t [M_{1},M_{2}](\md s)\rb]             \\
		                                                                   & =\sup_{\rho(\cdot)\in[-1,1]}\E_{\pi}\lb[\int_{0}^{T}\lb(\int_{0}^{T} k_{1}(t,s)k_{2}(t,s)\md t \rb)\rho(s) \sqrt{\mu_{1}\mu_{2}}(\md s)\rb] \\
		                                                                   & =\int_{0}^{T}\lb|\la k_{1}(\cdot ,s),k_{2}(\cdot ,s)\ra\rb| \sqrt{\mu_{1}\mu_{2}}(\md s).
	\end{align*}
	The second equality follows from the fact that \( \sqrt{\mu_{1}\mu_{2}} \gg[M_{1},M_{2}]\) and the Radon--Nikodym density \(\rho\) takes values in \([-1,1]\).
	Moreover, the optimal bicausal coupling is induced by a Gaussian coupling \[M_{1}(t)=\int_{0}^{t}\sqrt{\frac{\md \mu_{1}}{\md (\mu_{1}+\mu_{2})}}(s)\md \tilde{M}(s) \quad\text{and } M_{2}(t)=\int_{0}^{t}\sqrt{\frac{\md \mu_{2}}{\md (\mu_{1}+\mu_{2})}}(s)\rho(s)\md \tilde{M}(s),\] where  \(\rho\) attains the supremum in the above estimate and \(\tilde{M}\) is a Gaussian martingale with independent increments, \([\tilde{M}](\md s)=(\mu_{1}+\mu_{2})(\md s)\).

	For \eqref{eqn-gp2}, we fix \(s,t \in[0,T]\).
	We notice  for any \(f\in H_{\mu_{2}}\)
	\begin{align*}
		(P_{\mu_{1},t}-P_{\mu_{1},s})K_{1}^{*}K_{2}(P_{\mu_{2},t}-P_{\mu_{2},s})f (r_{1}) =\int_{s}^{t}\la k_{1}(\cdot,r_{1 }),k_{2}(\cdot,r_{2})\ra f(r_{2})\mu_{2}(\md r_{2}).
	\end{align*}
	This gives
	\begin{equation*}
		\|(P_{\mu_{1},t}-P_{\mu_{1},s})K_{1}^{*}K_{2}(P_{\mu_{2},t}-P_{\mu_{2},s})\|_{\HS} = \lb[\int_{s}^{t}\int_{s}^{t}|\la k_{1}(\cdot, r_{1}),k_{2}(\cdot,r_{2})\ra|^{2}\mu_{1}(\md r_{1})\mu_{2}(\md r_{2} )\rb]^{1/2}.
	\end{equation*}
	Since \(X_{i}\) is mean-square continuous, we have \((r_{1},r_{2})\mapsto \la k_{1}(\cdot, r_{1}),k_{2}(\cdot,r_{2})\ra\) is uniformly continuous on \([0,T]\times [0,T]\).
	This allows us to conclude
	\begin{equation*}
		\int_{0}^{T} \|\md H_{\mu_{1}} K_{1}^{*}K_{2}\md H_{\mu_{2}}\|_{\HS}= \int_{0}^{T}\lb|\la k_{1}(\cdot ,s),k_{2}(\cdot ,s)\ra\rb| \sqrt{\mu_{1}\mu_{2}}(\md s).
	\end{equation*}
\end{proof}

We give several examples.

\begin{exam}
	We consider the adapted Wasserstein distance between a standard Brownian motion \(B\) and a Cantor Gaussian martingale \(C\).
	The covariance operator of the Cantor Gaussian martingale \(C\) is given by \(E[C(t)C(s)]=F(t\wedge s)\), where \(F\) is the Cantor function, also known as the Devil's staircase.
	In particular, \(F(\md t)\) is mutually singular to the Lebesgue measure.
	This implies that under any bicausal coupling \(B(t)\) and \(C(t)\) are uncorrelated which gives
	\begin{equation*}
		\AW_{2}(B,C)^{2}=\int_{0}^{T}(t+ F(t))\md t.
	\end{equation*}
	In fact, every bicausal coupling attains the adapted Wasserstein distance.
	On the other hand, one can easily construct a non-bicausal coupling by the time change of Brownian motion under which \(B\) and \(C\) are not independent anymore and have a transport cost strictly less than \(\int_{0}^{T}(t+F(t))\md t\).

\end{exam}

\begin{exam}
	We consider the adapted Wasserstein distance between two fractional Brownian motions.
	For a fractional Brownian motion \(B_{H}\) with Hurst parameter \(H\), it has a stochastic representation given by
	\begin{equation*}
		B_{H}(t)=\int_{0}^{t}k_{H}(t,s)\md B(s),
	\end{equation*}
	where  \(k_{H}\) is the Molchan--Golosov kernel, see \citet{molchan69Gaussian,decreusefond99Stochastic} for example.
	In particular, this gives a canonical representation of \(B_{H}\), see \citet[Theorem 5.1]{jost06Transformation}.
	Therefore, plugging this canonical representation into Theorem \ref{thm-gp}, we obtain Theorem \ref{thm-fb} and have
	\begin{equation*}
		\AW_{2}(B_{H_{1}},B_{H_{2}})^{2}=\int_{0}^{T}\int_{0}^{T}(k_{H_{1}}(t,s)-k_{H_{2}}(t,s))^{2}\md t\md s=\|K_{H_{1}}-K_{H_{2}}\|_{\HS}^{2}.
	\end{equation*}
	We remark that the synchronous coupling is the unique optimal bicausal coupling.

\end{exam}

\begin{exam}
	We consider the adapted Wasserstein distance between fractional Ornstein--Uhlenbeck processes given by
	\begin{equation*}
		X_{i}(0)=x_{i}- \lambda_{i}\int_{0}^{t}X_{i}(s)\md s + B_{H_{i}}(t),
	\end{equation*}
	whose solution is given by
	\begin{equation*}
		X_{i}(t)=\me^{-\lambda_{i} t} x_{i} + \int_{0}^{t}\me^{\lambda_{i}(s-t)}\md B_{H_{i}}(s).
	\end{equation*}
	Let \(\tilde{X}_{i}(t)=X_{i}(t)-\me^{-\lambda_{i} t} x_{i}\).
	Then, \(\tilde{X}_{i}\) is a centered Gaussian process, and \[\AW_{2}(X_{1},X_{2})^{2}=\AW_{2}(\tilde{X}_{1},\tilde{X}_{2})^{2}+\int_{0}^{T}|\me^{-\lambda_{1} t} x_{1}-\me^{-\lambda_{2} t} x_{2}|^{2}\md t.\]
	By \citet[Proposition A.1]{cheridito03Fractional}, we can show \(\tilde{X}_{i}\) is of unit multiplicity and with a canonical representation given by
	\begin{align*}
		\tilde{X}_{i}(t)= \int_{0}^{t}\me^{\lambda_{i}(s-t)}\md B_{H_{i}}(s) & = \int_{0}^{t}\lb( k_{H_{i}}(t,s)+\int_{s}^{t}\me^{\lambda_{i}(t-r)}k_{H_{i}}(r,s)\md r\rb ) \md B(s) \\
		                                                                     & :=\int_{0}^{t}k_{OU_{i}}(t,s)\md B(s).
	\end{align*}
	By Theorem \ref{thm-gp}, we derive \(\AW_{2}(\tilde{X}_{1},\tilde{X}_{2})^{2}=\int_{0}^{T}\int_{0}^{T}(k_{OU_{1}}(t,s)-k_{OU_{2}}(t,s))^{2}\md t\md s\) as \(k_{OU_{i}}\geq 0\).
\end{exam}

\subsection{Higher multiplicity}
We can also extend the result to the case of higher multiplicity.
\begin{thm}
	\label{thm-gp2}
	Let \(X_{1}\) and \(X_{2}\) be two centered,  mean-square continuous, and purely-nondeterministic Gaussian processes with canonical representations \[X_{1}(t)=\sum_{i=1}^{m}\int_{0}^{t}k_{1}^{i}(t,s)\md M_{1}^{i}(s) \quad \text{ and } \quad X_{2}(t)=\sum_{j=1}^{n}\int_{0}^{t}k_{2}^{j}(t,s)\md M_{2}^{j}(s).\]
	Then, the adapted Wasserstein distance between \(X_{1}\) and \(X_{2}\) is given by
	\begin{align*}
		\AW_{2}(X_{1},X_{2})^{2} & = \sum_{i=1}^{m}\int_{0}^{T}\| k_{1}^{i}(\cdot,s)\|^{2}\mu_{1}^{i}(\md s) +\sum_{j=1}^{n}\int_{0}^{T} \|k_{2}^{j}(\cdot,s)\|^{2}\mu_{2}^{j}(\md s) \\
		                         & \qquad -2 \int_{0}^{T}\lb\| \la \tilde{k}_{1}^{i}(\cdot, s), \tilde{k}_{2}^{j}(\cdot, s)\ra_{i,j}\rb\|_{\tr} \sqrt{\mu_{1}^{1}\mu_{2}^{1}}(\md s),
	\end{align*}
	where \(\tilde{k}_{1}^{i}(\cdot, s)=\sqrt{\dfrac{\md\mu_{1}^{i}}{\md\mu_{1}^{1}}}(s)k_{1}^{i}(\cdot,s)\) and \(\tilde{k}_{2}^{j}(\cdot, s)=\sqrt{\dfrac{\md\mu_{2}^{j}}{\md \mu_{2}^{1}}}(s)k_{2}^{j}(\cdot,s)\).

\end{thm}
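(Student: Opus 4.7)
The plan is to follow the same pipeline as in the proof of Theorem~\ref{thm-gp}, with the scalar Kunita--Watanabe step upgraded to a matrix version so that the trace (nuclear) norm appears naturally. As before it suffices to evaluate $\sup_{\pi\in\Pi_{\bc}(X_1,X_2)}\E_\pi[\la X_1,X_2\ra]$, the remaining terms in $\|X_1-X_2\|^2$ being fixed by the marginals.

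First, by the transfer principle, $\bF^{X_i}=\bF^{M_i}$ (with $M_i=(M_i^1,\dots)$) gives $\Pi_{\bc}(X_1,X_2)=\Pi_{\bc}(M_1,M_2)$. Expanding the double integral and using stochastic Fubini as in Theorem~\ref{thm-gp}, together with the fact that under any bicausal $\pi$ each $M_r^i$ remains a martingale for the product filtration, one gets
\[
\E_\pi[\la X_1,X_2\ra]=\sum_{i=1}^m\sum_{j=1}^n \E_\pi\!\left[\int_0^T\!\!\int_0^T k_1^i(t,s)k_2^j(t,s)\,dt\;[M_1^i,M_2^j](ds)\right].
\]
Since the marginal laws are preserved, $M_1^i\perp M_1^{i'}$ for $i\ne i'$ and similarly on the $M_2$-side, so the quadratic covariation matrix of $(M_1,M_2)$ has block structure with diagonal blocks $\operatorname{diag}(\mu_1^i)$ and $\operatorname{diag}(\mu_2^j)$ and off-diagonal block $C(ds)=([M_1^i,M_2^j](ds))_{i,j}$.

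The key step is the matrix Kunita--Watanabe bound. Positive semi-definiteness of the matrix-valued measure $d[(M_1,M_2),(M_1,M_2)]$, together with the block diagonal form above, implies that if $\nu$ is any common dominating measure (e.g.\ $\nu=\mu_1^1+\mu_2^1$) with densities $a_i=d\mu_1^i/d\nu$, $b_j=d\mu_2^j/d\nu$, then we can write $C_{ij}(s)=\sqrt{a_i(s)b_j(s)}\,\tilde C_{ij}(s)\,\nu(ds)$ where $\tilde C(s)$ is an $m\times n$ matrix of operator norm at most $1$ for $\nu$-a.e.\ $s$. Plugging this in and setting $\alpha_{ij}(s):=\la k_1^i(\cdot,s),k_2^j(\cdot,s)\ra$ and $B_{ij}(s):=\alpha_{ij}(s)\sqrt{a_i(s)b_j(s)}$,
\[
\E_\pi[\la X_1,X_2\ra]=\int_0^T \operatorname{tr}\!\bigl(B(s)^{\!\top}\tilde C(s)\bigr)\,\nu(ds).
\]
The optimization over $\tilde C$ is now pointwise and uses the duality between operator norm and nuclear norm: $\sup_{\|\tilde C\|_{\op}\le 1}\operatorname{tr}(B^{\!\top}\tilde C)=\|B\|_{\tr}$, attained by a partial isometry from the singular value decomposition of $B$. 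Factoring the common measure by pulling out $\sqrt{a_1(s)b_1(s)}$ (absorbing $\sqrt{a_i/a_1}$, $\sqrt{b_j/b_1}$ into $\tilde k_1^i,\tilde k_2^j$) yields $\sqrt{a_1 b_1}\,\nu(ds)=\sqrt{\mu_1^1\mu_2^1}(ds)$ and the required formula
\[
\sup_{\pi}\E_\pi[\la X_1,X_2\ra]=\int_0^T \bigl\|(\la \tilde k_1^i(\cdot,s),\tilde k_2^j(\cdot,s)\ra)_{i,j}\bigr\|_{\tr}\,\sqrt{\mu_1^1\mu_2^1}(ds).
\]

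For attainability, I would exhibit an explicit bicausal Gaussian coupling that realizes the bound. Let $\tilde C^\star(s)$ be a measurable selection of optimal contractions (Borel measurability of the SVD-type construction should follow from standard measurable selection arguments), and build a joint continuous Gaussian martingale whose quadratic covariation matrix is exactly the one constructed above; concretely, drive both $M_1$ and $M_2$ by a common $(m+n)$-dimensional Gaussian martingale with independent increments (time-changed Brownian motion) as in Theorem~\ref{thm-gp}, with deterministic coefficients chosen from the Cholesky factor of the target matrix. The main obstacle will be the matrix Kunita--Watanabe parameterization of step three: one must verify that every bicausal coupling produces a $\tilde C(s)$ with $\|\tilde C(s)\|_{\op}\le 1$ $\nu$-a.e., and conversely, that every such $\tilde C^\star(s)$ can be realized as the covariation structure of some bicausal coupling. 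Once this correspondence is established, the rest follows by pointwise duality and the standard rewriting used in the unit multiplicity case.
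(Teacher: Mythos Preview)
Your proposal is correct and follows essentially the same route as the paper. The paper packages the pointwise optimization as a separate algebraic lemma (Lemma~\ref{lem-tr}): for $\begin{psmallmatrix} A & \Gamma \\ \Gamma^{*} & B\end{psmallmatrix}\geq 0$ one has $\tr(C\Gamma^{*})\leq \|A^{1/2}CB^{1/2}\|_{\tr}$, with equality at $\Gamma=A^{1/2}UVB^{1/2}$ from the SVD of $A^{1/2}CB^{1/2}$; your parameterization $\Gamma=A^{1/2}\tilde C B^{1/2}$ with $\|\tilde C\|_{\op}\le 1$ and the operator/nuclear norm duality is exactly the same statement in different clothing, and the paper's explicit Gaussian coupling construction matches your attainability argument.
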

\begin{rmk}
	We point out that even though the  Gaussian process \(X_{1}\) is one-dimensional, its natural filtration is `multi-dimensional'.
	Indeed, we can use \(X_{1}\) to reconstruct a multi-dimensional Gaussian martingale \(M_{1}=(M_{1}^{1},\dots, M_{1}^{m})\) with independent components, sharing the same natural filtration as \(X_{1}\).
	Hence, the adapted Wasserstein distance between higher multiplicity Gaussian processes is similar to the discrete-time multi-dimensional case \citep{acciaio24Entropica} where a trace norm is present.
	In the same fashion, one can derive the adapted Wasserstein distance between multi-dimensional Gaussian processes with arbitrary multiplicity.
	For brevity, we only present the one-dimensional case.
\end{rmk}
\begin{rmk}
	Gaussian processes with higher multiplicity do exist in theory, although they are mostly pathological and not common in practice.
	For example, the independent sum of a standard Brownian motion and a fractional Brownian motion with \(H>3/4\) is equivalent to a standard Brownian motion \citep{cheridito01Mixed}, and hence the mixture is still a Gaussian process of unit multiplicity \citet[Theorem 6.3]{hida93Gaussian}.
	In \citet[Chapter 4]{hida93Gaussian}, a Gaussian process with multiplicity 2 is constructed explicitly by taking \(X(t)=B_{1}(t)+F(t)B_{2}(t)\), where \(B_{1}\), \(B_{2}\) are independent standard Brownian motions, and \(F'\) is integrable but \(F\) is nowhere locally square integrable.
\end{rmk}
The following is an elementary algebraic lemma which we require for the proof of Theorem \ref{thm-gp2}.
\begin{lem}
	\label{lem-tr}
	Let \(A\in\bbR^{m\times m}\), \(B\in \bbR^{n\times n}\), and  \(C\in \bbR^{m\times n}\).
	Assume \(A\) and \(B\) are  semi-positive definite.
	Then, for any \(\Gamma\in \bbR^{m\times n}\) such that  \(\begin{psmallmatrix} A & \Gamma \\ \Gamma^{*} & B\end{psmallmatrix}\geq 0\) we have
	\begin{equation*}
		\tr(C\Gamma^{*})\leq \|A^{1/2} C B^{1/2}\|_{\tr}.
	\end{equation*}
	Moreover, the equality can be attained by \(\Gamma= A^{1/2}UVB^{1/2}\) where \(U\) and \(V\) are given by the singular value decomposition \(A^{1/2}CB^{1/2}=U\Sigma V\).
\end{lem}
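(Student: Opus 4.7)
The plan is to reduce the inequality to the classical duality between the trace norm and the operator norm via a standard characterization of positive semidefinite block matrices.

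First, I would invoke the factorization: for $A\succeq 0$ and $B\succeq 0$, the block matrix $\begin{psmallmatrix} A & \Gamma \\ \Gamma^* & B \end{psmallmatrix}$ is positive semidefinite if and only if there exists a contraction $R\in\bbR^{m\times n}$ (i.e.\ $\|R\|_\op\leq 1$) with $\Gamma=A^{1/2} R B^{1/2}$. The ``only if'' direction is obtained by writing the block matrix as $XX^*$ with $X=(X_1^*,X_2^*)^*$, factoring $X_1=A^{1/2}V_1$ and $X_2=B^{1/2}V_2$ through partial isometries, and setting $R=V_1 V_2^*$; the ``if'' direction follows from the identity
\begin{equation*}
\begin{pmatrix} A & A^{1/2}RB^{1/2} \\ B^{1/2}R^*A^{1/2} & B \end{pmatrix} = \begin{pmatrix} A^{1/2} & 0 \\ 0 & B^{1/2} \end{pmatrix}\begin{pmatrix} I_m & R \\ R^* & I_n \end{pmatrix}\begin{pmatrix} A^{1/2} & 0 \\ 0 & B^{1/2} \end{pmatrix}
\end{equation*}
together with the fact that $\begin{psmallmatrix} I & R \\ R^* & I \end{psmallmatrix}\succeq 0$ whenever $\|R\|_\op\leq 1$.

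Second, given this representation, cyclicity of the trace yields $\tr(C\Gamma^*)=\tr(CB^{1/2}R^*A^{1/2})=\tr\bigl((A^{1/2}CB^{1/2})R^*\bigr)$, and the trace--operator norm duality $|\tr(MR^*)|\leq \|M\|_\tr\|R\|_\op$ applied with $M:=A^{1/2}CB^{1/2}$ gives the desired inequality $\tr(C\Gamma^*)\leq \|A^{1/2}CB^{1/2}\|_\tr$. For the equality case I would take the reduced singular value decomposition $A^{1/2}CB^{1/2}=U\Sigma V$ with $U\in\bbR^{m\times r}$, $V\in\bbR^{r\times n}$ satisfying $U^*U=I_r=VV^*$ and $\Sigma\in\bbR^{r\times r}$ diagonal containing the nonnegative singular values, so that $\tr(\Sigma)=\|A^{1/2}CB^{1/2}\|_\tr$. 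Setting $R:=UV$ gives a partial isometry, hence a contraction, and therefore $\Gamma:=A^{1/2}UVB^{1/2}$ produces a positive semidefinite block matrix by the first step. A direct cyclic computation then shows
\begin{equation*}
\tr\bigl((A^{1/2}CB^{1/2})R^*\bigr) = \tr(U\Sigma V V^* U^*) = \tr(\Sigma U^* U) = \tr(\Sigma),
\end{equation*}
attaining equality.

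The main technical subtlety is the possibly degenerate case where $A$ or $B$ has a nontrivial kernel, so that $A^{-1/2}$ and $B^{-1/2}$ need not exist; the characterization of positive block matrices through the contractive factor $R$, rather than a naive solution $R=A^{-1/2}\Gamma B^{-1/2}$, is precisely what resolves this. Apart from this point, the argument is a short composition of trace--operator norm duality with the singular value decomposition.
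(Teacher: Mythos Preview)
Your proof is correct and follows essentially the same route as the paper's: both reduce to the trace--operator norm duality $|\tr(MR^*)|\le\|M\|_{\tr}\|R\|_{\op}$ after writing $\Gamma=A^{1/2}RB^{1/2}$ with $\|R\|_{\op}\le 1$, and both attain equality via $R=UV$ from the singular value decomposition. The only difference is in the degenerate case: the paper first treats invertible $A,B$ (taking $R=A^{-1/2}\Gamma B^{-1/2}$ directly) and then passes to general $A,B$ by the approximation $A_\varepsilon=A+\varepsilon I$, $B_\varepsilon=B+\varepsilon I$ and letting $\varepsilon\to 0$, whereas you bypass this limit by invoking the block-matrix contraction characterization from the outset---a slightly cleaner treatment.
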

\begin{proof}
	We first show the results for nondegenerate \(A\) and \(B\).
	We notice \(\begin{psmallmatrix} A & \Gamma \\ \Gamma^{*} & B\end{psmallmatrix}\geq 0\) is equivalent to \(I\geq (A^{-1/2}\Gamma B^{-1/2})^{*}(A^{-1/2}\Gamma B^{-1/2})\).
	Moreover, the singular value decomposition gives
	\begin{equation*}
		\tr(C\Gamma^{*})=\tr(A^{1/2}CB^{1/2} (A^{-1/2}\Gamma B^{-1/2})^{*} )\leq \tr(\Sigma)=\|A^{1/2}CB^{1/2}\|_{\tr}.
	\end{equation*}
	Now we consider the general case.
	Since \(\begin{psmallmatrix} A & \Gamma \\ \Gamma^{*} & B\end{psmallmatrix}\geq 0\) is equivalent to \(\begin{psmallmatrix} A_{\varepsilon} & \Gamma \\ \Gamma^{*} & B_{\varepsilon}\end{psmallmatrix}\geq 0\) for any \(\varepsilon>0\) where \(A_{\varepsilon}= A+\varepsilon I\) and \(B_{\varepsilon}=B+\varepsilon I\).
	We derive \(\tr(C\Gamma^{*})\leq \|A_{\varepsilon}^{1/2}CB_{\varepsilon}^{1/2}\|_{\tr}\) for any \(\varepsilon>0\).
	Therefore, we conclude the proof by taking the limit \(\varepsilon\to 0\) and noticing the equality can be attained by \(\Gamma=A^{1/2}UVB^{1/2}\).
\end{proof}

\begin{proof}[Proof of Theorem \ref{thm-gp2}]
	We put emphasis on the difference between the unit multiplicity case and the higher multiplicity case and only sketch the similar part.
	We write \(M_{1}=(M_{1}^{1},\dots, M^{m}_{1})\), \(M_{2}=(M_{2}^{1},\dots,M_{2}^{n})\).
	Similar to the unit multiplicity case we notice
	\begin{align*}
		\sup_{\pi\in \Pi_{\bc}(X_{1},X_{2})} \E_{\pi}[\la X_{1} ,X_{2}\ra] & = \sup_{\pi\in \Pi_{\bc}(M_{1},M_{2})}\E_{\pi}[\la X_{1} ,X_{2}\ra]                                                                                                                \\
		                                                                   & =\sup_{\pi\in \Pi_{\bc}(M_{1},M_{2})}  \sum_{i,j}\E_{\pi}\lb[\int_{0}^{T} \int_{0}^{T}k_{1}^{i}(t,s)k_{2}^{j}(t,s) [M_{1}^{i},M_{2}^{j}](\md s)\md t\rb]                           \\
		                                                                   & = \sup_{\pi\in \Pi_{\bc}(M_{1},M_{2})} \sum_{i,j} \E_{\pi}\lb[\int_{0}^{T} \la k_{1}^{i}(\cdot,s), k_{2}^{j}(\cdot,s)\ra \Gamma^{i,j} (s)\sqrt{\mu_{1}^{1}\mu_{2}^{1}}(\md s)\rb],
	\end{align*}
	where \(\Gamma^{i,j}\) is the density of \([M_{1}^{i},M_{2}^{j}]\) with respect to \(\sqrt{\mu_{1}^{1}\mu_{2}^{1}}\).
	By Kunita--Watanabe inequality, we derive
	\begin{equation*}
		\begin{pmatrix}
			\diag \lb(\frac{\md \mu_{1}^{1}}{\md \mu_{1}^{1}    },\dots, \frac{\md \mu_{1}^{m}}{\md \mu_{1}^{1}}\rb) & \Gamma                                                                                                   \\
			\Gamma^{*}                                                                                               & \diag \lb(\frac{\md \mu_{2}^{1}}{\md \mu_{2}^{1}    },\dots, \frac{\md \mu_{2}^{n}}{\md \mu_{2}^{1}}\rb)
		\end{pmatrix}(s)\geq 0.
	\end{equation*}
	By Lemma \ref{lem-tr}, we conclude the proof.
	In particular, the supremum is induced by the Gaussian coupling
	\[\left\{\begin{aligned}M_{1}(t) & =\int_{0}^{t}\sqrt{\diag \lb(\frac{\md \mu_{1}^{1}}{\md \nu    },\dots, \frac{\md \mu_{1}^{m}}{\md \nu}\rb)}(s)\md \tilde{M}(s),               \\
               M_{2}(t) & =\int_{0}^{t}\sqrt{ \diag \lb(\frac{\md \mu_{2}^{1}}{\md \nu    },\dots, \frac{\md \mu_{2}^{n}}{\md \nu}\rb)}(s)\Gamma^{*}(s)\md \tilde{M}(s),
		\end{aligned}\right.
	\]
	where \(\nu=\mu_{1}^{1}+\mu_{2}^{1}\), \(\Gamma\) a deterministic process attains the supremum in the above estimate,   \(\tilde{M}\) is a Gaussian martingale with independent increments and  \([\tilde{M}](\md s)=\Id \nu(\md s)\).


\end{proof}

\subsection{A martingale approximation to the fractional BMs}
It is well-known that, except in the case \(H=1/2\), the fractional Brownian motion is neither a martingale nor a Markov process.
Hence, models based on fractional Brownian motions in practice are often less tractable and lead to difficulty in numerical simulation.
To this end, we use the transfer principle  to derive the best martingale approximation of a fractional Brownian motion in terms of their adapted Wasserstein distance, i.e.,
\begin{equation}
	\label{eqn-mh}
	\inf_{M}\AW_{2}(B_{H},M)^{2}, \text{ where } M \text{ is a } \bF^{B_{H}}\text{-martingale}.
\end{equation}
\begin{thm}
	\label{thm-mart}
	Let \(k_{H}\) be the Molchan--Golosov kernel of the fractional Brownian motion \(B_{H}\).
	Then, the solution to \eqref{eqn-mh} is given by
	\begin{equation*}
		M_{H}(t)=\int_{0}^{t}  \frac{1}{T-r}\int_{r}^{T}k_{H}(s,r)\md s \md B(r).
	\end{equation*}
\end{thm}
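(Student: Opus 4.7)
The strategy is to combine the canonical Molchan--Golosov representation of $B_{H}$ with Theorem~\ref{thm-gp} to reduce the infimum over martingales to a pointwise scalar minimization. Since $B_{H}(t)=\int_{0}^{t}k_{H}(t,s)\md B(s)$ is a canonical representation (Theorem~\ref{thm-canonical}), we have $\bF^{B_{H}}=\bF^{B}$, and by the Brownian martingale representation theorem every square-integrable $\bF^{B_{H}}$-martingale can be written as $M(t)=M(0)+\int_{0}^{t}h(s)\md B(s)$ for some predictable $h$.

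First I would treat centered Gaussian martingales $M(t)=\int_{0}^{t}\alpha(s)\md B(s)$ with $\alpha\in L^{2}([0,T])$ deterministic. Both $B_{H}$ and $M$ are then purely nondeterministic, mean-square continuous Gaussian processes of unit multiplicity, with canonical kernels $k_{H}(t,s)$ (driving measure $\lambda$) and $\mathbf{1}_{\{s\leq t\}}$ (driving measure $\alpha(s)^{2}\md s$). Setting $\phi(s):=\int_{s}^{T}k_{H}(u,s)\md u\geq 0$ and observing that $\sqrt{\lambda\cdot\alpha^{2}\lambda}(\md s)=|\alpha(s)|\md s$, Theorem~\ref{thm-gp} yields
\[
\AW_{2}(B_{H},M)^{2}=\int_{0}^{T}\!\!\int_{s}^{T}k_{H}(t,s)^{2}\md t\md s+\int_{0}^{T}(T-s)\alpha(s)^{2}\md s-2\int_{0}^{T}\phi(s)|\alpha(s)|\md s.
\]
For each $s$ this is a scalar quadratic in $|\alpha(s)|$, minimized uniquely at $\alpha(s)=\alpha^{*}(s):=\phi(s)/(T-s)$, which is precisely the integrand defining $M_{H}$. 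Since $\alpha^{*}>0$ on $[0,T)$ we have $\cF^{M_{H}}_{t}=\cF^{B}_{t}=\cF^{B_{H}}_{t}$, so the identity coupling is bicausal and realises the Gaussian formula, confirming optimality of $M_{H}$ within the Gaussian class.

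The main obstacle is extending this optimality to general (possibly non-Gaussian) $\bF^{B_{H}}$-martingales. My plan is to associate to $M=\int h\md B$ the Gaussian martingale $\widetilde M(t)=\int_{0}^{t}\widetilde\alpha(s)\md\widetilde B(s)$ with $\widetilde\alpha(s):=\sqrt{\E[h(s)^{2}]}$, noting that $\E[\widetilde M(T)^{2}]=\E[M(T)^{2}]$ and $\E\|\widetilde M\|^{2}=\E\|M\|^{2}$, and to show $\AW_{2}(B_{H},\widetilde M)\leq\AW_{2}(B_{H},M)$. For this I would use the fact (see \citet[Remark~2.3]{acciaio20Causal}) that under any bicausal coupling $M$ remains a martingale with respect to the joint filtration; combined with the $\cF^{B_{H},M}_{t}$-measurability of $B_{H}(t)$, this yields the identity
\[
\E_{\pi}\!\!\int_{0}^{T}B_{H}(t)M(t)\md t=\E_{\pi}\Big[M(T)\int_{0}^{T}B_{H}(t)\md t\Big],
\]
so the bicausal supremum depends on $M$ only through the coupling of the single random variable $M(T)$ with the Gaussian $X:=\int_{0}^{T}B_{H}(t)\md t$. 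Combining this identity with a Jensen-type bound $|\E[h(s)]|\leq\sqrt{\E[h(s)^{2}]}=\widetilde\alpha(s)$ and the Gaussian supremum value $\int_{0}^{T}\phi(s)\widetilde\alpha(s)\md s$ should deliver the desired reduction; the delicate point is controlling the bicausal supremum uniformly over all couplings rather than only the natural synchronous one.
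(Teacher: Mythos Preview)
The paper's proof is far more direct than yours: it does not split into Gaussian versus non-Gaussian cases and does not invoke Theorem~\ref{thm-gp} at all. Instead it argues that under any bicausal coupling \(\pi\), the process \(M\) remains a martingale in the joint filtration in which \(B\) (the Brownian motion driving \(B_{H}\) on the first coordinate) is still a Brownian motion. Representing \(M(t)=\int_{0}^{t}\rho(r)\,\md B(r)\) via the martingale representation theorem (strictly, plus an orthogonal martingale that can only increase the cost), one obtains
\[
E_{\pi}\big[\|B_{H}-M\|^{2}\big]\;\geq\;E_{\pi}\Big[\int_{0}^{T}\!\!\int_{r}^{T}\big(k_{H}(s,r)-\rho(r)\big)^{2}\,\md s\,\md r\Big],
\]
and for each \((r,\omega)\) the inner integral is a scalar quadratic in \(\rho(r)\), minimized at the deterministic value \(\rho_{H}(r)=\phi(r)/(T-r)\). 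This gives the lower bound uniformly over all \((M,\pi)\), and equality is attained by \(M_{H}\) under the identity coupling (which is bicausal because \(\rho_{H}>0\) forces \(\bF^{M_{H}}=\bF^{B}\)). No separate treatment of the Gaussian case is needed.

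Your Gaussian computation via Theorem~\ref{thm-gp} is correct but unnecessary. For the extension to general \(M\), the ``delicate point'' you flag is a genuine gap: the Jensen bound \(|E[h(s)]|\leq\widetilde\alpha(s)\) concerns the integrand \(h\) on the second marginal, whereas the quantity you must control, \(\sup_{\pi}E_{\pi}[M(T)X]\), lives on the coupled space where \(h\) has no direct role. To close this you would need precisely the Kunita--Watanabe decomposition \(M=\int\rho\,\md B+N\) on the product space, which yields \(E_{\pi}[M(T)X]=\int_{0}^{T}\phi(s)E_{\pi}[\rho(s)]\,\md s\) together with \(|\rho|\leq|h|\) pathwise (from \([M]_{t}=\int_{0}^{t}h^{2}=\int_{0}^{t}\rho^{2}+[N]_{t}\)), and then \(|E_{\pi}[\rho(s)]|\leq E[|h(s)|]\leq\widetilde\alpha(s)\). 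But once you have this decomposition in hand, the paper's direct pointwise minimization over \(\rho\) is a one-line argument, and your detour through Gaussianization buys nothing.
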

\begin{proof}
	Since \(B_{H}(t)=\int_{0}^{t}k_{H}(t,s)\md B(s)\) is a canonical representation, we have \(\bF^{B_{H}}=\bF^{B}\).
	Without loss of generality, we may restrict \eqref{eqn-mh} to the set of centered and square integrable martingales.
	Under any bicausal coupling \(\pi\), \(M\) is still a \(\bF^{B}\)-martingale.
	By martingale representation theorem, we deduce
	\begin{equation*}
		M(t)=\int_{0}^{t} \rho(r) \md B(r), \text{ where } \rho \text{ is a } \bF^{B}\text{-predictable process}.
	\end{equation*}
	Therefore, we have
	\begin{align*}
		\inf_{M}\AW_{2}(B_{H},M)^{2} & =\inf_{\rho}\E\lb[\int_{0}^{T} \lb|B_{H}(s)-\int_{0}^{s} \rho(r) \md B(r)\rb|^{2}\md s\rb]     \\
		                             & =\inf_{\rho}\E\lb[\int_{0}^{T} \lb|\int_{0}^{s}(k_{H}(s,r)- \rho(r)) \md B(r)\rb|^{2}\md s\rb] \\
		                             & =\inf_{\rho}\E\lb[\int_{0}^{T} \int_{r}^{T} (k_{H}(s,r)-\rho(r))^{2}\md s \md r\rb].
	\end{align*}
	It is clear that the optimal \(\rho\) is given by \(\rho_{H}(r)=\frac{1}{T-r}\int_{r}^{T}k_{H}(s,r)\md s\).
\end{proof}
We can interpret \(M_{H}\) as the martingale whose volatility is given by the average volatility of the prediction process of \(B_{H}\).
To be more precise, we introduce the prediction process \(\Theta_{H}\) of \(B_{H}\) as the double-indexed process given by
\begin{equation*}
	\Theta_{H}(s;t):=\E[B_{H}(t)|\cF_{s}^{B_{H}}]=\int_{0}^{s}k_{H}(t,r)\md B(r) \text{ for } 0\leq s\leq t.
\end{equation*}
In particular, for any fixed \(t\in[0,T]\), \(\Theta_{H}(\cdot\,;t)\) is a martingale with volatility given by \(k_{H}(t,\cdot)\).
Therefore, the volatility of the martingale \(M_{H}\) at the current time \(r\), \(\rho_{H}(r)\),  is given by the current volatility  of the prediction process \(\Theta_{H}(\cdot\,;t)\) averaged over the future period \(t\in[r,T]\).

\section{Fractional SDEs}
\label{sec-fsde}
In this section, we investigate the adapted Wasserstein distance between 1D fractional SDEs.
Let \(X_{i}\) be the solution to
\begin{equation}
	\label{eqn-fsde}
	X_{i}(t)= x_{i} + \int_{0}^{t} b_{i}(X_{i}(s))\md s + \int_{0}^{t}\sigma_{i}(X_{i}(s ))\md Z_{i}(s),
\end{equation}
where \(Z_{i}(t)=\int_{0}^{t}k_{i}(t,s)\md B_{i}(s)\) and \(B_{i}\) is a standard Brownian motion.
\begin{asmp}
	\label{asmp-fsde}
	We assume
	\begin{itemize}
		\item \(b_{i},\sigma_{i}\in C^{2}\) with bounded first and second derivatives.
		\item \(b_{i}''\) and \(\sigma_{i}''\) are  uniform continuous with a modulus of continuity \(\varrho_{i}\).
		\item  \(\sigma_{i}\) is positive, bounded, and bounded away from 0.
	\end{itemize}
\end{asmp}

\begin{asmp}
	\label{asmp-k}
	We assume \(Z_{i}(t)=\int_{0}^{t}k_{i}(t,s)\md B_{i}(s)\) is a canonical representation.
	Moreover, \(k_{i}\) satisfies
	\begin{itemize}
		\item \(k_{i}(t,s)\geq 0\) for any \(t,s\in[0,T]\).
		\item \(k_{i}(\cdot,s)\in C^{1}([0,T];\bbR)\) for any \(s\in (0,T]\).
		\item \(
		      |k_{i}(t,s)|\leq C s^{1/2-H}|t-s|^{H-1/2}\) and \(|\partial_{t}k_{i}(t,s)|\leq C s^{1/2 -H}|t-s|^{H-3/2}\) for some \(H\in(1/2,1)\).
	\end{itemize}
\end{asmp}

\begin{asmp}
	\label{asmp-mono}
	We assume either of the following conditions holds:
	\begin{enumerate}[label=(\roman*)]
		\item \((b_{i}/\sigma_{i})\) is non-decreasing.
		\item \(k_{1}(\cdot,s)\) and \(k_{2}(\cdot,s)\) are both non-decreasing for any \(s\in(0,T]\).
	\end{enumerate}
\end{asmp}

The following well-posedness result is standard, and for example, can be found in \citet[Section 8.3]{friz20Course}, \citet[Theorem A.1]{viens19martingale}.
\begin{lem}
	\label{lem-prior}
	Under Assumptions \ref{asmp-fsde} and \ref{asmp-k}, fractional SDE \eqref{eqn-fsde}  is well-posed with a unique \(\alpha\)-H\"older continuous strong solution for any \(\alpha<H\).
	The stochastic integral \(\int_{0}^{t}\sigma_{i}(X_{i}(s ))\md Z_{i}(s)\) can be interpreted as a Young integral.
	Moreover, \(\E[\sup_{t\in\in[0,T]}|X_{i}(t)|^{p}]<\infty\) for any \(p\geq 1\).
\end{lem}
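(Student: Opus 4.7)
The plan is to combine three classical ingredients: first, Kolmogorov's continuity criterion to upgrade the kernel bounds of Assumption~\ref{asmp-k} to $\alpha$-H\"older regularity of the Gaussian noise $Z_i$ for every $\alpha<H$; second, Young integration theory (applicable as soon as $\alpha>1/2$) to give a pathwise meaning to $\int_0^t\sigma_i(X_i(s))\md Z_i(s)$; and third, a Picard iteration in H\"older space to extract the unique strong solution.

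First I would show that $Z_i$ admits an $\alpha$-H\"older continuous modification for every $\alpha<H$. Since $Z_i$ is centred Gaussian with the representation $Z_i(t)=\int_0^t k_i(t,r)\md B_i(r)$, Kolmogorov's criterion reduces the problem to the bound $\E[|Z_i(t)-Z_i(s)|^2]\leq C|t-s|^{2H}$. Splitting the It\^o isometry into
\begin{equation*}
    \int_0^s (k_i(t,r)-k_i(s,r))^2\md r + \int_s^t k_i(t,r)^2\md r,
\end{equation*}
the second piece is controlled directly by $|k_i(t,r)|\leq C r^{1/2-H}(t-r)^{H-1/2}$, while the first is controlled via the fundamental theorem of calculus combined with the derivative bound $|\partial_t k_i(u,r)|\leq C r^{1/2-H}(u-r)^{H-3/2}$; both estimates produce the desired $(t-s)^{2H}$ scaling.

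With $Z_i\in C^\alpha$ for some $\alpha>1/2$, the Young integral $\int_0^\cdot\sigma_i(X_i)\md Z_i$ is well defined as soon as $X_i$ is $\beta$-H\"older with $\alpha+\beta>1$, and one obtains the standard Young estimate
\begin{equation*}
    \Bigl\|\int_0^\cdot\sigma_i(X_i)\md Z_i\Bigr\|_{C^\alpha}\leq C(1+\|X_i\|_{C^\alpha})\|Z_i\|_{C^\alpha},
\end{equation*}
using the $C^2$-boundedness of $\sigma_i$. A Picard iteration in $C^\alpha([0,T];\bbR)$, combined with the trivially Lipschitz drift $\int_0^\cdot b_i(X_i)\md s$, yields existence and uniqueness on a short time interval; because $b_i,\sigma_i$ have bounded derivatives the solution does not blow up and can be glued to cover $[0,T]$.

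The main obstacle is the short-time contraction step: controlling the Young integral of the difference $\sigma_i(X)-\sigma_i(Y)$ in H\"older norm requires a first-order Taylor expansion of $\sigma_i$ together with careful estimates on the remainder, which is precisely where the $C^2$-boundedness of $\sigma_i$ enters. Finally, the moment bound $\E[\sup_{t\in[0,T]}|X_i(t)|^p]<\infty$ follows from the pathwise Young estimate combined with Fernique's theorem (which gives Gaussian tails for $\|Z_i\|_{C^\alpha}$) and Gronwall-type control of $\|X_i\|_{C^\alpha}$ in terms of $\|Z_i\|_{C^\alpha}$. The technical details can be found in \citet[Section~8.3]{friz20Course} and \citet[Theorem~A.1]{viens19martingale}.
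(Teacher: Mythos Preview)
Your proposal is correct and follows precisely the standard route; note that the paper itself does not give a proof of this lemma but simply records it as a well-known result, deferring to \citet[Section~8.3]{friz20Course} and \citet[Theorem~A.1]{viens19martingale}---the very references you cite at the end. Your sketch (Kolmogorov for the H\"older regularity of $Z_i$, Young integration for $\alpha>1/2$, Picard iteration in $C^\alpha$, and Fernique for the moment bounds) is exactly the argument carried out in those references, so there is nothing to compare.
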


\begin{thm}
	\label{thm-fsde}
	Under Assumptions \ref{asmp-fsde}, \ref{asmp-k} and \ref{asmp-mono}, the adapted Wasserstein distance between \(X_{1}\) and \(X_{2}\)  is attained by the synchronous coupling between \(B_{1}\) and \(B_{2}\), i.e., the  noises \(Z_{1}\) and \(Z_{2}\) are driven by the same Brownian motion.
	In particular the synchronous coupling is a bicausal coupling between \(X_{1}\) and \(X_{2}\).
\end{thm}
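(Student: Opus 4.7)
The plan is to convert the adapted Wasserstein problem into a stochastic control problem via the transfer principle, reduce multiplicative to additive noise by a Lamperti transform, and then apply a path-dependent HJB verification theorem of Viens--Zhang type; the monotonicity assumption~\ref{asmp-mono} will be exactly what forces the optimal correlation between the driving Brownian motions to equal $+1$. Concretely, Assumption~\ref{asmp-k} gives $\bF^{Z_i} = \bF^{B_i}$, and combined with the non-degeneracy of $\sigma_i$ the Young-integral well-posedness of Lemma~\ref{lem-prior} lets us recover $Z_i$, hence $B_i$, from $X_i$; thus $\bF^{X_i} = \bF^{B_i}$ and the transfer principle yields $\Pi_{\bc}(X_1, X_2) = \Pi_{\bc}(B_1, B_2)$. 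Any bicausal coupling between the two Brownian motions is, by the martingale representation theorem on the product filtration, parameterized by a predictable $\rho : [0,T] \to [-1,1]$ via $\md B_2 = \rho\, \md B_1 + \sqrt{1 - \rho^2}\, \md W$ for an independent Brownian motion $W$. The synchronous coupling is $\rho \equiv 1$, which is manifestly bicausal, so it remains to prove that it attains the infimum of $\E\bigl[\int_0^T |X_1^\rho(t) - X_2^\rho(t)|^2\, \md t\bigr]$ over $\rho$.

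Next, I would introduce the Lamperti transform $Y_i := F_i(X_i)$ with $F_i(x) = \int_{x_i}^x \sigma_i(y)^{-1}\, \md y$. Since $H > 1/2$, both $X_i$ and $Z_i$ are H\"older continuous of exponent strictly above $1/2$, so the Young chain rule applies without an It\^o correction and $Y_i$ solves the additive fractional SDE $\md Y_i = \tilde{b}_i(Y_i)\, \md t + \md Z_i$ with $\tilde{b}_i := (b_i/\sigma_i) \circ F_i^{-1}$. The map $F_i$ is a bi-Lipschitz homeomorphism, so $\bF^{Y_i} = \bF^{X_i}$, and the cost $|X_1 - X_2|^2$ becomes a smooth function of $(Y_1, Y_2)$ with bounded gradient. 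Under Assumption~\ref{asmp-mono}(i) the monotonicity of $b_i/\sigma_i$ transfers to $\tilde{b}_i$ (since $F_i^{-1}$ is increasing), while case (ii) only involves the kernels and is untouched by the transform. It therefore suffices to prove optimality of $\rho \equiv 1$ for the additive system in $Y$-coordinates.

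I would then adapt the functional It\^o / path-dependent HJB framework of \citet{viens19martingale}, in which the non-Markovianity of the fractional noise is handled by taking the infinite-dimensional prediction process $\Theta_i(t, \cdot) := \E[Z_i(\cdot) \mid \cF^{B_i}_t]$ as the state. Writing the value functional $V(t, \omega_1, \omega_2)$ as the conditional cost-to-go and applying functional It\^o along controlled trajectories yields a path-dependent HJB equation whose Hamiltonian is affine in $\rho$: the drift contributions and the pure $B_1, W$ second-order terms do not see $\rho$, and the only control-dependent term is a cross term of the form $\rho \cdot \cH(t, \omega_1, \omega_2)$ involving a mixed functional second derivative of $V$. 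The verification theorem then reduces the proof to (a) establishing enough regularity of $V$ to justify the functional It\^o expansion, and (b) showing $\cH \leq 0$ pointwise, so that the minimum of the linear map $\rho \mapsto \rho\, \cH$ on $[-1, 1]$ is attained at $\rho = 1$.

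The main obstacle is step (b), i.e.\ pinning down the sign of the cross term. The heuristic is that the synchronous coupling aligns the increments of $Y_1$ and $Y_2$ in the same direction, and the monotonicity hypothesis ensures that the gap $Y_1 - Y_2$ responds monotonically to this common perturbation, so any departure from $\rho = 1$ can only inflate the $L^2$ cost. Making this rigorous amounts to a comparison/coupling argument for the two-point motion of the additive fractional SDEs: Assumption~\ref{asmp-mono}(i) supplies a drift monotonicity for $\tilde{b}_i$, while (ii) supplies a nonnegativity of the Volterra-convolved noise difference through $k_i(\cdot,s)$ being non-decreasing. In either case one extracts the required sign of $\cH$. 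The $C^2$ regularity in Assumption~\ref{asmp-fsde}, together with uniform continuity of $b_i'', \sigma_i''$, is precisely what legitimizes the functional It\^o expansion of the candidate value functional (via the technical estimates postponed to Section~\ref{sec-estimate}), which is where the bulk of the technical work will lie.
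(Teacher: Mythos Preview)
Your proposal is correct and follows essentially the same route as the paper: transfer principle to reduce to couplings of the driving Brownian motions, Lamperti reduction to additive noise, stochastic control reformulation with the correlation as control, Viens--Zhang path-dependent HJB with the prediction process as state, and a sign check on the cross term in the Hamiltonian. Two places where the paper is sharper than your sketch are worth flagging.

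First, the sign of the cross term is not obtained by a ``comparison/coupling argument for the two-point motion'' but by an explicit probabilistic representation: the paper shows that $\partial^2_{\omega_1\omega_2}V_*$ factors as $-2\E\bigl[\int_r^T \Gamma_{1,*}^{r,\omega_1}(t)\otimes\Gamma_{2,*}^{r,\omega_2}(t)\,\md t\bigr]$, where $\Gamma_{i,*}$ is the first-variation process of $\Theta_{i,*}$ and solves a linear ODE with closed-form exponential solution. Assumption~\ref{asmp-mono}(i) (i.e.\ $\tilde b_i'\ge 0$) makes $\langle\eta,\Gamma_{i,*}(t)\rangle\ge 0$ for $\eta\ge 0$ directly; under (ii) one integrates by parts against the monotone $k_i(\cdot,r)$. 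Your heuristic points in the right direction, but the actual mechanism is this concrete computation, not a comparison principle.

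Second, your Lamperti step has a gap: $\tilde b_i'' = (b_i''\circ g_i^{-1})(\sigma_i\circ g_i^{-1}) - \cdots$ contains terms involving the undifferentiated $b_i\circ g_i^{-1}$, which need not be bounded under Assumption~\ref{asmp-fsde}, so $(\tilde b_i,1)$ may fail the very hypotheses required for the verification theorem. The paper handles this by first truncating $b_i$ to bounded $b_i^n$ agreeing with $b_i$ on $[-n,n]$, applying the additive result (via Remark~\ref{rmk-gen}, since the cost becomes $|g_1^{-1}(Y_1)-g_2^{-1}(Y_2)|^2$), and then proving $\AW_2(X_i^n,X_i)\to 0$ under the synchronous coupling by a Gronwall estimate. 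You should not skip this approximation step.
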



\begin{rmk}
	Assumptions \ref{asmp-k} and \ref{asmp-mono} includes the Riemann--Liouville fractional kernel \(RL_{H}(t,s)=\Gamma(H+1/2)^{-1}(t-s)^{H-1/2}\1_{\{t\geq s\}}\), as well as the Molchan--Golosov kernel \(k_{H}(t,s)\)  for \(H\in(1/2,1)\).
\end{rmk}

We split the proof of Theorem \ref{thm-fsde} into two steps.
The first step is to show, by a stochastic control reformulation, the results hold for the additive noise, i.e., \(\sigma_{i}\equiv 1\).
In the second step, we apply Lamperti transform to reduce the general case to the additive noise case.

\subsection{Additive noise}
By strong well-posedness Lemma \ref{lem-prior}, we reduce the problem to a minimization over the bicausal coupling between the driving Brownian motions.
\begin{lem}
	Let \(\sigma_{i}\equiv 1\) for \(i=1,2\). Under Assumptions \ref{asmp-fsde} and \ref{asmp-k}, we have \(\Pi_{\bc}(X_{1},X_{2})=\Pi_{\bc}(B_{1},B_{2})\).
\end{lem}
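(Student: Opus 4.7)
The plan is to establish the equality of the natural filtrations $\bF^{X_i} = \bF^{B_i}$ for $i=1,2$; once this is in hand, the equality $\Pi_{\bc}(X_1,X_2) = \Pi_{\bc}(B_1,B_2)$ follows directly from the definition of bicausality, which only depends on the natural filtrations of the marginals.

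To prove $\bF^{X_i} = \bF^{B_i}$, I would establish the two inclusions separately. For $\bF^{X_i} \subseteq \bF^{B_i}$: by Lemma \ref{lem-prior}, $X_i$ is a strong solution of \eqref{eqn-fsde}, so $X_i(t)$ is measurable with respect to $\sigma(B_i(s) : s \leq t)$, and after completion $\cF_t^{X_i} \subseteq \cF_t^{B_i}$. For the reverse inclusion, I would first observe that since $\sigma_i \equiv 1$, the SDE rearranges as
\begin{equation*}
    Z_i(t) = X_i(t) - x_i - \int_0^t b_i(X_i(s))\md s,
\end{equation*}
which shows $Z_i(t)$ is $\cF_t^{X_i}$-measurable, hence $\bF^{Z_i} \subseteq \bF^{X_i}$. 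Then, since by Assumption \ref{asmp-k} the Volterra representation $Z_i(t) = \int_0^t k_i(t,s)\md B_i(s)$ is a canonical representation of the Gaussian process $Z_i$, Theorem \ref{thm-canonical}(iv) gives $\bF^{Z_i} = \bF^{B_i}$. Combining yields $\bF^{B_i} = \bF^{Z_i} \subseteq \bF^{X_i} \subseteq \bF^{B_i}$, so all three filtrations coincide.

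With $\bF^{X_i} = \bF^{B_i}$ established for each $i$, the definition of causal coupling requires only the $\cF^{X_i}_t$-measurability of the disintegration kernels in \eqref{def-caus}, which is identical to the $\cF^{B_i}_t$-measurability condition. Therefore $\Pi_{\bc}(X_1,X_2) = \Pi_{\bc}(B_1,B_2)$, as required.

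I do not expect a serious obstacle here: the argument is essentially bookkeeping of filtrations, and the only nontrivial input is that the Volterra representation of $Z_i$ is \emph{canonical}, which is assumed in Assumption \ref{asmp-k}. The mild subtlety to watch is the use of completions in the definition $\cF_t^X = \prescript{\P}{}\sigma(X(s):s\leq t)$; since all the measurability statements are modulo $\P$-null sets and the completions are taken with respect to the same $\P$, this does not affect the equalities above.
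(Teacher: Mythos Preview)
Your proposal is correct and follows essentially the same approach as the paper: reduce to showing $\bF^{X_i}=\bF^{B_i}$, obtain $\cF_t^{X_i}\subseteq\cF_t^{B_i}$ from strong well-posedness, rearrange the SDE to see $Z_i(t)\in\cF_t^{X_i}$, and invoke the canonical representation assumption to get $\cF_t^{B_i}=\cF_t^{Z_i}\subseteq\cF_t^{X_i}$. Your remark on completions is a welcome clarification but does not alter the argument.
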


\begin{proof}
	It suffices to show \(\bF^{X_{i}}=\bF^{B_{i}}\).
	From the strong well-posedness, we have \(\cF_{t}^{X_{i}}\subseteq \cF_{t}^{B_{i}}\) for any \(t\in[0,T]\).
	Moreover, we notice
	\begin{equation*}
		Z_{i}(t)=\int_{0}^{t}k_{i}(t,s)\md B_{i}(s)=X_{i}(t)-x_{i}-\int_{0}^{t}b_{i}(s,X_{i}(s))\md s\in \cF_{t}^{X_{i}}.
	\end{equation*}
	This implies \(\cF_{t}^{B_{i}}=\cF_{t}^{Z_{i}}\subseteq \cF_{t}^{X_{i}}\) from the canonical representation of \(Z_{i}\).
	Therefore, \(\cF_{t}^{X_{i}}=\cF_{t}^{B_{i}}\) and we conclude the proof.
\end{proof}

Now, similar to \citet{bion-nadal19Wassersteintype}, we address the bicausal optimal transport problem as a stochastic control problem with the control of the correlation of the driving Brownian motions.
We consider  a controlled system
\begin{equation*}
	\left\{
	\begin{aligned}
		X_{1}(t)     & = x_{1} + \int_{0}^{t} b_{1}(X_{1}(s))\md s + \int_{0}^{t}k_{1}(t,s)\md B_{1}(s),         \\
		X_{2}^{u}(t) & = x_{2} + \int_{0}^{t} b_{2}(X^{u}_{2}(s))\md s + \int_{0}^{t}k_{2}(t,s)\md B_{2}^{u}(s),
	\end{aligned}
	\right.
\end{equation*}
where \(\md B^{u}_{2}(t)=\sin(u(t))\md B_{1}(t)+ \cos(u(t))\md \tilde{B}_{1}(t)\) and \(\tilde{B}_{1}\) is a Brownian motion independent to \(B_{1}\).
We notice the control only enters the system through the correlation of the driving Brownian motions.
Our aim is to minimize
\begin{equation*}
	\inf_{u\in\scrU_{[0,T]}}\E\lb[\int_{0}^{T} \lb|X_{1}(t)-X_{2}^{u}(t)\rb|^{2}\md t\rb],
\end{equation*}
over \(\scrU_{[0,T]}\)  the set of \((\bF^{B_{1}}\vee \bF^{B^u_2})\)-predictable processes.
We immediately see that \(X_{1}\) no longer enjoys the flow property in the sense that
\begin{equation*}
	X_{1}(t)\neq \tilde{X}_{1}^{s,X_{1}}(t), \quad \text{ where }\quad \tilde{X}_{1}^{s,X_{1}}(t):= X_{1}(s)+\int_{s}^{t} b(X_{1}(r))\md r +\int_{s}^{t}k_{1}(t,s)\md B_{1}(r).
\end{equation*}
Therefore, the classical approach of dynamic programming does not apply directly.
To go around this issue an auxiliary system \(\Theta\) is introduced in \citet{viens19martingale} to retrieve the flow property.
We adapt their framework to our setting as
\begin{equation*}
	\left\{
	\begin{aligned}
		\Theta_{1}(s;t)     & =x_{1}+ \int_{0}^{s}b_{1}(\Theta_{1}(r;r))\md r + \int_{0}^{s} k_{1}(t,r)\md B_{1}(r),         \\
		\Theta_{2}^{u}(s;t) & = x_{2} + \int_{0}^{s}b_{2}(\Theta_{2}^{u}(r;r))\md r +\int_{0}^{s}k_{2}(t,r)\md B_{2}^{u}(r).
	\end{aligned}
	\right.
\end{equation*}
In particular, \((X_{1}(t),X_{2}^{u}(t))=(\Theta_{1}(t;t),\Theta_{2}^{u}(t;t))\) and
\begin{equation*}
	\AW_{2}(X_{1},X_{2} )^{2}=\inf_{\pi\in\Pi_{\bc}(B_{1},B_{2})}\E_{\pi}[\|X_{1}-X_{2}\|^{2}]=\inf_{u\in\scrU_{[0,T]}}\E\lb[\int_{0}^{T} \lb|\Theta_{1}(t;t)-\Theta_{2}^{u}(t;t)\rb|^{2}\md t\rb].
\end{equation*}
We can view \(\{(\Theta_{1}(t;\cdot),\Theta_{2}^{u}(t;\cdot)):t\in[0,T]\}\) as an infinite dimensional flow taking values in  \(C([0,T];\bbR^{2})\).
Naturally, we define the value function \(v:[0,T]\times C([0,T];\bbR^{2})\to\bbR  \) as
\begin{equation*}
	v(r,\omega_{1}, \omega_{2}):=\inf_{u\in \scrU_{[r,T]}} \E\lb[\int_{r}^{T} \lb|\Theta_{1}^{r,\omega_{1}}(t;t)-\Theta_{2}^{r,\omega_{2},u}(t;t)\rb|^{2}\md t \rb],
\end{equation*}
where
\begin{equation*}
	\left\{
	\begin{aligned}
		\Theta_{1}^{r,\omega_{1}}(\cdot\,;t)   & =\omega_{1}(t)+ \int_{r}^{\cdot}b_{1}(\Theta_{1}^{r,\omega_{1}}(s;s))\md s + \int_{r}^{\cdot}k_{1}(t,s)\md B_{1}(s),         \\
		\Theta_{2}^{r,\omega_{2},u}(\cdot\,;t) & = \omega_{2}(t) + \int_{r}^{\cdot}b_{2}(\Theta_{2}^{r,\omega_{2},u}(s;s))\md s +\int_{r}^{\cdot }k_{2}(t,s)\md B_{2}^{u}(s).
	\end{aligned}
	\right.
\end{equation*}
We denote the time derivative by \(\partial_{t}\) and the first and second Fr\'echet derivatives  by \(\partial_{\omega_{i}}\) and \(\partial^{2}_{\omega_{i}\omega_{j}}\), respectively.
The corresponding HJB equation is given by
\begin{equation}
	\label{eqn-hjb}
	(\partial_{t} + \cL_{1} +\cL_{2} + \cH) V(r,\omega_{1},\omega_{2})=-|\omega_{1}(r)-\omega_{2}(r)|^{2} \quad \text{ with }\quad V(T,\omega_{1},\omega_{2})=0.
\end{equation}
Here, \(\cL_{i}\) and \(\cH\) are given by
\begin{equation*}
	\cL_{i}V(r,\omega_{1},\omega_{2})=\la b_{i}( \omega_{i}(r))\1_{[0,T]}(\cdot),\partial_{\omega_{i}}V(r,\omega_{1},\omega_{2})(\cdot) \ra +\frac{1}{2}\la (k_{1}(\cdot,r),k_{1}(\cdot,r)) , \partial^{2}_{\omega_{i}\omega_{i}}V(r,\omega_{1},\omega_{2})(\cdot)\ra
\end{equation*}
and
\begin{equation*}
	\cH V(r,\omega_{1},\omega_{2}) = \inf_{a\in[-1,1]} a\la (k_{1}(\cdot,r),k_{2}(\cdot,r)), \partial^{2}_{\omega_{1}\omega_{2}}V(r,\omega_{1},\omega_{2})(\cdot)\ra.
\end{equation*}

We denote the expected cost under the synchronous coupling by \(V_{*}\), which is given by
\begin{equation*}
	V_{*}(r,\omega_{1}, \omega_{2}):= \E\lb[\int_{r}^{T} \lb|\Theta_{1,*}^{r,\omega_{1}}(t;t)-\Theta_{2,*}^{r,\omega_{2}}(t;t)\rb|^{2}\md t \rb],
\end{equation*}
where
\begin{equation*}
	\left\{
	\begin{aligned}
		\Theta_{1,*}^{r,\omega_{1}}(\cdot\,;t) & =\omega_{1}(t)+ \int_{r}^{\cdot}b_{1}(\Theta_{1,*}^{r,\omega_{1}}(s;s))\md s + \int_{r}^{\cdot}k_{1}(t,s) \md B_{1}(s),   \\
		\Theta_{2,*}^{r,\omega_{2}}(\cdot\,;t) & = \omega_{2}(t) + \int_{r}^{\cdot}b_{2}(\Theta_{2,*}^{r,\omega_{2}}(s;s))\md s +\int_{r}^{\cdot } k_{2}(t,s)\md B_{1}(s).
	\end{aligned}
	\right.
\end{equation*}

Our plan is to verify \(V_{*}\) coincides with the value function \(v\), which requires us to adapt a functional It\^o formula from \citet[Theorem 3.10]{viens19martingale} to our setting.
We remark that the kernel \(k_{i}\) here has the same singularity as the fractional Brownian motion kernel \(k_{H}(t,s)\) for \(H\in(1/2,1)\), and the singularity can only occur when \(s\) approaches 0.
Hence, for any \(r>0\) \citet[Theorem 3.10]{viens19martingale} is directly applicable.
This result is probabilistic which should be contrasted with the functional It\^o formula developed in \citet{dupire09Functional,cont10Change} where the authors derived a \emph{pathwise} It\^o formula for non-anticipative functionals.
\begin{lem}[Functional It\^o formula]
	\label{lem-ito}
	Let \(u:[0,T]\times C([0,T];\bbR^{2})\to \bbR\) be a purely anticipative functional, i.e., \(u(t,\omega_{1},\omega_{2})=u(t,\omega_{1}(\cdot\vee t),\omega_{2}(\cdot \vee t))\) for any \(t\in[0,T]\) and \(\omega_{i}\in C([0,T];\bbR)\).
	Assume \(u\in C^{1,2}\), and there exists a modulus of continuity \(\rho\) such that for  any  \(\eta , \tilde{\eta}\in C([0,T];\bbR)\),  \(u\) satisfies the following conditions:
	\begin{enumerate}[label=(\roman*)]
		\item for any \(\omega_{1},\omega_{2}\in C([0,T];\bbR)\), \begin{equation*}
			      \begin{aligned}
				      |\la \eta, \partial_{\omega_{i}}u(r,\omega_{1},\omega_{2})\ra|                              & \leq C (1+\|\omega_{1}\|_{\infty}+\|\omega_{2}\|_{\infty}) \|\eta\|_{\infty}                           \\
				      |\la (\eta,\tilde{\eta}), \partial^{2}_{\omega_{i}\omega_{j}}u(r,\omega_{1},\omega_{2})\ra| & \leq C (1+\|\omega_{1}\|_{\infty}+\|\omega_{2}\|_{\infty}) \|\eta\|_{\infty}\|\tilde{\eta}\|_{\infty};
			      \end{aligned}
		      \end{equation*}
		\item for any other \(\omega_{1}',\omega_{2}'\in C([0,T];\bbR)\),
		      \begin{equation*}
			      \begin{aligned}
				       & |\la (\eta,\tilde{\eta}), \partial^{2}_{\omega_{i}\omega_{j}}u(r,\omega_{1},\omega_{2})-\partial^{2}_{\omega_{i}\omega_{j}}u(r,\omega_{1}',\omega_{2}')\ra|                                     \\
				       & \hspace{3cm}\leq C (1+\|\omega_{1}\|_{\infty}+\|\omega_{2}\|_{\infty}) \|\eta\|_{\infty}\|\tilde{\eta}\|_{\infty}\rho(\|\omega_{1}-\omega_{1}'\|_{\infty}+\|\omega_{2}-\omega_{2}'\|_{\infty}).
			      \end{aligned}
		      \end{equation*}
		      Then under Assumptions \ref{asmp-fsde} and \ref{asmp-k}, we have
		      \begin{align*}
			      u(t,\Theta_{1,*}^{r,\omega_{1}}(t;\cdot),\Theta_{2,*}^{r,\omega_{2}}(t;\cdot)) & =u(r,\omega_{1},\omega_{2})+\int_{r}^{t} (\partial_{t}+\cL_{1}+\cL_{2})u(s,\Theta_{1,*}^{r,\omega_{1}}(s;\cdot),\Theta_{2,*}^{r,\omega_{2}}(s;\cdot))\md s                            \\
			                                                                                     & \qquad + \int_{r}^{t}\la (k_{1}(\cdot,s),k_{2}(\cdot,s)), \partial_{\omega_{1}\omega_{2}}^{2}u(s, \Theta_{1,*}^{r,\omega_{1}}(s;\cdot),\Theta_{2,*}^{r,\omega_{2}}(s;\cdot))\ra \md s \\
			                                                                                     & \qquad + \int_{r}^{t}\la k_{1}(\cdot,s),\partial_{\omega_{1}}u(s, \Theta_{1,*}^{r,\omega_{1}}(s;\cdot),\Theta_{2,*}^{r,\omega_{2}}(s;\cdot))\ra \md B_{1}(s)                          \\
			                                                                                     & \qquad + \int_{r}^{t}\la k_{2}(\cdot,s),\partial_{\omega_{2}}u(s, \Theta_{1,*}^{r,\omega_{1}}(s;\cdot),\Theta_{2,*}^{r,\omega_{2}}(s;\cdot))\ra \md B_{1}(s).
		      \end{align*}
	\end{enumerate}

\end{lem}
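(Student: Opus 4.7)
The plan is to adapt the proof of \citet[Theorem 3.10]{viens19martingale} to the two-process synchronous setting by treating $(\Theta_{1,*}^{r,\omega_1}(s;\cdot),\Theta_{2,*}^{r,\omega_2}(s;\cdot))$ as a single $C([0,T];\bbR^2)$-valued process indexed by $s$, driven entirely by the single Brownian motion $B_1$. For every fixed maturity $t$, the map $s\mapsto \Theta_{i,*}^{r,\omega_i}(s;t)$ is a classical It\^o semimartingale with $d_{s}\Theta_{i,*}(s;t)=b_i(\Theta_{i,*}(s;s))\,ds+k_i(t,s)\,dB_1(s)$, so the joint bracket satisfies $d_{s}\la \Theta_{i,*}(\cdot;t_1),\Theta_{j,*}(\cdot;t_2)\ra = k_i(t_1,s)k_j(t_2,s)\,ds$. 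This is precisely what generates the three second-order contributions $\cL_1 u$, $\cL_2 u$, and $\la (k_1(\cdot,s),k_2(\cdot,s)),\partial^2_{\omega_1\omega_2}u\ra$ in the stated formula, while the drift terms $b_i\mathbf{1}_{[0,T]}$ inside $\cL_i$ arise from applying $\partial_{\omega_i}u$ to the constant-in-$t$ path increments produced by the drift.

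The first step is to match hypotheses. Conditions (i) and (ii) in the lemma are the polynomial growth and uniform-continuity-with-modulus hypotheses needed by Viens--Zhang to control (respectively) the $L^2$-integrability of the integrands and the Taylor remainder in the functional It\^o expansion. The stochastic integrals are well defined because Assumption~\ref{asmp-k} gives $\int_r^T k_i(t,s)^2\,ds<\infty$ via a Beta-function estimate $\int_0^t s^{1-2H}(t-s)^{2H-1}\,ds<\infty$ for $H\in(1/2,1)$, and combined with the growth condition in (i) and Lemma~\ref{lem-prior} this puts the integrands $\la k_i(\cdot,s),\partial_{\omega_i}u(\cdot)\ra$ in $L^2(\Omega\times[r,T])$. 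On any subinterval $[r,T]$ with $r>0$ the kernel $k_i(t,\cdot)$ is in fact bounded and the cited theorem applies verbatim after identifying $\partial_{\omega_i}u$, $\partial^2_{\omega_i\omega_j}u$ with Viens--Zhang's path derivatives.

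The second step is the extension to $r=0$ by localization. I would apply the formula already established on $[r_n,t]$ for $r_n\downarrow 0$ and pass to the limit. Three pieces need to converge: the boundary term $u(r_n,\Theta_{1,*}^{r_n,\omega_1}(r_n;\cdot),\Theta_{2,*}^{r_n,\omega_2}(r_n;\cdot))\to u(0,\omega_1,\omega_2)$, which follows from the continuity of $u$ and the path-continuity of $\Theta_{i,*}$; the drift-type Lebesgue integrals, controlled by the polynomial growth bound (i) combined with $\bbE[\sup_{t\leq T}|\Theta_{i,*}(t;t)|^{p}]<\infty$ (Lemma~\ref{lem-prior}) and the integrable Beta bound above via dominated convergence; and the It\^o integrals, which converge in $L^2$ since $\int_0^{r_n}k_i(t,s)^2\,ds\to 0$ uniformly in $t$.

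The main obstacle is checking that the discretized Taylor expansion
\begin{equation*}
u(s_{k+1},\Theta_{\cdot,*}(s_{k+1};\cdot))-u(s_k,\Theta_{\cdot,*}(s_k;\cdot))
\end{equation*}
genuinely converges to the claimed integrals when the path increments
\begin{equation*}
\Delta\Theta_{i,*}(s_k;\cdot) = b_i(\Theta_{i,*}(s_k;s_k))\Delta s\,\mathbf{1}_{[0,T]}(\cdot) + \int_{s_k}^{s_{k+1}}k_i(\cdot,u)\,dB_1(u)
\end{equation*}
are themselves infinite-dimensional with a Volterra kernel singular at $s=0$. The quadratic remainder is controlled using hypothesis (ii) via
\begin{equation*}
|\la (\eta,\tilde\eta),\partial^2_{\omega_i\omega_j}u(r,\omega)-\partial^2_{\omega_i\omega_j}u(r,\omega')\ra|\leq C(1+\|\omega\|_\infty+\|\omega'\|_\infty)\|\eta\|_\infty\|\tilde\eta\|_\infty\varrho(\|\omega-\omega'\|_\infty),
\end{equation*}
so the remainder is controlled as soon as $\|\Delta\Theta_{i,*}(s_k;\cdot)\|_\infty\to 0$ in $L^p$, which is exactly the $\alpha$-H\"older continuity guaranteed by Lemma~\ref{lem-prior} for any $\alpha<H$. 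Summing the quadratic remainders over a mesh of size $|\Delta s|$ thus produces an error of order $|\Delta s|^{\alpha}$ uniformly in $k$, which vanishes as $|\Delta s|\to 0$.
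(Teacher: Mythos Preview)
Your proposal is correct and follows the same approach the paper takes: the paper does not give a standalone proof of this lemma but simply notes that it is an adaptation of \citet[Theorem~3.10]{viens19martingale}, remarking that the singularity of \(k_i(t,s)\) occurs only as \(s\to 0\) so that the cited result applies directly for any \(r>0\). Your write-up supplies the details the paper leaves implicit---the identification of \((\Theta_{1,*},\Theta_{2,*})\) as a single \(C([0,T];\bbR^2)\)-valued process driven by \(B_1\), the matching of hypotheses (i)--(ii) with the growth and modulus-of-continuity conditions in Viens--Zhang, and the localization \(r_n\downarrow 0\)---all of which are consistent with how the paper intends the lemma to be justified.
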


The following technical lemma states that \(V_{*}\) is sufficiently regular to apply the functional It\^o formula of \citet{viens19martingale}.
\begin{lem}
	\label{lem-reg}
	Under Assumptions \ref{asmp-fsde}, \ref{asmp-k}, and \ref{asmp-mono}, \(V_{*}\) satisfies conditions in Lemma~\ref{lem-ito}, and is a classical solution to
	\begin{equation}
		\label{eqn-v*}
		(\partial_{t}+\cL_{1}+\cL_{2})V_{*}(r,\omega_{1},\omega_{2})+\la (k_{1}(\cdot,r),k_{2}(\cdot,r)), \partial^{2}_{\omega_{1}\omega_{2}}V_{*}(r,\omega_{1},\omega_{2})(\cdot)\ra = -|\omega_{1}(r)-\omega_{2}(r)|^{2}.
	\end{equation}
	Moreover, there is a probabilistic representation of \(\partial^{2}_{\omega_{1}\omega_{2}}V_{*}\) given by
	\begin{equation}
		\label{eqn-cross}
		\la (\eta_{1},\eta_{2}), \partial_{\omega_{1}\omega_{2}}^{2}V_{*}(r,\omega_{1},\omega_{2})\ra=-2E\lb[\int_{r}^{T}\la \eta_{1}, \Gamma_{1,*}^{r,\omega_{1}}(t)\ra\la \eta_{2},\Gamma_{2,*}^{r,\omega_{2}}(t)\ra\md t\rb],
	\end{equation}
	where \(\Gamma_{i,*}^{r,\omega_{i}}\) is the unique solution to
	\begin{equation}
		\label{eqn-gam}
		\Gamma_{i,*}^{r,\omega_{i}}(t)=\delta(t)+ \int_{r}^{t} b_{i}'(\Theta_{i,*}^{r,\omega_{i}}(s;s)) \Gamma_{i,*}^{r,\omega_{i}}(s)\md s.
	\end{equation}
\end{lem}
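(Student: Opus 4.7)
The plan is to work directly from the probabilistic representation
\[
V_{*}(r,\omega_{1},\omega_{2})=\E\lb[\int_{r}^{T}\lb|\Theta_{1,*}^{r,\omega_{1}}(t;t)-\Theta_{2,*}^{r,\omega_{2}}(t;t)\rb|^{2}\md t\rb],
\]
differentiate under the expectation, and then recover the PDE via the functional It\^o formula of Lemma~\ref{lem-ito} along the synchronous flow. The key structural point that makes everything tractable is that in the additive-noise setting $\omega_{i}$ enters the equation for $\Theta_{i,*}^{r,\omega_{i}}(\cdot\,;t)$ only through the initial datum $\omega_{i}(t)$, so the dependence on $\omega_{i}$ decouples between $i=1$ and $i=2$.

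First I would identify the first variation. Perturbing $\omega_{i}\mapsto \omega_{i}+\varepsilon\eta_{i}$ and differentiating at $\varepsilon=0$ gives a linear Volterra equation
\[
\la \eta_{i},\Gamma_{i,*}^{r,\omega_{i}}(t)\ra=\eta_{i}(t)+\int_{r}^{t}b_{i}'(\Theta_{i,*}^{r,\omega_{i}}(s;s))\,\la \eta_{i},\Gamma_{i,*}^{r,\omega_{i}}(s)\ra\md s,
\]
which is precisely the testing of \eqref{eqn-gam} against $\eta_{i}$. This linear scalar equation has a unique solution with explicit Duhamel representation in terms of $b_{i}'(\Theta_{i,*}(\cdot\,;\cdot))$, so $\Gamma_{i,*}^{r,\omega_{i}}(t)$ is a well-defined random linear functional on $C([0,T];\bbR)$. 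The moment bound $\E[\sup_{t}|\Theta_{i,*}(t;t)|^{p}]<\infty$ from Lemma~\ref{lem-prior}, combined with boundedness of $b_{i}'$, yields uniform $L^{p}$ bounds for $\|\Gamma_{i,*}^{r,\omega_{i}}(t)\|_{C([0,T];\bbR)^{*}}$ that are independent of $(\omega_{1},\omega_{2})$ and of $r$.

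Next I would differentiate $V_{*}$. Since $\Theta_{1,*}$ depends only on $\omega_{1}$ and $\Theta_{2,*}$ only on $\omega_{2}$, the chain rule applied to
$|\Theta_{1,*}(t;t)-\Theta_{2,*}(t;t)|^{2}$ gives
\[
\la\eta_{i},\partial_{\omega_{i}}V_{*}(r,\omega_{1},\omega_{2})\ra=2(-1)^{i+1}\E\lb[\int_{r}^{T}\bigl(\Theta_{1,*}^{r,\omega_{1}}(t;t)-\Theta_{2,*}^{r,\omega_{2}}(t;t)\bigr)\la\eta_{i},\Gamma_{i,*}^{r,\omega_{i}}(t)\ra\md t\rb]
\]
and, differentiating once more, the only mixed contribution is
\[
\la(\eta_{1},\eta_{2}),\partial^{2}_{\omega_{1}\omega_{2}}V_{*}(r,\omega_{1},\omega_{2})\ra=-2\E\lb[\int_{r}^{T}\la\eta_{1},\Gamma_{1,*}^{r,\omega_{1}}(t)\ra\la\eta_{2},\Gamma_{2,*}^{r,\omega_{2}}(t)\ra\md t\rb],
\]
which is \eqref{eqn-cross}. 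Differentiability is legitimised by dominated convergence together with the $L^{p}$ bounds on $\Theta_{i,*}$ and $\Gamma_{i,*}$. The growth estimate (i) of Lemma~\ref{lem-ito} then follows directly by Cauchy--Schwarz on these representations.

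The technically delicate step is verifying the modulus-of-continuity condition (ii) of Lemma~\ref{lem-ito}. Writing the difference of cross-derivatives at $(\omega_{1},\omega_{2})$ and $(\omega_{1}',\omega_{2}')$ and splitting telescopically, one needs bounds of the form
\[
\E\lb[\sup_{t\in[r,T]}\|\Gamma_{i,*}^{r,\omega_{i}}(t)-\Gamma_{i,*}^{r,\omega_{i}'}(t)\|_{*}^{2}\rb]^{1/2}\leq C\,\varrho_{i}\bigl(\|\omega_{i}-\omega_{i}'\|_{\infty}\bigr)+C\|\omega_{i}-\omega_{i}'\|_{\infty},
\]
which, by Gr\"onwall applied to the linear Volterra equation for the difference, reduces to estimating $b_{i}'(\Theta_{i,*}^{r,\omega_{i}})-b_{i}'(\Theta_{i,*}^{r,\omega_{i}'})$; here the uniform continuity of $b_{i}''$ with modulus $\varrho_{i}$ (Assumption~\ref{asmp-fsde}) together with the stability estimate for $\Theta_{i,*}$ in $\omega_{i}$ (which I expect to be precisely what Section~\ref{sec-estimate} establishes) give exactly the required modulus. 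I expect this to be the main obstacle, since one needs uniformity in the trajectory-valued arguments and the singular kernel $k_{i}$.

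Finally, to obtain the PDE~\eqref{eqn-v*} I would invoke the flow property of $(\Theta_{1,*},\Theta_{2,*})$, which yields the martingale identity
\[
V_{*}(r,\omega_{1},\omega_{2})=\E\lb[\int_{r}^{t}|\Theta_{1,*}^{r,\omega_{1}}(s;s)-\Theta_{2,*}^{r,\omega_{2}}(s;s)|^{2}\md s + V_{*}\bigl(t,\Theta_{1,*}^{r,\omega_{1}}(t;\cdot),\Theta_{2,*}^{r,\omega_{2}}(t;\cdot)\bigr)\rb].
\]
Applying Lemma~\ref{lem-ito} to $V_{*}$ on $[r,t]$ (which is justified by steps above, noting that the mild singularity of $k_{i}$ at $s=0$ is harmless on any interval $[r,t]$ with $r>0$ and can be handled by a limiting argument $r\downarrow 0$), taking expectations to kill the stochastic integrals, and differentiating in $t$ at $t=r$, we arrive at
\[
(\partial_{t}+\cL_{1}+\cL_{2})V_{*}(r,\omega_{1},\omega_{2})+\la(k_{1}(\cdot,r),k_{2}(\cdot,r)),\partial^{2}_{\omega_{1}\omega_{2}}V_{*}(r,\omega_{1},\omega_{2})\ra=-|\omega_{1}(r)-\omega_{2}(r)|^{2},
\]
with terminal condition $V_{*}(T,\cdot,\cdot)=0$, as claimed. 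Assumption~\ref{asmp-mono} plays no role at this stage; it is only invoked later to show that the cross-derivative term so represented has the sign needed for the verification theorem.
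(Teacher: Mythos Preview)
Your overall strategy matches the paper's: compute the Fr\'echet derivatives of $V_{*}$ via variation processes, verify the regularity hypotheses of Lemma~\ref{lem-ito}, and then combine the flow property with the functional It\^o formula to read off the PDE. The cross-derivative formula \eqref{eqn-cross} and the martingale-identity argument at the end are exactly as in the paper.

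There is, however, a genuine gap. You only compute and estimate the mixed derivative $\partial^{2}_{\omega_{1}\omega_{2}}V_{*}$, but both the hypotheses of Lemma~\ref{lem-ito} and the operators $\cL_{i}$ in the PDE require the \emph{diagonal} second derivatives $\partial^{2}_{\omega_{i}\omega_{i}}V_{*}$ as well. Since $\partial_{i}c(x,y)=\pm 2(x-y)$ is not constant, differentiating $\partial_{\omega_{i}}V_{*}$ in $\omega_{i}$ produces, in addition to the $\Gamma_{i,*}\otimes\Gamma_{i,*}$ term, a contribution from the \emph{second} variation process
\[
\Xi_{i,*}^{r,\omega_{i}}(t)=\int_{r}^{t}b_{i}'(\Theta_{i,*}^{r,\omega_{i}}(s;s))\,\Xi_{i,*}^{r,\omega_{i}}(s)\md s+\int_{r}^{t}b_{i}''(\Theta_{i,*}^{r,\omega_{i}}(s;s))\,\Gamma_{i,*}^{r,\omega_{i}}(s)\otimes\Gamma_{i,*}^{r,\omega_{i}}(s)\md s,
\]
which you never introduce. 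It is precisely in controlling $\Xi_{i,*}^{r,\omega_{i}}-\Xi_{i,*}^{r,\omega_{i}'}$ that the modulus of continuity $\varrho_{i}$ of $b_{i}''$ from Assumption~\ref{asmp-fsde} is actually used (the paper's Proposition~\ref{prop-2}); by contrast, the difference $\Gamma_{i,*}^{r,\omega_{i}}-\Gamma_{i,*}^{r,\omega_{i}'}$ is already Lipschitz in $\omega_{i}$ from boundedness of $b_{i}''$ alone, so your invocation of $\varrho_{i}$ there is misplaced. Without the $\Xi$-analysis you cannot verify condition~(ii) of Lemma~\ref{lem-ito} for $\partial^{2}_{\omega_{i}\omega_{i}}$, and hence cannot apply the functional It\^o formula to $V_{*}$.
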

To not distract the readers, we postpone the proof of this technical result to Section \ref{sec-estimate} and continue with the main line of our results.

\begin{thm}
	Under Assumptions \ref{asmp-fsde}, \ref{asmp-k}, and \ref{asmp-mono}, \(V_{*}\) is a classical solution to the path-dependent HJB equation \eqref{eqn-hjb}.
	Moreover, \(V_{*}\) coincides with the value function \(v\), and in particular, the adapted Wasserstein distance is given by \(\AW_{2}(X_{1},X_{2})= V_{*}(0,x_{1}\1_{[0,T]},x_{2}\1_{[0,T]})^{1/2}\).
\end{thm}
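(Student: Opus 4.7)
The plan is to establish the two claims in order: first, that $V_*$ solves the HJB equation \eqref{eqn-hjb}; second, that $V_* = v$, from which the adapted Wasserstein identity follows by evaluating at $(0, x_1\1_{[0,T]}, x_2\1_{[0,T]})$.

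\textbf{Step 1 (HJB equation).} By Lemma \ref{lem-reg}, $V_*$ already satisfies \eqref{eqn-v*}, which agrees with \eqref{eqn-hjb} precisely when the infimum in $\cH V_*$ is attained at $a = 1$, i.e., when
\[
\la (k_1(\cdot,r), k_2(\cdot,r)), \partial^2_{\omega_1\omega_2} V_*(r,\omega_1,\omega_2)\ra \leq 0.
\]
Using the probabilistic representation \eqref{eqn-cross}, this reduces to showing that $g_i(t) := \la k_i(\cdot, r), \Gamma_{i,*}^{r,\omega_i}(t)\ra \geq 0$ for each $i$ and $t \in [r,T]$. Pairing \eqref{eqn-gam} with $k_i(\cdot, r)$, $g_i$ satisfies the linear Volterra equation $g_i(t) = k_i(t, r) + \int_r^t b_i'(\Theta_{i,*}^{r,\omega_i}(s;s)) g_i(s)\md s$ (since $\sigma_i \equiv 1$ here), whose unique solution admits two equivalent representations:
\begin{align*}
g_i(t) &= k_i(t, r) + \int_r^t e^{\int_s^t b_i'(\Theta_{i,*}(u;u))\md u} b_i'(\Theta_{i,*}(s;s)) k_i(s, r)\md s \\
       &= e^{\int_r^t b_i'(\Theta_{i,*}(u;u))\md u} k_i(r, r) + \int_r^t e^{\int_s^t b_i'(\Theta_{i,*}(u;u))\md u} \partial_s k_i(s, r)\md s,
\end{align*}
the second obtained by integration by parts in $s$, which is legitimate thanks to the $C^1$-regularity of $k_i(\cdot, r)$ from Assumption \ref{asmp-k}. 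Non-negativity of $g_i$ now follows from the first representation under Assumption \ref{asmp-mono}(i) (which gives $b_i' \geq 0$ since $\sigma_i\equiv1$, combined with $k_i \geq 0$), and from the second representation under Assumption \ref{asmp-mono}(ii) (which gives $\partial_s k_i(s, r) \geq 0$, while $k_i(r,r)\geq 0$ holds always).

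\textbf{Step 2 (Verification).} For an arbitrary $u \in \scrU_{[r,T]}$, I extend the functional It\^o formula of Lemma \ref{lem-ito} to the controlled flow $(\Theta_1^{r,\omega_1}, \Theta_2^{r,\omega_2,u})$; the only modification is that $\md [B_1, B_2^u]_s = \sin(u(s))\md s$, so the cross-term picks up a factor $\sin(u(s))$. Applied to $V_*$, combined with \eqref{eqn-v*} and the terminal condition $V_*(T, \cdot) = 0$, taking expectations (the martingale parts vanish by the growth bounds of Lemma \ref{lem-reg}) yields
\begin{align*}
V_*(r, \omega_1, \omega_2) &= E\lb[\int_r^T |\Theta_1(s;s) - \Theta_2^u(s;s)|^2\md s\rb] \\
                           &\qquad + E\lb[\int_r^T (1 - \sin(u(s))) \la (k_1(\cdot,s), k_2(\cdot,s)), \partial^2_{\omega_1\omega_2} V_*\ra \md s\rb].
\end{align*}
By Step 1, the cross-derivative is $\leq 0$, while $1 - \sin(u) \geq 0$, so the second term is non-positive. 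Hence $V_*(r, \omega_1, \omega_2) \leq E[\int_r^T |\Theta_1 - \Theta_2^u|^2 \md s]$ for every admissible $u$, giving $V_* \leq v$. The reverse inequality is immediate since $u \equiv \pi/2$ is admissible and produces the synchronous coupling by definition. Therefore $V_* = v$, and the claimed identity $\AW_2(X_1, X_2) = V_*(0, x_1\1_{[0,T]}, x_2\1_{[0,T]})^{1/2}$ follows.

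\textbf{Main obstacle.} The delicate point is Step 1: the dichotomy in Assumption \ref{asmp-mono} corresponds precisely to which of the two representations of $g_i$ is used, and neither alone would be sufficient in general. A secondary technicality is that the functional It\^o formula of \citet{viens19martingale} requires $r > 0$ owing to the kernel singularity at the origin; this is handled by running the argument on $[r,T]$ for arbitrary $r > 0$ and then passing to the limit $r \downarrow 0$, using the continuity of $V_*$ in its first argument that follows from Lemma \ref{lem-reg} and the estimates of Section \ref{sec-estimate}.
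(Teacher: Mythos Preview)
Your proof is correct and follows essentially the same approach as the paper: reducing the HJB claim via Lemma~\ref{lem-reg} to the sign of the cross-derivative, then analyzing $g_i(t)=\la k_i(\cdot,r),\Gamma_{i,*}^{r,\omega_i}(t)\ra$ through the two representations (direct and integrated-by-parts) corresponding to the two cases of Assumption~\ref{asmp-mono}, and finally running the standard verification argument via the functional It\^o formula. Your presentation is slightly more explicit than the paper's (you display the $(1-\sin u)$ factor and retain the boundary term $k_i(r,r)$, which in fact vanishes under Assumption~\ref{asmp-k}), but the substance is identical.
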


\begin{rmk}
	We point out that a similar stochastic control approach was taken in \citet{bion-nadal19Wassersteintype} where they rely on the regularity and well-posedness of nonlinear parabolic equations.
	However, to the best of knowledge, there is no well-posedness result for nonlinear \emph{functional} parabolic equations on Banach space which can be directly applied to our setting.
	Our estimates are based on probabilistic methods.
	We manage to show the existence of the classical solution to the path-dependent HJB equation by a direct construction.
	It is interesting and challenging to build a viscosity solution theory of this type of path-dependent HJB equations.
	We leave this as a future research direction.
\end{rmk}
\begin{rmk}
	\label{rmk-gen}
	Following the same line of proof, we can show that for any non-decreasing \(f_{i}\) with bounded first, second, and third derivatives, synchronous coupling is still an optimal coupling for the bicausal optimal transport problem
	\begin{equation*}
		\inf_{\pi\in \Pi_{\bc}(X_{1},X_{2})}\E_{\pi}\lb[\int_{0}^{T}|f_{1}(X_{1}(t))-f_{2}(X_{2}(t))|^{2}\md t\rb].
	\end{equation*}
	Also, see Remark \ref{rmk-c} for more details.
\end{rmk}

\begin{proof}
	We prove  \(V_{*}\) is a classical solution to the HJB equation \eqref{eqn-hjb}.
	By Lemma \ref{lem-reg}, it suffices to verify that \(\la (k_{1}(\cdot,r),k_{2}(\cdot,r)),\partial_{\omega_{1}\omega_{2}}^{2}V_{*}(r,\omega_{1},\omega_{2})\ra\leq ~0\).
	Recall we define \(\Gamma^{r,\omega_{i}}_{i,*}\) in \eqref{eqn-gam}, and it admits a unique solution
	\begin{equation}
		\label{eqn-gam2}
		\Gamma_{i,*}^{r,\omega_{i}}(t)=\delta(t)+ \int_{r}^{t}\exp \lb(\int_{s}^{t}b_{i}'(\Theta_{i,*}^{r,\omega_{i}}(\tau;\tau))\md \tau  \rb) b_{i}'(\Theta_{i,*}^{r,\omega_{i}}(s;s)) \delta(s)\md s.
	\end{equation}
	We discuss two cases in Assumption \ref{asmp-mono} separately.
	If \(b_{i}\) is non-decreasing, from \eqref{eqn-gam2}, we derive \(\la \eta_{i}, \Gamma^{r,\omega_{i} }_{i}\ra\geq 0\) for any \(\eta_{i}\geq 0\).
	Plugging it into \eqref{eqn-cross}, we conclude \(V_{*}\) is a classical solution to HJB equation \eqref{eqn-hjb} as \(k_{i}(\cdot,r)\geq  0\).
	If \(k_{1}(\cdot,r)\) and \(k_{2}(\cdot,r)\) are both non-decreasing, by applying integration by part to \eqref{eqn-gam2}, we derive
	\begin{equation*}
		\la k_{i}(\cdot, r), \Gamma_{i,*}^{r,\omega_{i}}(t)\ra = \int_{r}^{t} \exp\lb(\int_{s}^{t}b_{i}'(\Theta_{i,*}^{r,\omega_{i}}(\tau;\tau))\md \tau\rb) k_{i}(\md s,r)
	\end{equation*}
	have the same sign for \(i=1,2\).
	Therefore, \(V_{*}\) is a classical solution to \eqref{eqn-hjb}.

	We show that \(V_{*}\) coincides with the value function \(v\).
	We fix a control \(u\in \scrU_{[r,T]}\) and, by Lemma~\ref{lem-reg}, we apply functional It\^{o} formula to \(V_{*}(t, \Theta_{1}^{r,\omega_{1}}(t;\cdot),\Theta_{2}^{r,\omega_{2},u}(t;\cdot))\).
	We obtain
	\begin{align*}
		 & \quad V_{*}(r,\omega_{1},\omega_{2})                                                                                                                                                                   \\
		 & = -\E\lb[\int_{r}^{T} (\partial_{t}+\cL_{1}+\cL_{2})V_{*}(t, \Theta_{1}^{r,\omega_{1}}(t;\cdot),\Theta_{2}^{r,\omega_{2},u}(t;\cdot))    \md t \rb]                                                    \\
		 & \quad- \E\lb[\int_{r}^{T}\sin(u(t))\la (k_{1}(t,\cdot),k_{2}(t,\cdot)), \partial_{\omega_{1}\omega_{2}}V_{*}(t, \Theta_{1}^{r,\omega_{1}}(t;\cdot),\Theta_{2}^{r,\omega_{2},u}(t;\cdot))\ra \md t \rb] \\
		 & \leq  -\E\lb[\int_{r}^{T} (\partial_{t}+\cL_{1}+\cL_{2}+\cH)V_{*}(t, \Theta_{1}^{r,\omega_{1}}(t;\cdot),\Theta_{2}^{r,\omega_{2},u}(t;\cdot))    \md t \rb]                                            \\
		 & = \E\lb[\int_{r}^{T}\lb|\Theta_{1}^{r,\omega_{1}}(t;t)-\Theta_{2}^{r,\omega_{2},u}(t;t)\rb|^{2}\md t\rb].
	\end{align*}
	The above inequality follows from the fact that \(V_{*}\) satisfies HJB equation \eqref{eqn-hjb}.
	Therefore, taking infimum over \(\scrU_{[r,T]}\) we deduce
	\begin{equation*}
		V_{*}(r,\omega_{1},\omega_{2})\leq \inf_{u\in\scrU_{[r,T]}} \E\lb[\int_{r}^{T}\lb|\Theta_{1}^{s,\omega_{1}(t)}(t;t)-\Theta_{2}^{s,\omega_{2}(t),u}(t;t)\rb|^{2}\md t\rb]=v(r,\omega_{1},\omega_{2}).
	\end{equation*}
	On the other hand, we notice \(u(r)\equiv \pi/2\) gives an optimal control, and hence \(V_{*}=v\).

\end{proof}

\subsection{Multiplicative noise}
Now we return to \eqref{eqn-fsde} with diffusion coefficient \(\sigma_{i}\) satisfying Assumption \ref{asmp-fsde}.
We write
\begin{equation*}
	g_{i}(x)=\int_{x_{i}}^{x}\frac{1}{\sigma_{i}(\xi)}\md \xi \quad \text{ and } \quad Y_{i}(t)= g_{i}(X_{i}(t)).
\end{equation*}
Notice that under Assumptions \ref{asmp-fsde} and \ref{asmp-k}, \(X_{i}\) and \(Z_{i}\) are \(\alpha\)--H\"older with \(\alpha>1/2\).
This yields, \(Y_{i}\), the Lamperti transform of \(X_{i}\), satisfies
\begin{equation*}
	Y_{i}(t)=\int_{0}^{t} \frac{b_{i}(g_{i}^{-1}(Y_{i}(s)))}{\sigma_{i}(g_{i}^{-1}(Y_{i}(s)))}\md s +Z_{i}(t).
\end{equation*}

\begin{lem}
	\label{lem-mn}
	Under Assumptions \ref{asmp-fsde} and \ref{asmp-k}, we have \(\bF^{X_{i}}=\bF^{B_{i}}\).
\end{lem}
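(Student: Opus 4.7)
My plan is to mirror the additive noise argument (preceding lemma) using the Lamperti transform as the bridge, and prove both inclusions $\mathcal{F}^{X_i}_t \subseteq \mathcal{F}^{B_i}_t$ and $\mathcal{F}^{B_i}_t \subseteq \mathcal{F}^{X_i}_t$ separately.

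For the inclusion $\mathcal{F}^{X_i}_t \subseteq \mathcal{F}^{B_i}_t$, I would invoke the strong well-posedness of \eqref{eqn-fsde} from Lemma \ref{lem-prior}: $X_i$ is a $\bF^{B_i}$-adapted strong solution, so its natural filtration is contained in that of $B_i$.

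For the reverse inclusion, the key observation is that the Lamperti transform $g_i(x) = \int_{x_i}^x \sigma_i(\xi)^{-1}\,d\xi$ is a $C^1$ diffeomorphism of $\bbR$, since by Assumption \ref{asmp-fsde} the diffusion $\sigma_i$ is positive, bounded, and bounded away from zero. Hence $Y_i(t) = g_i(X_i(t))$ is pathwise invertible, giving $\cF^{X_i}_t = \cF^{Y_i}_t$ for every $t\in[0,T]$. Using the transformed equation written just before the lemma,
\begin{equation*}
Z_i(t) = Y_i(t) - \int_0^t \frac{b_i(g_i^{-1}(Y_i(s)))}{\sigma_i(g_i^{-1}(Y_i(s)))}\,\md s,
\end{equation*}
the right-hand side is $\cF^{Y_i}_t$-measurable, so $\cF^{Z_i}_t \subseteq \cF^{Y_i}_t = \cF^{X_i}_t$. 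Finally, because by Assumption \ref{asmp-k} the Volterra representation $Z_i(t) = \int_0^t k_i(t,s)\,\md B_i(s)$ is canonical, Theorem \ref{thm-canonical}(iv) yields $\cF^{B_i}_t = \cF^{Z_i}_t$, completing the chain $\cF^{B_i}_t = \cF^{Z_i}_t \subseteq \cF^{X_i}_t$.

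The only mildly delicate point is the Lamperti identity itself, but that has already been established in the text preceding the lemma via the chain rule for Young integrals (justified because $X_i$ and $Z_i$ are $\alpha$-Hölder with $\alpha > 1/2$ by Lemma \ref{lem-prior}, and $g_i \in C^2$ with derivatives controlled by Assumption \ref{asmp-fsde}). So no serious obstacle remains; the argument is essentially an algebraic manipulation on top of the strong solvability and the canonicity of the Volterra representation.
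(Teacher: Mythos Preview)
Your proposal is correct and follows essentially the same route as the paper: one inclusion from strong well-posedness (Lemma~\ref{lem-prior}), and the reverse via the Lamperti identity $Z_i(t)=Y_i(t)-\int_0^t (b_i/\sigma_i)(g_i^{-1}(Y_i(s)))\,\md s$ together with the canonicity $\cF_t^{B_i}=\cF_t^{Z_i}$, giving the chain $\cF_t^{B_i}=\cF_t^{Z_i}\subseteq\cF_t^{Y_i}\subseteq\cF_t^{X_i}\subseteq\cF_t^{B_i}$. Your added remark that $g_i$ is a $C^1$ diffeomorphism (so $\cF_t^{Y_i}=\cF_t^{X_i}$ directly) is a harmless sharpening of the same argument.
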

\begin{proof}
	By Lemma \ref{lem-prior}, \(X_{i}\) is a strong solution and hence \(\cF_{t}^{X_{i}}\subseteq \cF_{t}^{B_{i}}\) for any \(t\in[0,T]\).
	On the other hand, we notice \(Z_{i}(t)= Y_{i}(t)-\int_{0}^{t} \frac{b_{i}(g_{i}^{-1}(Y_{i}(s)))}{\sigma_{i}(g_{i}^{-1}(Y_{i}(s)))}\md s\), which implies \(\cF_{t}^{Z_{i}}\subseteq \cF_{t}^{Y_{i}}\).
	Therefore, we deduce
	\begin{equation*}
		\cF_{t}^{B_{i}}=\cF_{t}^{Z_{i}}\subseteq \cF_{t}^{Y_{i}}\subseteq \cF_{t}^{X_{i}}\subseteq \cF_{t}^{B_{i}}.
	\end{equation*}
\end{proof}

The above lemma allows us to reduce the adapted Wasserstein distance between \(X_{1}\) and \(X_{2}\) to a bicausal optimal transport problem between \(Y_{1}\) and \(Y_{2}\) as
\begin{equation*}
	\AW_{2}(X_{1},X_{2})^{2}=\inf_{\pi\in\Pi_{\bc}(Y_{1},Y_{2})}\E\lb[\int_{0}^{T} |g_{1}^{-1}(Y_{1}(t))-g_{2}^{-1}(Y_{2}(t))|^{2}\md t\rb].
\end{equation*}

We construct \((\tilde{b}_{i},\tilde{\sigma}_{i})=\lb(\frac{b_{i}\circ g_{i}^{-1}}{\sigma_{i}\circ g_{i}^{-1}},1\rb)\).
A direct calculation gives \(\tilde{b}_{i}'=b_{i}'\circ g_{i}^{-1}-\frac{(b_{i}\circ g_{i}^{-1})( \sigma_{i}' \circ g_{i}^{-1})}{\sigma_{i}\circ g_{i}^{-1}}\) and \[\tilde{b}_{i}''= (b_{i}''\circ g_{i}^{-1})(\sigma\circ g_{i}^{-1}) -(b_{i}'\circ g_{i}^{-1})( \sigma_{i}' \circ g_{i}^{-1}) -(b_{i}\circ g_{i}^{-1})( \sigma_{i}'' \circ g_{i}^{-1}) + \frac{(b_{i}\circ g_{i}^{-1})( \sigma_{i}' \circ g_{i}^{-1})^{2}}{\sigma_{i}\circ g_{i}^{-1}}.\]
If \(b_{i}\) were bounded,  we could verify \((\tilde{b}_{i},\tilde{\sigma}_{i})\) satisfies Assumptions \ref{asmp-fsde}, and \((\tilde{b_{i}}/\tilde{\sigma_{i}})\) is non-decreasing if \((b_{i}/\sigma_{i})\) is.
Applying Remark \ref{rmk-gen} we could conclude the proof of Theorem \ref{thm-fsde}.
For unbounded \(b_{i}\), we take a sequence of functions \(b^{n}_{i}\in C^{2}_{b}\)  satisfying Assumption \ref{asmp-fsde} and converging to \(b_{i}\)  pointwise.
In particular, we can assume \(b_{i}^{n}=b_{i}\) on \([-n,n]\), and \(|(b^{n}_{i})'|\leq |b_{i}'|\leq L\).
We define
\begin{equation*}
	X^{n}_{i}(t)= x_{i} + \int_{0}^{t} b^{n}_{i}(X^{n}_{i}(s))\md s + \int_{0}^{t}\sigma_{i}(X^{n}_{i}(s))\md Z_{i}(s).
\end{equation*}
By the triangle inequality, we obtain
\begin{equation*}
	\AW_{2}(X_{1}^{n},X_{2}^{n})\leq \AW_{2}(X_{1},X_{2})+ \AW_{2}(X_{1}^{n},X_{1})+ \AW_{2}(X_{2}^{n},X_{2}).
\end{equation*}
In order to show the synchronous coupling is optimal, we only need to show \(\AW_{2}(X_{i}^{n},X_{i})\) goes to 0 since the synchronous coupling is already optimal between \(X^{n}_{1}\) and \(X^{n}_{2}\) by previous arguments.

\begin{lem}
	Under Assumptions \ref{asmp-fsde} and \ref{asmp-k}, we have \(\lim_{n\to\infty}\AW_{2}(X_{i}^{n},X_{i})= 0\).
\end{lem}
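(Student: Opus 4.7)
The plan is to exploit the synchronous coupling, which by Lemma~\ref{lem-mn} is bicausal since $\bF^{X_i^n} = \bF^{B_i} = \bF^{X_i}$. Driving both $X_i^n$ and $X_i$ by the same Brownian motion $B_i$ yields
\begin{equation*}
\AW_2(X_i^n, X_i)^2 \le \E\left[\int_0^T |X_i^n(t) - X_i(t)|^2 \md t\right],
\end{equation*}
so it suffices to prove $L^2(\Omega \times [0,T])$-convergence of $X_i^n$ to $X_i$.

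To sidestep the Young integral, I would apply the Lamperti transform $Y_i^n := g_i(X_i^n)$, $Y_i := g_i(X_i)$; under Assumption~\ref{asmp-fsde} this map is bi-Lipschitz, so $L^2$-convergence of $Y_i^n$ to $Y_i$ is enough. Both $Y_i^n$ and $Y_i$ satisfy additive-noise equations driven by the \emph{same} $Z_i$, so the difference $e_n(t) := Y_i^n(t) - Y_i(t)$ solves the pathwise ODE
\begin{equation*}
e_n(t) = \int_0^t \left[\tilde b_i^n(Y_i^n(s)) - \tilde b_i(Y_i(s))\right] \md s, \qquad \tilde b_i^n := \frac{b_i^n \circ g_i^{-1}}{\sigma_i \circ g_i^{-1}},
\end{equation*}
with the stochastic noise cancelling entirely. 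Since $b_i^n = b_i$ on $[-n,n]$ and $|(b_i^n)'| \le L$, the estimate $|b_i^n(x)| \le C(1+|x|)$ holds uniformly in $n$, from which a short calculation shows that the local Lipschitz constant of $\tilde b_i^n$ at $y$ is at most $L + C(1+|g_i^{-1}(y)|)$, again uniformly in $n$. Splitting $\tilde b_i^n(Y_i^n) - \tilde b_i(Y_i) = [\tilde b_i^n(Y_i^n) - \tilde b_i^n(Y_i)] + [\tilde b_i^n(Y_i) - \tilde b_i(Y_i)]$ and invoking Gronwall's lemma yields
\begin{equation*}
\sup_{t \le T} |e_n(t)| \le \exp\left(C\int_0^T \bigl(1 + |Y_i^n(s)| + |Y_i(s)|\bigr)\, \md s\right) \int_0^T \psi_n(s)\, \md s,
\end{equation*}
where $\psi_n(s) := |\tilde b_i^n(Y_i(s)) - \tilde b_i(Y_i(s))|$.

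The main remaining step is to pass to the limit in this product via a Vitali / dominated-convergence argument. A uniform-in-$n$ a priori moment bound $\sup_n \E[\sup_{t \le T}|X_i^n|^p] < \infty$ for every $p \ge 1$ should follow from the same Young-integral estimates underlying Lemma~\ref{lem-prior}, since the constants therein depend only on $T$, the linear-growth constant of $b_i^n$, and the Lipschitz properties of $\sigma_i$, all of which are uniform in $n$. This controls the exponential factor in every $L^p$ uniformly in $n$. For the drift-discrepancy factor, $\psi_n$ vanishes on $\{|g_i^{-1}(Y_i(s))| \le n\}$ and is pointwise dominated by $C(1+|Y_i(s)|) \in L^p$, so $\psi_n \to 0$ almost surely and in every $L^p$ by dominated convergence. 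Combining the two bounds via H\"older's inequality gives $\E[\sup_{t \le T}|e_n(t)|^2] \to 0$, whence $\E[\int_0^T |X_i^n - X_i|^2 \md t] \to 0$ by the Lipschitz continuity of $g_i^{-1}$. The most delicate point is verifying that the a priori moment bound is genuinely uniform in $n$ --- this is precisely where the uniform linear growth of $b_i^n$ is essential.
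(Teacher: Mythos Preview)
Your overall strategy coincides with the paper's: synchronous coupling (bicausal since $\bF^{X_i^n}=\bF^{B_i}=\bF^{X_i}$), Lamperti transform to additive noise, and a Gronwall estimate on $e_n=Y_i^n-Y_i$. You are in fact more careful than the paper on one point: the paper applies Gronwall with a global Lipschitz constant $L$ for $\tilde b_i^n$ independent of $n$, whereas $(\tilde b_i^n)'=(b_i^n)'\circ g_i^{-1}-(b_i^n\circ g_i^{-1})(\sigma_i'\circ g_i^{-1})/(\sigma_i\circ g_i^{-1})$ contains the term $b_i^n\circ g_i^{-1}$, whose sup-norm is of order $n$. Your local-Lipschitz Gronwall, producing the random factor $\exp\bigl(C\int_0^T(1+|Y_i^n|+|Y_i|)\,\md s\bigr)$, is the honest bound.

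However, your argument for controlling that factor has a genuine gap. You assert that $\sup_n\E[\sup_t|X_i^n|^p]<\infty$ for all $p$ ``controls the exponential factor in every $L^p$ uniformly in $n$.'' This is false: finiteness of all polynomial moments of a nonnegative random variable $W$ does not imply $\E[e^{cW}]<\infty$ for any $c>0$. What you need is a uniform \emph{exponential} moment of $\sup_t|Y_i^n(t)|$, and Lemma~\ref{lem-prior} only supplies polynomial ones.

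The repair is short but must be made explicit. From the additive equation and the uniform linear growth $|\tilde b_i^n(y)|\le C(1+|y|)$, a pathwise Gronwall gives $\sup_t|Y_i^n(t)|\le C'(1+\sup_t|Z_i(t)|)$ with $C'$ independent of $n$. Since $Z_i$ is a continuous Gaussian process, Fernique's theorem yields $\E[\exp(c\sup_t|Z_i(t)|)]<\infty$ for every $c>0$, hence the required uniform exponential moments. With this ingredient your H\"older/Vitali step goes through; without it the proof is incomplete.
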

\begin{proof}

	By Lemma \ref{lem-mn}, we have \(\bF^{X_{i}^{n}}=\bF^{Z_{i}^{n}}\), and hence the synchronous coupling \(\pi_{\sync}\) between  \(Z_{i}^{n}\) and \(Z_{i}\) is a bicausal coupling between \(X^{n}_{i}\) and \(X_{i}\).
	We write \(Y^{n}_{i}= g_{i}(X^{n}_{i})\) and \(\tilde{b}^{n}_{i}=\frac{b^{n}_{i}\circ g_{i}^{-1}}{ \sigma_{i}\circ g_{i}^{-1}}\).
	By our construction of \(b^{n}_{i}\), we have \(\tilde{b}^{n}_{i}=\tilde{b}_{i}\) on \([-n,n]\) and \(|(b^{n}_{i})'|\leq |b_{i}'|\leq L\).
	Without loss of generality, we may assume \(|b^{n}_{i}(x)|+|b_{i}(x)|\leq L(1+|x|)\) for possibly larger \(L\).

	Since \(\sigma_{i}\) is bounded and bounded away from 0, we derive that
	\begin{equation*}
		\AW_{2}(X_{i}^{n},X_{i})^{2}\leq \E_{\pi_{\sync}}[\|X_{i}^{n}-X_{i}\|^{2}=\E_{\pi_{\sync}}[\|g_{i}^{-1}(Y_{i}^{n})-g_{i}^{-1}(Y_{i})\|^{2}]\leq C  \E_{\pi_{\sync}}[\|Y_{i}^{n}-Y_{i}\|^{2}].
	\end{equation*}
	Therefore, it suffices to show \(Y^{n}_{i}\) converges to \(Y_{i}\) in \(H\)  in \(L^{2}\).
	Notice under \(\pi_{\sync}\), we have
	\begin{align*}
		|Y^{n}_{i}(t)-Y_{i}(t)|^{2} & \leq 2\lb(\int_{0}^{t} |\tilde{b}_{i}^{n}(Y_{i}^{n}(s)) -\tilde{b}_{i}^{n}(Y_{i}(s))|\md s\rb)^{2}+2\lb(\int_{0}^{t} |\tilde{b}_{i}^{n}(Y_{i}(s))-\tilde{b}_{i}(Y_{i}(s))|\md s\rb)^{2} \\
		                            & \leq 2TL^{2} \int_{0}^{t} |Y^{n}_{i}(s)-Y_{i}(s)|^{2}\md s + 2T\int_{0}^{t} |\tilde{b}^{n}_{i}(Y_{i}(s))-\tilde{b}_{i}(Y_{i}(s))|^{2}\1_{\{|Y_{i} (s)|\geq n\}}\md s                    \\
		                            & \leq 2TL^{2}\int_{0}^{t} |Y^{n}_{i}(s)-Y_{i}(s)|\md s + 2TL^{2}\int_{0}^{t}(1+|Y_{i}(s)|)^{2} \1_{\{|Y_{i} (s)|\geq n\}}\md s.
	\end{align*}
	By Gronwall's inequality, we obtain
	\begin{equation*}
		\E_{\pi_{\sync}}[\|Y_{i}^{n}-Y_{i}\|^{2}]\leq C\int_{0}^{T}\E_{\pi_{\sync}}\lb[|Y_{i}(s)|^{2}\1_{\{|Y_{i} (s)|\geq n\}}\rb]\md s.
	\end{equation*}
	By Lemma \ref{lem-prior}, \(Y_{i}\) is in \(L^{2}\), and hence we derive the \(L^{2}\) convergence of \(Y_{i}^{n}\).
\end{proof}

\section{Some additional estimates}
\label{sec-estimate}
Recall
\begin{equation}
	\label{eqn-theta-ode}
	\Theta_{i,*}^{r,\omega_{i}}(\cdot\,;t)=\omega_{i}(t)+ \int_{r}^{\cdot}b_{i}(\Theta_{i,*}^{r,\omega_{i}}(s;s))\md s + \int_{r}^{\cdot}k_{i}(t,s) \md B_{1}(s),
\end{equation}
and
\begin{equation}
	\label{eqn-gamma-ode}
	\Gamma_{i,*}^{r,\omega_{i}}(t)=\delta(t)+ \int_{r}^{t} b_{i}'(\Theta_{i,*}^{r,\omega_{i}}(s;s)) \Gamma_{i,*}^{r,\omega_{i}}(s)\md s.
\end{equation}
\begin{prop}
	\label{prop-theta}
	Let \(s\in[r,T]\) and \(\eta\in C([0,T];\bbR)\). Under Assumptions \ref{asmp-fsde} and \ref{asmp-k}, the following estimates hold with a deterministic constant \(C\) independent of \(\omega_{i}\) and \(\eta\)
	\begin{equation*}
		\sup_{t\in[r,T]}\E[|\Theta_{i,*}^{r,\omega_{i}}(t;t)|]\leq C(1+\|\omega_{i}\|_{\infty}) \text{ and } \sup_{t\in[r,T]}|\Theta^{r,\omega_{i}+\eta}_{i,*}(t;t)-\Theta_{i,*}^{r,\omega_{i}}(t;t)| \leq C  \|\eta\|_{\infty}.
	\end{equation*}
\end{prop}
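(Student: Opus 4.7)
The plan is to reduce both estimates to standard Gronwall-type arguments once we restrict equation \eqref{eqn-theta-ode} to the diagonal. Setting $s=t$, the process $X(t) := \Theta_{i,*}^{r,\omega_{i}}(t;t)$ satisfies the fractional SDE
\begin{equation*}
X(t) = \omega_{i}(t) + \int_{r}^{t} b_{i}(X(s))\md s + \int_{r}^{t} k_{i}(t,s)\md B_{1}(s),
\end{equation*}
which sits in exactly the framework of Lemma \ref{lem-prior}. Crucially, $\omega_{i}$ enters only as an additive time-dependent shift and the stochastic Volterra integral is a deterministic functional of the driving Brownian motion, independent of $\omega_{i}$.

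For the first estimate, I would exploit the linear growth $|b_{i}(x)| \leq |b_{i}(0)| + L|x|$ coming from Assumption \ref{asmp-fsde}, giving
\begin{equation*}
\E[|X(t)|] \leq \|\omega_{i}\|_{\infty} + |b_{i}(0)|T + L\int_{r}^{t}\E[|X(s)|]\md s + \E\!\lb[\lb|\textstyle\int_{r}^{t}k_{i}(t,s)\md B_{1}(s)\rb|\rb].
\end{equation*}
The stochastic integral is controlled via It\^o isometry and Assumption \ref{asmp-k}: $\int_{r}^{t}k_{i}(t,s)^{2}\md s \leq C\int_{0}^{t} s^{1-2H}(t-s)^{2H-1}\md s$, which is a Beta-type integral finite for $H\in(1/2,1)$ and bounded uniformly in $t\in[r,T]$. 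A Gronwall argument applied to $t\mapsto \E[|X(t)|]$ then yields the desired bound $C(1+\|\omega_{i}\|_{\infty})$.

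For the second estimate, observe that $\Theta_{i,*}^{r,\omega_{i}+\eta}(\cdot\,;t)$ and $\Theta_{i,*}^{r,\omega_{i}}(\cdot\,;t)$ are driven by the same Brownian motion with the same deterministic kernel $k_{i}$, so the Volterra integrals cancel upon subtraction. Evaluating at $s=t$, we get pathwise
\begin{equation*}
\Theta_{i,*}^{r,\omega_{i}+\eta}(t;t)-\Theta_{i,*}^{r,\omega_{i}}(t;t) = \eta(t) + \int_{r}^{t}\lb[b_{i}(\Theta_{i,*}^{r,\omega_{i}+\eta}(s;s))-b_{i}(\Theta_{i,*}^{r,\omega_{i}}(s;s))\rb]\md s.
\end{equation*}
Since $b_{i}$ is globally $L$-Lipschitz, a deterministic (pathwise) Gronwall argument gives the bound $\|\eta\|_{\infty}\, e^{L(T-r)} \leq C\|\eta\|_{\infty}$.

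The only delicate point is the integrability of the Volterra kernel near $s=0$ when $r=0$, but this is built into Assumption \ref{asmp-k} through the explicit $s^{1/2-H}|t-s|^{H-1/2}$ bound, which guarantees square-integrability for $H\in(1/2,1)$. Beyond this, both estimates are entirely routine, and in fact the pathwise cancellation of the stochastic integrals in the second estimate is what makes $V_{*}$ amenable to differentiation with respect to $\omega_{i}$ in Lemma \ref{lem-reg}.
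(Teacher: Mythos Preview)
Your proposal is correct and follows essentially the same approach as the paper, which simply states that the estimates follow from Gronwall's inequality and the boundedness of \(b_i'\); you have merely filled in the details (the It\^o isometry bound on the Volterra integral via Assumption~\ref{asmp-k} for the first estimate, and the pathwise cancellation of the stochastic integrals for the second).
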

\begin{proof}
	It follows directly from the Gronwall inequality and the boundedness of \(b'\).
\end{proof}

\begin{prop}
	\label{prop-gamma}
	Let \(s\in[r,T]\) and \(\tilde{\eta},\eta\in C([0,T];\bbR)\). Under Assumptions \ref{asmp-fsde} and \ref{asmp-k}, the following estimates hold with a deterministic constant \(C\) independent of \(\omega_{i}\), \(\eta\), and \(\tilde{\eta}\)
	\begin{equation*}
		\sup_{t\in[r,T]}|\la \tilde{\eta},\Gamma_{i,*}^{r,\omega_{i}}(t)\ra|\leq C \|\tilde{\eta}\|_{\infty}\text{ and } \sup_{t\in[r,T]}|\la \tilde \eta, \Gamma_{i,*}^{r,\omega_{i}+\eta}(t)-\Gamma_{i,*}^{r,\omega_{i}}(t)\ra| \leq C  \|\tilde{\eta}\|_{\infty}\|\eta\|_{\infty}.
	\end{equation*}
\end{prop}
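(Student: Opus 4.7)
The plan is to derive both bounds directly from the explicit representation (eqn-gam2), which solves the linear integral equation \eqref{eqn-gamma-ode} as
\begin{equation*}
\la \tilde\eta,\Gamma_{i,*}^{r,\omega_{i}}(t)\ra = \tilde\eta(t)+\int_{r}^{t}\exp\!\lb(\int_{s}^{t}b_{i}'(\Theta_{i,*}^{r,\omega_{i}}(\tau;\tau))\md\tau\rb)b_{i}'(\Theta_{i,*}^{r,\omega_{i}}(s;s))\tilde\eta(s)\md s.
\end{equation*}
Since $b_{i}'$ is bounded by some $L$ under Assumption \ref{asmp-fsde}, the exponential factor is bounded by $e^{LT}$ uniformly in $\omega_{i}$, $s$, $t$, and the integrand is bounded by $L e^{LT}\|\tilde\eta\|_{\infty}$. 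Combining with the boundary term $|\tilde\eta(t)|\leq\|\tilde\eta\|_{\infty}$ gives the first estimate $\sup_{t\in[r,T]}|\la\tilde\eta,\Gamma_{i,*}^{r,\omega_{i}}(t)\ra|\leq C\|\tilde\eta\|_{\infty}$ with $C=1+LTe^{LT}$, independent of $\omega_i$.

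For the second estimate, I would write the difference
\begin{equation*}
\la\tilde\eta,\Gamma_{i,*}^{r,\omega_{i}+\eta}(t)-\Gamma_{i,*}^{r,\omega_{i}}(t)\ra = \int_{r}^{t}\bigl[F^{\omega_{i}+\eta}(s,t)-F^{\omega_{i}}(s,t)\bigr]\tilde\eta(s)\md s,
\end{equation*}
where $F^{\omega_{i}}(s,t):=\exp\!\bigl(\int_{s}^{t}b_{i}'(\Theta_{i,*}^{r,\omega_{i}}(\tau;\tau))\md\tau\bigr)b_{i}'(\Theta_{i,*}^{r,\omega_{i}}(s;s))$. Boundedness of $b_{i}''$ (Assumption \ref{asmp-fsde}) makes $b_{i}'$ Lipschitz; combined with the stability of the flow from Proposition \ref{prop-theta},
\begin{equation*}
\sup_{s\in[r,T]}\bigl|\Theta_{i,*}^{r,\omega_{i}+\eta}(s;s)-\Theta_{i,*}^{r,\omega_{i}}(s;s)\bigr|\leq C\|\eta\|_{\infty},
\end{equation*}
I would bound both the argument of the exponential (using $|e^{x}-e^{y}|\leq e^{L T}|x-y|$ on the relevant range) and the outer factor $b_{i}'(\cdot)$ by $C\|\eta\|_{\infty}$, yielding $|F^{\omega_{i}+\eta}(s,t)-F^{\omega_{i}}(s,t)|\leq C\|\eta\|_{\infty}$ uniformly. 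Integrating against $\tilde\eta$ and taking the supremum in $t$ gives the second estimate.

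Alternatively, and perhaps cleaner, I would linearize directly at the level of the integral equation: setting $\delta\Gamma(t):=\Gamma_{i,*}^{r,\omega_{i}+\eta}(t)-\Gamma_{i,*}^{r,\omega_{i}}(t)$, subtraction yields
\begin{equation*}
\delta\Gamma(t)=\int_{r}^{t}\!\bigl[b_{i}'(\Theta_{i,*}^{r,\omega_{i}+\eta}(s;s))-b_{i}'(\Theta_{i,*}^{r,\omega_{i}}(s;s))\bigr]\Gamma_{i,*}^{r,\omega_{i}+\eta}(s)\md s+\int_{r}^{t}b_{i}'(\Theta_{i,*}^{r,\omega_{i}}(s;s))\,\delta\Gamma(s)\md s.
\end{equation*}
Pairing with $\tilde\eta$, applying the already proved first estimate to the inner factor $\la\tilde\eta,\Gamma_{i,*}^{r,\omega_{i}+\eta}(s)\ra$, and using $\|b_{i}''\|_{\infty}$ together with Proposition \ref{prop-theta} to bound the bracket by $C\|\eta\|_{\infty}$, gives an inequality of the form
\begin{equation*}
|\la\tilde\eta,\delta\Gamma(t)\ra|\leq C\|\tilde\eta\|_{\infty}\|\eta\|_{\infty}+L\int_{r}^{t}|\la\tilde\eta,\delta\Gamma(s)\ra|\md s,
\end{equation*}
from which Gronwall's inequality delivers the claim. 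No step here is genuinely difficult once Proposition \ref{prop-theta} is in hand; the only mild subtlety is keeping track of which factor is being linearized, and in particular not accidentally using $b_{i}''$ beyond its guaranteed boundedness. The uniform continuity modulus $\varrho_{i}$ of $b_{i}''$ is not needed here (it is reserved for the finer $C^{1,2}$ regularity required in Lemma \ref{lem-reg}).
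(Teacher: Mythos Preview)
Your proposal is correct; in particular your ``alternative'' Gronwall argument is precisely the paper's proof, which reads in full: ``It follows directly from the Gronwall inequality and the boundedness of \(b'\) and \(b''\).'' Your first approach via the explicit formula \eqref{eqn-gam2} is also fine but is more than the paper bothers with here.
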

\begin{proof}
	It follows directly from the Gronwall inequality and the boundedness of \(b'\) and \(b''\).
\end{proof}
The following result shows that \(\Gamma_{i,*}^{r,\omega_{i}}\) is the first variation process of \(\Theta_{i,*}^{r,\omega_{i}}\).
\begin{prop}
	\label{prop-1}
	Let \(\eta\in C([0,T];\bbR)\).
	Under Assumptions \ref{asmp-fsde} and \ref{asmp-k}, there exists a deterministic constant \(C\) independent of \(\omega_{i}\) and \(\eta\) such that \[\sup_{t\in[r,T]}|\Theta_{i,*}^{r,\omega_{i}+\eta}(t;t)-\Theta_{i,*}^{r,\omega_{i}}(t;t)-\la \eta, \Gamma_{i,*}^{r,\omega_{i}}(t)\ra |\leq C\|\eta\|_{\infty}^{2}.\]
\end{prop}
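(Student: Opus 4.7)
The plan is to linearize the map $\omega \mapsto \Theta_{i,*}^{r,\omega}$ around $\omega_i$ and identify the linear part with $\la \eta, \Gamma_{i,*}^{r,\omega_i}(\cdot)\ra$, then control the quadratic remainder via Taylor expansion of $b_i$ and a Gronwall argument. The key simplification is that the stochastic integral $\int_r^\cdot k_i(\cdot,s)\,\md B_1(s)$ in \eqref{eqn-theta-ode} does not depend on the initial path, so it cancels when we take the difference $\Theta_{i,*}^{r,\omega_i+\eta}-\Theta_{i,*}^{r,\omega_i}$, reducing the problem to a purely pathwise deterministic ODE estimate.

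Concretely, setting $\Delta(t) := \Theta_{i,*}^{r,\omega_i+\eta}(t;t) - \Theta_{i,*}^{r,\omega_i}(t;t)$, the definition \eqref{eqn-theta-ode} gives the pathwise identity
\[\Delta(t) = \eta(t) + \int_r^t \bigl[b_i(\Theta_{i,*}^{r,\omega_i+\eta}(s;s)) - b_i(\Theta_{i,*}^{r,\omega_i}(s;s))\bigr]\,\md s,\]
while pairing \eqref{eqn-gamma-ode} against the test function $\eta$ (noting $\la \eta, \delta\ra = \eta(t)$) yields
\[G_t := \la \eta, \Gamma_{i,*}^{r,\omega_i}(t)\ra = \eta(t) + \int_r^t b_i'(\Theta_{i,*}^{r,\omega_i}(s;s))\, G_s\,\md s.\]

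Next I apply the second-order Taylor expansion of $b_i$ at $\Theta_{i,*}^{r,\omega_i}(s;s)$ using $b_i''\in L^\infty$:
\[b_i(\Theta_{i,*}^{r,\omega_i+\eta}(s;s)) - b_i(\Theta_{i,*}^{r,\omega_i}(s;s)) = b_i'(\Theta_{i,*}^{r,\omega_i}(s;s))\,\Delta(s) + R(s),\]
with $|R(s)| \leq \tfrac12 \|b_i''\|_\infty |\Delta(s)|^2$. Subtracting the equations for $\Delta$ and $G$ gives
\[\Delta(t) - G_t = \int_r^t b_i'(\Theta_{i,*}^{r,\omega_i}(s;s))\bigl(\Delta(s) - G_s\bigr)\,\md s + \int_r^t R(s)\,\md s.\]

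Finally, Proposition \ref{prop-theta} provides the deterministic bound $\sup_{s\in[r,T]}|\Delta(s)| \leq C\|\eta\|_\infty$, so $|R(s)| \leq C\|\eta\|_\infty^2$ uniformly. Since $b_i'$ is bounded under Assumption \ref{asmp-fsde}, Gronwall's inequality applied to $t\mapsto |\Delta(t)-G_t|$ delivers $\sup_{t\in[r,T]}|\Delta(t)-G_t|\leq C\|\eta\|_\infty^2$ with a deterministic constant $C$ independent of $\omega_i$ and $\eta$, as required. There is no substantive obstacle here — the proof is essentially routine once one observes that the Brownian integrals cancel, leaving a standard first-variation computation that leverages the strong regularity assumptions on $b_i$.
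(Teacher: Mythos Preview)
Your proof is correct and follows essentially the same approach as the paper: both subtract the equations for $\Delta\Theta$ and $\la\eta,\Gamma_{i,*}^{r,\omega_i}\ra$, express the remainder via a Taylor expansion of $b_i$ (the paper writes it in integral mean-value form, you use the Lagrange remainder with $\|b_i''\|_\infty$), and close with Gronwall after invoking the Lipschitz bound $\sup_s|\Delta(s)|\leq C\|\eta\|_\infty$ from Proposition~\ref{prop-theta}.
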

\begin{proof}
	Write \(\Delta \Theta(t)=\Theta_{i,*}^{r,\omega_{i}+\eta}(t;t)-\Theta_{i,*}^{r,\omega_{i}}(t;t)\) and \(R_{1}(t)= \Delta\Theta(t)- \la \eta, \Gamma_{i,*}^{r,\omega_{i}}(t)\ra\).
	Plugging \eqref{eqn-theta-ode} and \eqref{eqn-gamma-ode}, we notice that
	\begin{align*}
		R_{1}(t)= & \int_{r}^{t}b_{i}'(\Theta_{i,*}^{r,\omega_{i}}(s;s))R_{1}(s)\md s                                                                                                                        \\
		          & + \int_{r}^{t} \lb(\int_{0}^{1}\lb[b_{i}'(\Theta_{i,*}^{r,\omega_{i}}(s;s) +\lambda \Delta \Theta(s))-b_{i}' (\Theta_{i,*}^{r,\omega_{i}}(s;s))\rb]\md \lambda \rb)\Delta\Theta(s)\md s.
	\end{align*}
	By Gronwall  inequality and Proposition \ref{prop-theta}, we deduce
	\begin{equation*}
		\sup_{t\in[r,T]}|R_{1}(t)|\leq C  \|\eta\|_{\infty} \int_{r}^{T} \int_{0}^{1}\lb|b_{i}'(\Theta_{i,*}^{r,\omega_{i}}(s;s) +\lambda \Delta \Theta(s))-b_{i}' (\Theta_{i,*}^{r,\omega_{i}}(s;s))\rb|\md \lambda \md s .
	\end{equation*}
	Since \(b_{i}''\) is bounded and \(\sup_{t\in[r,T]}|\Delta\Theta(t)|\leq C \|\eta\|_{\infty}\), we derive \(\sup_{t\in[r,T]}|R_{1}(t)|\leq C \|\eta\|_{\infty}^{2}\).
\end{proof}

We define
\begin{equation}
	\label{eqn-xi}
	\Xi_{i,*}^{r,\omega_{i}}(t)=\int_{r}^{t}\exp \lb(\int_{s}^{t}b_{i}'(\Theta_{i,*}^{r,\omega_{i}}(\tau;\tau))\md \tau  \rb) b_{i}''(\Theta_{i,*}^{r,\omega_{i}}(s;s))\Gamma_{i,*}^{r,\omega_{i}}(s)\otimes\Gamma_{i,*}^{r,\omega_{i}}(s) \md s,
\end{equation}
which is the unique solution to
\begin{equation}
	\label{eqn-xi-ode}
	\Xi_{i,*}^{r,\omega_{i}}(t)=\int_{r}^{t}b_{i}'(\Theta_{i,*}^{r,\omega_{i}}(s;s))\Xi_{i,*}^{r,\omega_{i}}(s)\md s +\int_{r}^{t}b_{i}''(\Theta_{i,*}^{r,\omega_{i}}(s;s))\Gamma_{i,*}^{r,\omega_{i}}(s)\otimes\Gamma_{i,*}^{r,\omega_{i}}(s)\md s.
\end{equation}
\begin{prop}
	\label{prop-2}
	Let \(\eta,\tilde{\eta}\in C([0,T];\bbR)\).
	Under Assumptions \ref{asmp-fsde} and \ref{asmp-k}, there exists a deterministic constant \(C\) independent of \(\omega_{i}\), \(\eta\), and \(\tilde{\eta}\) such that
	\[ \sup_{t\in[r,T]}|\la \tilde{\eta}, \Gamma_{i,*}^{r,\omega_{i}+\eta}(t)- \Gamma_{i,*}^{r,\omega_{i}}(t)\ra-\la (\eta,\tilde{\eta}), \Xi_{i,*}^{r,\omega_{i}}(t)\ra|\leq   C\|\tilde{\eta}\|_{\infty}\|\eta\|_{\infty}\varrho_{i}(\|\eta\|_{\infty}),\]
	where \(\varrho_{i}\) is the modulus of continuity of \(b_{i}''\).
\end{prop}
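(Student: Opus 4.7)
The plan is to differentiate the defining ODE \eqref{eqn-gamma-ode} for $\Gamma^{r,\omega_i}_{i,*}$ with respect to $\omega_i$ once more. Set $\Delta\Gamma(t) := \Gamma^{r,\omega_i+\eta}_{i,*}(t) - \Gamma^{r,\omega_i}_{i,*}(t)$ and $\Delta\Theta(s) := \Theta^{r,\omega_i+\eta}_{i,*}(s;s) - \Theta^{r,\omega_i}_{i,*}(s;s)$. Subtracting the equations \eqref{eqn-gamma-ode} for parameters $\omega_i+\eta$ and $\omega_i$ and pairing with $\tilde\eta$ yields
\begin{equation*}
\la \tilde\eta, \Delta\Gamma(t)\ra = \int_r^t b_i'(\Theta^{r,\omega_i}_{i,*}(s;s)) \la \tilde\eta, \Delta\Gamma(s)\ra \md s + \int_r^t \bigl[b_i'(\Theta^{r,\omega_i+\eta}_{i,*}(s;s)) - b_i'(\Theta^{r,\omega_i}_{i,*}(s;s))\bigr]\la \tilde\eta, \Gamma^{r,\omega_i+\eta}_{i,*}(s)\ra \md s.
\end{equation*}

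Next, I will Taylor-expand $b_i'$ around $\Theta^{r,\omega_i}_{i,*}(s;s)$. Since $b_i''$ has modulus of continuity $\varrho_i$, one has $|b_i'(x+h)-b_i'(x)-b_i''(x)h|\leq |h|\varrho_i(|h|)$. Combined with the bound $|\Delta\Theta(s)|\leq C\|\eta\|_\infty$ from Proposition \ref{prop-theta}, this replaces the bracketed difference by $b_i''(\Theta^{r,\omega_i}_{i,*}(s;s))\Delta\Theta(s)$ up to an error of order $\|\eta\|_\infty \varrho_i(\|\eta\|_\infty)$. Then Proposition \ref{prop-1} substitutes $\Delta\Theta(s) = \la \eta, \Gamma^{r,\omega_i}_{i,*}(s)\ra + O(\|\eta\|_\infty^2)$, while Proposition \ref{prop-gamma} replaces $\la \tilde\eta, \Gamma^{r,\omega_i+\eta}_{i,*}(s)\ra$ by $\la \tilde\eta, \Gamma^{r,\omega_i}_{i,*}(s)\ra$ at the cost of a term of order $\|\tilde\eta\|_\infty\|\eta\|_\infty$ which is absorbed after being multiplied by the already-small Taylor factor. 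Collecting,
\begin{equation*}
\la \tilde\eta, \Delta\Gamma(t)\ra = \int_r^t b_i'(\Theta^{r,\omega_i}_{i,*}(s;s))\la \tilde\eta, \Delta\Gamma(s)\ra \md s + \int_r^t b_i''(\Theta^{r,\omega_i}_{i,*}(s;s)) \la \eta, \Gamma^{r,\omega_i}_{i,*}(s)\ra \la \tilde\eta, \Gamma^{r,\omega_i}_{i,*}(s)\ra \md s + E(t),
\end{equation*}
with $|E(t)|\leq C\|\tilde\eta\|_\infty\|\eta\|_\infty\bigl(\varrho_i(\|\eta\|_\infty)+\|\eta\|_\infty\bigr)$; on a bounded domain we may (and will) absorb $\|\eta\|_\infty$ into $\varrho_i(\|\eta\|_\infty)$ by replacing $\varrho_i$ with $\varrho_i(\cdot)+(\cdot)$.

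Finally, I will pair the ODE \eqref{eqn-xi-ode} for $\Xi^{r,\omega_i}_{i,*}$ with $(\eta,\tilde\eta)$ to obtain precisely the same integral identity but without the remainder $E(t)$. Subtracting shows that $R(t) := \la \tilde\eta, \Delta\Gamma(t)\ra - \la (\eta,\tilde\eta), \Xi^{r,\omega_i}_{i,*}(t)\ra$ satisfies a linear Volterra equation with bounded kernel $b_i'(\Theta^{r,\omega_i}_{i,*}(s;s))$ and forcing bounded by $|E(t)|$. Gronwall's inequality then delivers the claimed estimate. The only delicate step is the bookkeeping between the $\|\eta\|_\infty^2$ remainder from Proposition \ref{prop-1} and the $\varrho_i(\|\eta\|_\infty)$ remainder from the Taylor expansion of $b_i'$; absorbing the former into the latter requires the harmless convention above on $\varrho_i$, but otherwise the argument is a direct second-order variation computation and presents no essential obstacle.
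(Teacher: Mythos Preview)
Your proposal is correct and follows essentially the same approach as the paper: subtract the ODEs for $\Gamma^{r,\omega_i+\eta}_{i,*}$, $\Gamma^{r,\omega_i}_{i,*}$, and $\Xi^{r,\omega_i}_{i,*}$, Taylor-expand $b_i'$ using the modulus of continuity of $b_i''$, invoke Propositions~\ref{prop-theta}--\ref{prop-1}, and close with Gronwall. The only cosmetic difference is the telescoping: the paper groups the cross term as $(b_i'(\Theta^{+\eta})-b_i'(\Theta))\Delta\Gamma$ with $\la\tilde\eta,\Gamma\ra$ in the Taylor remainder, whereas you keep $\la\tilde\eta,\Gamma^{+\eta}\ra$ and then swap it for $\la\tilde\eta,\Gamma\ra$ via Proposition~\ref{prop-gamma}; both yield the same error bounds. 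Your explicit remark about absorbing the $\|\eta\|_\infty^2$ contributions into $\varrho_i(\|\eta\|_\infty)$ by the convention $\varrho_i\leftarrow\varrho_i(\cdot)+(\cdot)$ is in fact a point the paper glosses over, so you are being slightly more careful there.
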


\begin{proof}
	Write \(\Delta \Gamma(t)=\la \tilde{\eta}, \Gamma_{i,*}^{r,\omega_{i}+\eta}(t)-  \Gamma_{i,*}^{r,\omega_{i}}(t)\ra \) and  \(R_{2}(t)= \Delta \Gamma(t)-\la (\eta,\tilde{\eta}), \Xi_{i,*}^{r,\omega_{i}}(t)\ra \).
	Plugging \eqref{eqn-gamma-ode} and \eqref{eqn-xi-ode}, we notice that
	\begin{align*}
		R_{2}(t)= & \int_{r}^{t}b_{i}'(\Theta_{i,*}^{r,\omega_{i}}(s;s))R_{2}(s)\md s                                                                                                                                                                                               \\
		          & +  \int_{r}^{t}(b_{i}'(\Theta_{i,*}^{r,\omega_{i}+\eta}(s;s))-b_{i}'(\Theta_{i,*}^{r,\omega_{i}}(s;s))) \Delta \Gamma(s)\md s                                                                                                                                   \\
		          & +\int_{r}^{t} \lb[b_{i}'(\Theta_{i,*}^{r,\omega_{i}+\eta}(s;s))-b_{i}'(\Theta_{i,*}^{r,\omega_{i}}(s;s))-  b_{i}''(\Theta_{i,*}^{r,\omega_{i}}(s;s))\la \eta , \Gamma_{i,*}^{r,\omega_{i}}(s)\ra\rb] \la \tilde{\eta}, \Gamma_{i,*}^{r,\omega_{i}}(s)\ra \md s.
	\end{align*}
	By Gronwall inequality, we deduce
	\begin{align*}
		\sup_{t\in[r,T]}|R_{2}(t)| & \lesssim    \int_{r}^{T}|b_{i}'(\Theta_{i,*}^{r,\omega_{i}+\eta}(s;s))-b_{i}'(\Theta_{i,*}^{r,\omega_{i}}(s;s))||\Delta \Gamma(s)| \md s                                                                                                                   \\
		                           & + \int_{r}^{T} |b_{i}'(\Theta_{i,*}^{r,\omega_{i}+\eta}(s;s))-b_{i}'(\Theta_{i,*}^{r,\omega_{i}}(s;s))- b_{i}''(\Theta_{i,*}^{r,\omega_{i}}(s;s))\la \eta , \Gamma_{i,*}^{r,\omega_{i}}(s)\ra|| \la \tilde{\eta}, \Gamma_{i,*}^{r,\omega_{i}}(s)\ra |\md s \\
		                           & := I_{1}+I_{2}.
	\end{align*}
	By Proposition \ref{prop-theta} and Proposition \ref{prop-gamma},  we notice
	\(
	I_{1}\lesssim \|\tilde{\eta}\|_{\infty}\|\eta\|_{\infty}^{2}.
	\)
	For \(I_{2}\),  we  plug in the estimates from Proposition \ref{prop-1} and  obtain
	\begin{align*}
		I_{2} & \lesssim \|\tilde{\eta}\|_{\infty} \sup_{t\in [r,T]} |R_{1}(t)|+  \|\tilde{\eta}\|_{\infty}\int_{r}^{t} |b_{i}'(\Theta_{i,*}^{r,\omega_{i}+\eta}(s;s))-b_{i}'(\Theta_{i,*}^{r,\omega_{i}}(s;s))- b_{i}''(\Theta_{i,*}^{r,\omega_{i}}(s;s))\Delta \Theta(s)|\md s \\
		      & \lesssim   \|\tilde{\eta}\|_{\infty}\|\eta\|_{\infty}^{2} +\|\tilde{\eta}\|_{\infty} \int_{r}^{T} \int_{0}^{1}|b_{i}''(\Theta_{i,*}^{r,\omega_{i}}(s;s) +\lambda \Delta \Theta(s))-b_{i}''(\Theta_{i,*}^{r,\omega_{i}})||\Delta \Theta (s)|\md \lambda\md s.
	\end{align*}
	Notice that \(b_{i}''\) is bounded with a module of continuity \(\varrho_{i}\),and  \(\sup_{t\in[r,T]}|\Delta\Theta(t)|\leq C \|\eta\|_{\infty}\).
	By Lebesgue dominated convergence theorem, we show that \(\sup_{t\in[r,T]}R_{2}(t)\leq C\|\tilde{\eta}\|_{\infty}\|\eta\|_{\infty} \varrho_{i}(\|\eta\|_{\infty})\).
\end{proof}

Let \(c\in C^{3}(\bbR^{2};\bbR)\) be a general cost with derivatives growing at most linearly.
We consider
\begin{equation*}
	u(r,\omega_{1}, \omega_{2}):= \E\lb[\int_{r}^{T}  c(\Theta_{1,*}^{r,\omega_{1}}(t;t),\Theta_{2,*}^{r,\omega_{2}}(t;t))\md t \rb].
\end{equation*}
\begin{rmk}
	\label{rmk-c}
	For example, we can take \(c(x,y)=|f_{1}(x)-f_{2}(y)|^{2}\), where \(f_{i}\) has  bounded first, second, and third derivatives.
\end{rmk}
\begin{prop}
	\label{prop-frechet}
	Under Assumptions \ref{asmp-fsde} and \ref{asmp-k}, we have
	\(u\) is twice  Fr\'echet differentiable and weakly continuous.
	In particular, for \(i,j=1,2\),
	\begin{align}
		\partial_{\omega_{i}}u(r,\omega_{1},\omega_{2})               & =\E\lb[\int_{r}^{T}\partial_{i}c(\Theta_{1,*}^{r,\omega_{1}   }(t;t),\Theta_{2,*}^{r,\omega_{2}}(t;t))\Gamma_{i,*}^{r,\omega_{i}}(t)\md t\rb], \label{eqn-u1}                                     \\
		\partial^{2}_{\omega_{i}\omega_{j}}u(r,\omega_{1},\omega_{2}) & =\E\lb[\int_{r}^{T} \partial^{2}_{ij}c(\Theta_{1,*}^{r,\omega_{1}}(t;t),\Theta_{2,*}^{r,\omega_{2}}(t;t) )\Gamma_{i,*}^{r,\omega_{i}}(t)\otimes \Gamma_{j,*}^{r,\omega_{j}}(t)\md t\rb] \nonumber \\
		                                                              & \quad +\delta_{i,j}\E\lb[\int_{r}^{T}\partial_{i}c(\Theta_{1,*}^{r,\omega_{1}   }(t;t),\Theta_{2,*}^{r,\omega_{2}}(t;t))\Xi_{i,*}^{r,\omega_{i}}(t)\md t\rb],\label{eqn-u2}
	\end{align}
	where \(\delta_{i,j}\) is the Kronecker symbol.
\end{prop}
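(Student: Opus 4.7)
The plan is to derive the Fréchet derivative formulas by combining a Taylor expansion of $c$ with the variational estimates established in Propositions \ref{prop-1} and \ref{prop-2}, and then obtain weak continuity by dominated convergence.

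For the first-order derivative, I would fix $\omega=(\omega_1,\omega_2)$, perturb $\omega_i$ by $\eta$, and write
\begin{align*}
c(\Theta_{1,*}^{r,\omega_1+\eta\mathbf{1}_{i=1}}(t;t),\Theta_{2,*}^{r,\omega_2+\eta\mathbf{1}_{i=2}}(t;t)) - c(\Theta_{1,*}^{r,\omega_1}(t;t),\Theta_{2,*}^{r,\omega_2}(t;t)) \\
= \partial_i c\bigl(\Theta_{1,*}^{r,\omega_1}(t;t),\Theta_{2,*}^{r,\omega_2}(t;t)\bigr)\bigl[\Theta_{i,*}^{r,\omega_i+\eta}(t;t)-\Theta_{i,*}^{r,\omega_i}(t;t)\bigr] + R(t),
\end{align*}
where $R(t)$ is controlled by $\|\partial_{ii}^2 c\|_\infty \cdot |\Theta_{i,*}^{r,\omega_i+\eta}(t;t)-\Theta_{i,*}^{r,\omega_i}(t;t)|^2 = O(\|\eta\|_\infty^2)$ via Proposition \ref{prop-theta}. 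Applying Proposition \ref{prop-1} to substitute $\Theta_{i,*}^{r,\omega_i+\eta}(t;t)-\Theta_{i,*}^{r,\omega_i}(t;t) = \langle\eta,\Gamma_{i,*}^{r,\omega_i}(t)\rangle + O(\|\eta\|_\infty^2)$, taking expectation, integrating over $[r,T]$, and using the boundedness of $\partial_i c$ along linear-growth controls on $\Theta$, yields \eqref{eqn-u1} with an $o(\|\eta\|_\infty)$ remainder.

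For the second derivative, I would differentiate the formula \eqref{eqn-u1} by the same procedure. For the cross-derivative $\partial^2_{\omega_1\omega_2}u$, only $\partial_1 c$ varies with $\omega_2$ while $\Gamma_{1,*}^{r,\omega_1}$ does not, so a single Taylor expansion in the second argument of $\partial_1 c$ combined with Proposition \ref{prop-1} produces the $\partial_{12}^2 c \cdot \Gamma_1\otimes\Gamma_2$ term of \eqref{eqn-u2} without any $\Xi$ contribution. For the diagonal $\partial^2_{\omega_i\omega_i}u$, perturbing $\omega_i$ by $\eta$ in \eqref{eqn-u1} produces two contributions: a Taylor term $\partial^2_{ii}c \cdot \langle\eta,\Gamma_{i,*}^{r,\omega_i}\rangle \cdot \langle\tilde\eta,\Gamma_{i,*}^{r,\omega_i}\rangle$, plus a variation of $\Gamma_{i,*}$ itself, which by Proposition \ref{prop-2} equals $\langle(\eta,\tilde\eta),\Xi_{i,*}^{r,\omega_i}\rangle$ up to $o(\|\eta\|_\infty\|\tilde\eta\|_\infty)$. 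Combining gives the Kronecker-$\delta$ structure in \eqref{eqn-u2}; crucially, the error in Proposition \ref{prop-2} involves $\varrho_i(\|\eta\|_\infty)\to 0$, which is exactly what is needed for Fréchet (not merely Gâteaux) differentiability.

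Finally, for weak continuity of $\partial_{\omega_i}u(r,\omega_1,\omega_2)$ and $\partial^2_{\omega_i\omega_j}u(r,\omega_1,\omega_2)$ as linear/bilinear functionals in $\eta$ (resp.\ $(\eta,\tilde\eta)$), the formulas \eqref{eqn-u1}--\eqref{eqn-u2} exhibit them as integrals of $\eta(t)$ against bounded random measures built from $\Gamma_{i,*}^{r,\omega_i}$ and $\Xi_{i,*}^{r,\omega_i}$; continuity in $(\omega_1,\omega_2)$ reduces to the continuous dependence already provided by Propositions \ref{prop-theta}--\ref{prop-2} together with Lebesgue dominated convergence, which applies because $\partial_i c$ has at most linear growth and $\sup_t\mathbb{E}|\Theta_{i,*}^{r,\omega_i}(t;t)|$ is bounded uniformly on compacts. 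The main obstacle I expect is bookkeeping the remainders: ensuring that after inserting the variational expansions and applying Taylor remainders for $c$, all error terms are genuinely $o(\|\eta\|_\infty)$ (first order) or $o(\|\eta\|_\infty\|\tilde\eta\|_\infty)$ (second order) \emph{uniformly in $t\in[r,T]$} before integrating, so that the differentiation can be pulled through the expectation and the time integral; the delicate point is that the step from Gâteaux to Fréchet differentiability of the second derivative depends critically on the uniform modulus $\varrho_i$ of $b_i''$, without which Proposition \ref{prop-2}'s $o(\|\eta\|)$-rate would degrade.
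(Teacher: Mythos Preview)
Your approach is essentially the same as the paper's: Taylor-expand $c$, insert the variational estimates from Propositions \ref{prop-theta}--\ref{prop-2}, and control remainders. The paper perturbs both $\omega_1$ and $\omega_2$ simultaneously to get the full Fr\'echet derivative in one step, whereas you treat the partials separately; this is a cosmetic difference.

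There is, however, one genuine slip in your bookkeeping. You bound the Taylor remainder by ``$\|\partial_{ii}^2 c\|_\infty \cdot |\Delta\Theta_i|^2$'' and later invoke ``the boundedness of $\partial_i c$''. But the standing hypothesis on $c$ is only that $c\in C^3$ with \emph{derivatives growing at most linearly}; neither $\partial_i c$ nor $\partial_{ij}^2 c$ is assumed bounded. The paper handles this by writing the Taylor remainder as
\[
\Bigl|c(\tilde\theta_1,\tilde\theta_2)-c(\theta_1,\theta_2)-\sum_{i}\partial_i c(\theta_1,\theta_2)(\tilde\theta_i-\theta_i)\Bigr|
\leq C\Bigl(1+\sum_i(|\tilde\theta_i|+|\theta_i|)\Bigr)\sum_i(\tilde\theta_i-\theta_i)^2,
\]
and then uses that the increment $|\Delta\Theta_i|\leq C\|\eta\|_\infty$ is a \emph{pathwise} bound (Proposition \ref{prop-theta}) while the prefactor is controlled in expectation by $\sup_t\E[|\Theta_{i,*}^{r,\omega_i}(t;t)|]\leq C(1+\|\omega_i\|_\infty)$. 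The same linear-growth correction is needed at second order (for $\partial_i c$ and $\partial_{ij}^2 c$), and is again absorbed by the first-moment bound on $\Theta$. Once you replace ``bounded'' by ``linear growth plus first-moment control of $\Theta$'' throughout, your argument coincides with the paper's.
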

\begin{proof}
	The  linear growth of \(\partial^{2}_{ij}c\) yields
	\begin{align*}
		\lb|c(\tilde{\theta}_{1},\tilde{\theta}_{2})-c(\theta_{1},\theta_{2})- \sum_{i=1,2}\partial_{i}c(\theta_{1},\theta_{2})(\tilde \theta_{i}-\theta_{i})\rb|\leq C (1+\sum_{i=1,2}(|\tilde{\theta}_{i}|+|\theta_{i}| ))\sum_{i=1,2}(\tilde{\theta}_{i}-\theta_{i})^{2}.
	\end{align*}
	Plugging \(\tilde{\theta}_{i}=\Theta_{i,*}^{r,\omega_{i}+\eta_{i}}(t;t)\) and \(\theta_{i}=\Theta_{i,*}^{r,\omega_{i}}(t;t)\) into the above estimates, and by Proposition \ref{prop-1}, we deduce
	\begin{align*}
		u(r,\omega_{1}+\eta_{1},\omega_{2}+\eta_{2})- u(r,\omega_{1},\omega_{2}) & =\sum_{i=1,2}\E\lb[\int_{r}^{T}\partial_{i}c(\Theta_{1,*}^{r,\omega_{1}   }(t;t),\Theta_{2,*}^{r,\omega_{2}}(t;t))\la \eta_{i},\Gamma_{i,*}^{r,\omega_{i}}(t)\ra\md t\rb] \\
		                                                                         & \qquad + o(\|\eta_{1}\|_{\infty}+\|\eta_{2}\|_{\infty}).
	\end{align*}
	Therefore, \(u\) is Fr\'echet differentiable, and \eqref{eqn-u1} is verified.
	To show \eqref{eqn-u2}, we only need to notice that \(\partial_{i}c,\partial^{3}_{ijk}c\) has a linear growth and \(\sup_{t\in[r,T]} \la \tilde{\eta}_{i}, \Gamma_{i,*}^{r,\omega_{i}}(t)\ra \leq C \|\tilde{\eta}_{i}\|_{\infty}\).
	By Proposition \ref{prop-2} and similar arguments as above, we deduce
	\begin{align*}
		 & \quad\la \tilde{\eta}, \partial_{\omega_{1}}u(r,\omega_{1}+\eta_{1},\omega_{2}+\eta_{2})-\partial_{\omega_{1}}u(r, \omega_{1},\omega_{2})\ra                                                                                       \\
		 & =  \E\lb[ \int_{r}^{T} \partial^{2}_{12}c(\Theta_{1,*}^{r,\omega_{1}}(t;t),\Theta_{2,*}^{r,\omega_{2}}(t;t))\la \tilde{\eta}, \Gamma_{1,*}^{r,\omega_{1}}(t)\ra\la \eta_{2}, \Gamma_{2,*}^{r,\omega_{2}}(t)\ra\md t\rb]            \\
		 & \quad +\E\lb[\int_{r}^{T} \partial^{2}_{11}c(\Theta_{1,*}^{r,\omega_{1}}(t;t),\Theta_{2,*}^{r,\omega_{2}}(t;t) )\la \tilde{\eta},\Gamma_{1,*}^{r,\omega_{i}}(t)\ra \la \eta_{1},\Gamma_{1,*}^{r,\omega_{i}}(t)\ra\md t\rb]         \\
		 & \quad +\E\lb[\int_{r}^{T}\partial_{1}c(\Theta_{1,*}^{r,\omega_{1}   }(t;t),\Theta_{2,*}^{r,\omega_{2}}(t;t))\la (\tilde{\eta},\eta_{1}),\Xi_{1,*}^{r,\omega_{1}}(t)\ra\md t\rb]+ o( \|\eta_{1}\|_{\infty}+ \|\eta_{2}\|_{\infty}).
	\end{align*}
	Therefore, \(u\) is twice  Fr\'echet differentiable and weakly continuous with derivatives given in \eqref{eqn-u1} and \eqref{eqn-u2}.
\end{proof}

\begin{proof}[Proof of Lemma \ref{lem-reg}]
	We first show that \(u\) satisfies all conditions in Lemma \ref{lem-ito}.
	We recall the regularity condition here.
	For  any  \(\eta , \tilde{\eta}\in C([0,T];\bbR)\), it holds that
	\begin{enumerate}[label=(\roman*)]
		\item for any \(\omega_{1},\omega_{2}\in C([0,T];\bbR)\), \begin{equation}
			      \label{eqn-reg1}
			      \begin{aligned}
				      |\la \eta, \partial_{\omega_{i}}u(r,\omega_{1},\omega_{2})\ra|                              & \leq C (1+\|\omega_{1}\|_{\infty}+\|\omega_{2}\|_{\infty}) \|\eta\|_{\infty}                           \\
				      |\la (\eta,\tilde{\eta}), \partial^{2}_{\omega_{i}\omega_{j}}u(r,\omega_{1},\omega_{2})\ra| & \leq C (1+\|\omega_{1}\|_{\infty}+\|\omega_{2}\|_{\infty}) \|\eta\|_{\infty}\|\tilde{\eta}\|_{\infty};
			      \end{aligned}
		      \end{equation}
		\item for any other \(\omega_{1}',\omega_{2}'\in C([0,T];\bbR)\), there exists a modulus of continuity \(\rho\) such that
		      \begin{equation}
			      \label{eqn-reg2}
			      \begin{aligned}
				       & |\la (\eta,\tilde{\eta}), \partial^{2}_{\omega_{i}\omega_{j}}u(r,\omega_{1},\omega_{2})-\partial^{2}_{\omega_{i}\omega_{j}}u(r,\omega_{1}',\omega_{2}')\ra|                                     \\
				       & \hspace{3cm}\leq C (1+\|\omega_{1}\|_{\infty}+\|\omega_{2}\|_{\infty}) \|\eta\|_{\infty}\|\tilde{\eta}\|_{\infty}\rho(\|\omega_{1}-\omega_{1}'\|_{\infty}+\|\omega_{2}-\omega_{2}'\|_{\infty}).
			      \end{aligned}
		      \end{equation}
	\end{enumerate}
	We first verify \eqref{eqn-reg1}.
	By Propositions \ref{prop-theta} and \ref{prop-gamma}, we have
	\begin{equation*}
		\sup_{t\in[r,T]}\E[|\Theta_{i,*}^{r,\omega_{i}}(t;t)|]\leq C(1+\|\omega_{i}\|_{\infty}) \text{ and }|\la \eta,\Gamma_{i,*}^{r,\omega_{i}}(t)\ra| \leq C\|\eta\|_{\infty}.
	\end{equation*}
	Plugging the above into \eqref{eqn-u1}, we derive
	\begin{align*}
		|\la \eta, \partial_{\omega_{i}}u(r,\omega_{1},\omega_{2})\ra| & \leq C \|\eta\|_{\infty} \E\lb[1+ |\Theta_{1,*}^{r,\omega_{1}   }(t;t)|+|\Theta_{2,*}^{r,\omega_{2}}(t;t)|\rb] \\
		                                                               & \leq C (1+\|\omega_{1}\|_{\infty}+\|\omega_{2}\|_{\infty}) \|\eta\|_{\infty}.
	\end{align*}
	For the second derivative, we notice
	\begin{align*}
		|\la(\eta,\tilde{\eta}), \Gamma_{i,*}^{r,\omega_{i}}(t)\otimes \Gamma_{j,*}^{r,\omega_{j}}(t)\ra|=|\la \eta,\Gamma_{i,*}^{r,\omega_{i}}(t)\ra\la \tilde{\eta},\Gamma_{j,*}^{r,\omega_{j}}(t) \ra|\leq C \|\eta\|_{\infty}\|\tilde{\eta}\|_{\infty}.
	\end{align*}
	Moreover, from \eqref{eqn-xi} and the boundedness of \(b_{i}',b_{i}''\), we deduce \[|\la (\eta,\tilde{\eta}), \Xi_{i,*}^{r,\omega_{i}}(t)\ra|\leq  C \int_{r}^{t}|\la(\eta,\tilde{\eta}), \Gamma_{i,*}^{r,\omega_{i}}(s)\otimes \Gamma_{i,*}^{r,\omega_{i}}(s)\ra|\md s\leq C \|\eta\|_{\infty}\|\tilde{\eta}\|_{\infty}. \]
	Therefore, by Proposition \ref{prop-frechet} and the linear growth of \(\partial_{i}c,\partial^{2}_{ij}c\),  we derive
	\begin{align*}
		|\la (\eta,\tilde{\eta}), \partial^{2}_{\omega_{i}\omega_{j}}u(r,\omega_{1},\omega_{2})\ra| & \leq C (1+\|\omega_{1}\|_{\infty}+\|\omega_{2}\|_{\infty}) \|\eta\|_{\infty}\|\tilde{\eta}\|_{\infty}.
	\end{align*}

	Now, we start to verify \eqref{eqn-reg2}.
	Since \(\partial^{2}_{ij}c\) has a linear growth, we have
	\begin{align*}
		 & \quad\lb|\partial_{i}c(\Theta_{1,*}^{r,\omega_{1}   }(t;t),\Theta_{2,*}^{r,\omega_{2}}(t;t))-\partial_{i}c(\Theta_{1,*}^{r,\omega_{1}'   }(t;t),\Theta_{2,*}^{r,\omega_{2}'}(t;t))\rb| \\
		 & \leq  C(1+ |\Theta_{1,*}^{r,\omega_{1}   }(t;t)|+|\Theta_{2,*}^{r,\omega_{2}}(t;t)|+|\Theta_{1,*}^{r,\omega_{1}'   }(t;t)|+|\Theta_{2,*}^{r,\omega_{2}'}(t;t)|)                        \\
		 & \hspace{3cm}\times(|\Theta_{1,*}^{r,\omega_{1}   }(t;t)-\Theta_{1,*}^{r,\omega_{1}'}(t;t)|+|\Theta_{2,*}^{r,\omega_{2}   }(t;t)-\Theta_{2,*}^{r,\omega_{2}'}(t;t)|)                    \\
		 & \leq C(1+ |\Theta_{1,*}^{r,\omega_{1}   }(t;t)|+|\Theta_{2,*}^{r,\omega_{2}}(t;t)|+ \|\omega_{1}-\omega_{1}'\|_{\infty}+\|\omega_{2}-\omega_{2}'\|_{\infty})                           \\
		 & \hspace{3cm} \times(\|\omega_{1}-\omega_{1}'\|_{\infty}+\|\omega_{2}-\omega_{2}'\|_{\infty}).
	\end{align*}
	Similarly, as \(\partial^{3}_{ijk}c\) has a linear growth, we have
	\begin{align*}
		 & \quad\lb|\partial^{2}_{ij}c(\Theta_{1,*}^{r,\omega_{1}   }(t;t),\Theta_{2,*}^{r,\omega_{2}}(t;t))-\partial^{2}_{ij}c(\Theta_{1,*}^{r,\omega_{1}'   }(t;t),\Theta_{2,*}^{r,\omega_{2}'}(t;t))\rb| \\
		 & \leq C(1+ |\Theta_{1,*}^{r,\omega_{1}   }(t;t)|+|\Theta_{2,*}^{r,\omega_{2}}(t;t)|+ \|\omega_{1}-\omega_{1}'\|_{\infty}+\|\omega_{2}-\omega_{2}'\|_{\infty})                                     \\
		 & \hspace{3cm} \times(\|\omega_{1}-\omega_{1}'\|_{\infty}+\|\omega_{2}-\omega_{2}'\|_{\infty}).
	\end{align*}
	By Proposition \ref{prop-gamma}, we have
	\begin{align*}
		 & \quad    |\la (\eta,\tilde{\eta}), \Gamma_{i,*}^{r,\omega_{i}}(t)\otimes \Gamma_{j,*}^{r,\omega_{j}}(t)- \Gamma_{i,*}^{r,\omega'_{i}}(t)\otimes \Gamma_{j,*}^{r,\omega'_{j}}(t)\ra|                                                                                            \\
		 & \leq C |\la \eta, \Gamma_{i,*}^{r,\omega_{i}}(t) -\Gamma_{i,*}^{r,\omega'_{i}}(t)\ra \la \tilde{\eta},\Gamma_{j,*}^{r,\omega_{j}}(t)\ra|+ C |\la \eta, \Gamma_{i,*}^{r,\omega'_{i}}(t) \ra \la \tilde{\eta},\Gamma_{j,*}^{r,\omega_{j}}(t)-\Gamma_{j,*}^{r,\omega'_{j}}(t)\ra| \\
		 & \leq C\|\eta\|_{\infty}\|\tilde{\eta}\|_{\infty}(\|\omega_{i}-\omega'_{i}\|_{\infty}+\|\omega_{j}-\omega'_{j}\|_{\infty}).
	\end{align*}
	Plugging the above estimates into \eqref{eqn-xi}, we derive
	\begin{equation*}
		|\la (\eta,\tilde{\eta}), \Xi_{i,*}^{r,\omega_{i}}(t)-\Xi_{i,*}^{r,\omega'_{i}}(t)\ra|\leq C \|\eta\|_{\infty}\|\tilde{\eta}\|_{\infty}\varrho_{i}(\|\omega_{i}-\omega'_{i}\|_{\infty}),
	\end{equation*}
	where \(\varrho_{i}\) is the modulus of continuity of \(b_{i}''\).
	Combining the above estimates, we conclude \eqref{eqn-reg2}.

	Now, we show that \(\partial_{t}u\) exists and is continuous.
	By the Markov property of \((\Theta_{1,*}^{r,\omega_{1}},\Theta_{2,*}^{r,\omega_{2}})\), we have
	\begin{equation}
		\label{eqn-dpp}
		u(r,\omega_{1},\omega_{2})=\E\lb[\int_{r}^{r+\delta}c\lb(\Theta_{1,*}^{r,\omega_{1}}(t;t),\Theta_{2,*}^{r,\omega_{2}}(t;t)\rb) \md t + u(r+\delta, \Theta_{1,*}^{r,\omega_{1}}(r+\delta;\cdot),\Theta_{2,*}^{r,\omega_{2}}(r+\delta;\cdot))\rb].
	\end{equation}
	Since we have verified \eqref{eqn-reg1} and \eqref{eqn-reg2},  applying  It\^o formula we obtain \begin{align*}
		 & \quad u(r+\delta, \Theta_{1,*}^{r,\omega_{1}}(r+\delta;\cdot),\Theta_{2,*}^{r,\omega_{2}}(r+\delta;\cdot))-u(r+\delta,\omega_{1},\omega_{2})                                                                                                                                                            \\
		 & = \int_{r}^{r+\delta} (\cL_{1}+\cL_{2})u(s,\Theta_{1,*}^{r,\omega_{1}}(s;\cdot),\Theta_{2,*}^{r,\omega_{2}}(s;\cdot))+\la (k_{1}(\cdot,s),k_{2}(\cdot,s)), \partial_{\omega_{1}\omega_{2}}^{2}u(s, \Theta_{1,*}^{r,\omega_{1}}(s;\cdot),\Theta_{2,*}^{r,\omega_{2}}(s;\cdot))\ra \md s                  \\
		 & \quad + \int_{r}^{r+\delta}\lb[\la k_{1}(\cdot,s),\partial_{\omega_{1}}u(s, \Theta_{1,*}^{r,\omega_{1}}(s;\cdot),\Theta_{2,*}^{r,\omega_{2}}(s;\cdot))\ra +\la k_{2}(\cdot,s),\partial_{\omega_{2}}u(s, \Theta_{1,*}^{r,\omega_{1}}(s;\cdot),\Theta_{2,*}^{r,\omega_{2}}(s;\cdot))\ra \rb]\md B_{1}(s).
	\end{align*}
	Plug the above identity into \eqref{eqn-dpp} and divide both sides by \(\delta\).
	Let \(\delta\) go to 0, and we deduce \(u\) satisfies
	\begin{equation}
		(\partial_{t}+\cL_{1}+\cL_{2})u(r,\omega_{1},\omega_{2})+\la (k_{1}(\cdot,r),k_{2}(\cdot,r)), \partial^{2}_{\omega_{1}\omega_{2}}u(r,\omega_{1},\omega_{2})(\cdot)\ra = -c(\omega_{1}(r),\omega_{2}(r)).
	\end{equation}
	This gives the continuity of \(\partial_{t}u\).
	We conclude the proof by noticing \(u=V_{*}\) if we take \(c(x,y)=|x-y|^{2}\).
\end{proof}

\section*{Acknowledgement}
The authors would like to thank Professor Jan Ob\l\'{o}j, Professor Rama Cont, Professor Julio Backhoff, and Vlad Tuchilus for their insightful comments and suggestions.
Y. Jiang acknowledges the support by the EPSRC Centre for Doctoral Training in Mathematics of Random Systems: Analysis, Modelling, and Simulation (EP/S023925/1).

\bibliography{mybib}
\end{document}